\renewcommand{\top}{\mathsf{T}}
\newcommand{\ctop}{\mathsf{H}}
\newtheorem{lemma}{Lemma}
\newtheorem{theorem}[lemma]{Theorem}
\newtheorem{proposition}[lemma]{Proposition}
\theoremstyle{remark}
\newtheorem{remark}[lemma]{Remark}
\theoremstyle{definition}
\newtheorem{definition}[lemma]{Definition}
\DeclareMathOperator{\vecop}{vec}
\DeclareMathOperator{\re}{re}
\DeclareMathOperator{\im}{im}
\newcommand{\C}{\mathbb C}
\newcommand{\R}{\mathbb R}
\newcommand{\N}{\mathbb N}
\newcommand{\cS}{{\cal S}}
\newcommand{\mS}{{\mathbb S}}
\newcommand{\cQ}{{\cal Q}}
\newcommand{\cI}{{\cal I}}
\newcommand{\la}{{\lambda}}
\newcommand{\ci}{{\mathfrak i}}
\newcommand{\wt}{\widetilde}
\newcommand{\wh}{\widehat}
\DeclareMathOperator{\fl}{\mathsf{fl}}
\DeclarePairedDelimiter{\norm}{\lVert}{\rVert}
\renewcommand{\tilde}{\widetilde}
\title{Nonsingular systems of generalized Sylvester equations: an algorithmic approach\thanks{
    \emph{2010 Mathematics Subject Classification.} Primary 15A22, 15A24, 65F15. 
    This work was partially
supported by the Ministerio de Econom\'{i}a y Competitividad of Spain
through grants MTM2015-68805-REDT, and MTM2015-65798-P  (F. De Ter\'an), by an INdAM/GNCS Research Project 2016 (B.~Iannazzo, F.~Poloni, and L.~Robol), and by the Research project of the Universit\`a di Perugia ``Soluzione numerica di problemi di algebra lineare strutturata'' (B.~Iannazzo). Part of this work was done during a visit of the first author to the Universit\`a di Perugia as a Visiting Researcher.}}
\author[1]{Fernando De Ter\'an}
\author[2]{ Bruno Iannazzo}
\author[3]{Federico Poloni}
\author[4,5]{Leonardo Robol}
\affil[1]{Departamento de Matem\'aticas, Universidad Carlos III de Madrid, Avda. Universidad 30, 28911 Legan\'es,
Spain. {\tt fteran@math.uc3m.es}.}
\affil[2]{Dipartimento di Matematica e Informatica, Universit\`a di Perugia, Via Vanvitelli 1, 06123 Perugia, Italy. {\tt bruno.iannazzo@dmi.unipg.it}.}
\affil[3]{Dipartimento di Informatica, Universit\`a di Pisa, Largo B. Pontecorvo 3, 56127 Pisa, Italy. {\tt federico.poloni@unipi.it}.}
\affil[4]{Dipartimento di Matematica, Universit\`a di Pisa, Largo B. Pontecorvo 5, 56127 Pisa, Italy. {\tt leonardo.robol@unipi.it}.}
\affil[5]{Institute of Information Science and Technologies ``A. Faedo'', ISTI-CNR, Via G. Moruzzi, 1, 56124 Pisa, Italy.
}
\begin{document}
	
	\date{}
	\maketitle  
	
	\begin{abstract}
		We consider the uniqueness of solution (i.e., nonsingularity) of
		systems of $r$ generalized Sylvester and $\star$-Sylvester
		equations with $n\times n$ coefficients. After several reductions, we show that it is sufficient to analyze periodic systems having, at most, one
		generalized $\star$-Sylvester equation. We provide characterizations for the nonsingularity in terms of spectral properties of
		either matrix pencils or formal matrix products, both
		constructed from the coefficients of the system. The proposed
		approach uses the periodic Schur decomposition, and leads to a backward stable 
		$O(n^3r)$ algorithm for computing the (unique) solution. \\[12pt]
                \textbf{Keywords:} Sylvester and $\star$-Sylvester equations, systems of linear matrix equations,  matrix pencils,  periodic Schur decomposition, periodic QR/QZ algorithm, formal matrix product
	\end{abstract}

	\section{Introduction}\label{sec:intro}

The {\em generalized Sylvester} equation
\begin{equation}\label{eq:gensylv}
	 AXB-CXD=E,
\end{equation}
goes back to, at least, the early 20th century \cite{wedder04}. Here the unknown $X$, the coefficients $A,B,C,D$, and the right-hand side $E$ are complex matrices of appropriate size. This equation has attracted much attention since the 1970s, mainly due to its appearance in applied problems (see, for instance, \cite{mitra,kosir96,kmnt14,chu87,simoncini16}). 

Another related equation, whose interest is growing recently (see, for instance, \cite{ccl12,dd11,ddgmr,brunofernando,rectangular,dgks}), arises when introducing the $\star$ operator in the second appearance of the unknown. This equation is the {\em generalized $\star$-Sylvester equation}
\begin{equation}\label{eq:genstarsylv}
	AXB-CX^{\star}D=E,
\end{equation}
where the unknown $X$, the coefficients $A,B,C,D$, and the right-hand side $E$ are again complex matrices of appropriate size, and $\star$ can be either the transpose ($\top$) or the conjugate transpose ($\ctop$) operator. When $\star=\top$, the equation can be seen as a linear system in the entries of the unknown $X$, while if $\star=\ctop$, the equation is no more linear in the entries of $X$ because of conjugation. Nevertheless, with the usual isomorphism $\C\cong\R^2$, obtained by splitting the real and imaginary parts, it turns out to be a linear system with respect to the real entries of $\re(X)$ and $\im(X)$.

One could argue that, in some sense, solving generalized Sylvester and
$\star$-Sylvester equations is an elementary problem both from the
theoretical and the computational point of view, since they are
equivalent to linear systems. Nevertheless, there has been great
interest in giving conditions on the existence and uniqueness of
solutions based just on properties of certain small-sized matrix
pencils constructed from the coefficients. For instance, when all coefficients are square, it is known
that \eqref{eq:gensylv} has a unique solution if and only if the two
pencils $A-\la C$ and $D-\la B$ have disjoint spectra
\cite[Th. 1]{chu87}, whereas the uniqueness of solutions of
\eqref{eq:genstarsylv} depends on spectral properties of the matrix
pencil
$\left[\begin{smallmatrix}\la D^\star&B^\star\\A&-\la
    C\end{smallmatrix}\right]$ (see \cite[Th. 15]{brunofernando}).

On the other hand, if all coefficients are square and of size $n$, then the resulting linear system has size $n^2$ or $2n^2$. {From} the computational point of view, solving a linear system of size $n^2$ with standard (non-structured) algorithms may be prohibitive, since they result in a method which approximates the solution in $O(n^6)$ (floating point) arithmetic operations (flops). However, dealing with the coefficients it is possible to get algorithms requiring only $O(n^3)$ flops, such as the one given in \cite{chu87}.

Recently, systems of coupled generalized Sylvester and $\star$-Sylvester equations have been considered, and useful conditions on the existence of solutions have been derived in \cite{dk16}. Here, we consider the same kind of systems and provide further characterizations for the uniqueness of their solution, for any right-hand side, based on certain spectral conditions on their coefficients. It is worth to emphasize that, while in \cite{dk16} non-square coefficients are allowed, as long as the matrix products are well-defined, here we assume that all coefficients, as well as the unknowns, are square of size $n\times n$. This choice has been made because the problem of nonsingularity, even for just one equation, presents certain additional subtleties when the coefficients are not square or they are square with different sizes (see~\cite{rectangular}). In the assumption that all coefficients are square and of size $n\times n$, such a system of matrix equations is equivalent to a square linear system, which has a unique solution, for any right-hand side, if and only if the coefficient matrix is nonsingular. For this reason, we will use the term \emph{nonsingular system} as a synonym of a system having a unique solution (for any right-hand side).

The {\em systems of generalized Sylvester and $\star$-Sylvester equations} that we consider are of the form
\begin{equation}\label{eq:gensystem}
\begin{array}{l}
A_kX_{\alpha_k}^{s_k}B_k-C_kX_{\beta_k}^{t_k}D_k=E_k,\qquad k=1,\ldots,r,
\end{array}
\end{equation}
where all matrices involved are complex and of size $n\times n$, the indices $\alpha_i,\beta_i$ of the unknowns are positive integers and can be equal or different to each other, and $s_i,t_i\in\{1,\star\}$.

Our approach starts by reducing the problem on the nonsingularity of \eqref{eq:gensystem} to the special case of {\em periodic} systems of the form
\begin{equation}\label{eq:periodic}
\left\{
\begin{array}{cccc}
A_kX_kB_k-C_kX_{k+1}D_k&=&E_k,& k=1,\ldots,r-1,\\
A_rX_rB_r-C_rX_1^sD_r&=&E_r,
\end{array}
\right.
\end{equation}
where $s\in\{1,\star\}$. We  provide an explicit characterization of nonsingularity only for periodic systems like \eqref{eq:periodic}. However, our reduction allows one to get a characterization for any system like \eqref{eq:gensystem} after undoing all changes that take the system \eqref{eq:gensystem} into \eqref{eq:periodic}. Since these systems can be seen as linear systems with a square matrix coefficient, the criteria for nonsingularity do not depend on the right-hand sides $E_k$, but only on the coefficients $A_k,B_k,C_k,D_k$, for $k=1,\dots,r$.

Periodic systems of Sylvester equations naturally arise in the context of discrete-time periodic systems, and they have been analyzed by several authors (see, for instance, \cite{agjk08,gk06,gkg07,vvd01}). Prior to our work, Byers and Rhee provided in the unpublished work \cite{byers-rhee} a characterization for the nonsingularity of \eqref{eq:periodic} with $s=1$, together with an $O(n^3r)$ algorithm to compute the solution. 

The first contribution of the present work is the reduction of a nonsingular system of Sylvester and $\star$-Sylvester equations \eqref{eq:gensystem} to several disjoint systems of periodic type \eqref{eq:periodic}, where all equations are generalized Sylvester, with the exception of the last one that may be either a generalized Sylvester or a generalized $\star$-Sylvester equation. We note that neither the coefficients, nor the number of equations in the original and the reduced system necessarily coincide.

As a second contribution, we provide a characterization for the nonsingularity of \eqref{eq:periodic} for $s=\ctop,\top$ (i.\@ e., $s=\star$, according to our notation). This characterization appears in two different formulations. The first one is given in terms of the spectrum of {\em formal products} constructed from the coefficients of the system (we include the case $s=1$, treated in Theorem \ref{thm:1eigentheorem}, and the case $s=\star$, treated in Theorem \ref{thm:stareigentheorem}). The second formulation, valid for $s=\star$, is given in terms of spectral properties of a block-partitioned  $(2rn)\times(2rn)$ matrix pencil constructed in an elementary way from the coefficients (Theorem \ref{thm:pencilcond}). This characterization extends the one in \cite{brunofernando} for the single equation \eqref{eq:genstarsylv}, and it is in the same spirit as the one in \cite{byers-rhee} for periodic systems with $s=1$.

The third contribution of the paper is to provide an $O(n^3r)$ algorithm to compute the unique solution of a nonsingular system. Our algorithm is a Bartels-Stewart like algorithm, based on the periodic Schur form~\cite{bojanczyk1992periodic}. It extends the one in \cite{byers-rhee} for systems of Sylvester equations only, the one in \cite{dd11} for the $\star$-Sylvester equation $AX+X^\star D=E$, and the one outlined in \cite[\S 4.2]{ccl12} for~\eqref{eq:genstarsylv}.

We note that extending the results of~\cite{byers-rhee} to include $\star$-Sylvester equations is not a trivial endeavour: the presence of transpositions creates additional dependencies between the data, hence we need a different strategy to reduce the coefficients to a triangular form, and the resulting criteria have a significantly different form.

Throughout the manuscript, $\ci$ denotes the imaginary unit, that is, $\ci^2=-1$. By $M^{-\star}$ we denote the inverse of the invertible matrix $M^\star$, with $\star=\ctop,\top$. A pencil $\cQ(\lambda)$ is {\em regular}
if it is square and $\det\cQ(\lambda)$ is not identically zero. We use the symbol $\Lambda({\cal Q})$ to denote the \emph{spectrum} of a regular matrix pencil $\cQ(\lambda)$, that is the set of values $\lambda$ such that $\cQ(\lambda)$ is singular (including $\infty$ if the degree of $\det\cQ(\lambda)$ is smaller than the size of the pencil). For simplicity, we use the term {\em system of  Sylvester-like equations} for a system of generalized Sylvester and $\star$-Sylvester equations.

The paper is organized as follows. In Section~\ref{sec:applications} we 
present some applications of systems of Sylvester and $\star$-Sylvester
equations; in Section~\ref{sec:prel} the periodic Schur decomposition and the concept of formal matrix product are recalled. Section~\ref{sec:main} hosts the main theoretical results of the paper, whose proofs are deferred to Section~\ref{sec:existenceuniqueness}, after Sections~\ref{sec:red1} and~\ref{sec:red2}, that are devoted to some successive simplifications of the problem which are useful for the proofs. Section \ref{sec:algorithm} is devoted to describe and analyze an efficient algorithm for the solution of systems of Sylvester-like equations. Finally, in Section~\ref{sec:conc} we draw some conclusions.


\section{Applications} \label{sec:applications}

Sylvester-like equations appear in various fields
of applied mathematics. In some cases, the applications have natural
``periodic extensions'', where systems of these equations come into
play. 

As an example, consider a $2 \times 2$ 
block upper triangular matrix $M=\left[\begin{smallmatrix}
    A & C \\
     0 & B \\
  \end{smallmatrix}\right]$, 
and assume that we want to block diagonalize it, setting $C$ to zero
with a similarity transformation. This problem arises, for instance, when $M$ is the block Schur form of a given matrix and we want to decouple the action of the parts 
of the spectrum contained 
 in $A$ and $B$. Then, we can look for a
matrix $V$ such that 
\begin{equation}\label{vmatrix}
  V^{-1} M V = \begin{bmatrix}
   A &0\\0 & B
  \end{bmatrix}, \qquad 
  V = \begin{bmatrix}
   I & X \\
    0 & I \\
  \end{bmatrix}.
\end{equation}
This problem can be solved by finding a solution to the Sylvester equation $AX - XB + C = 0$, and admits a natural extension in periodic form, when we want to block 
diagonalize the product of $2\times 2$ block upper triangular matrices, as the one arising in a periodic Schur form. We start from
\begin{equation}\label{eq:rev1}
  M = M_1 \cdots M_r, \qquad 
  M_i := \begin{bmatrix}
    A_i & C_i \\ 
    0& B_i \\
  \end{bmatrix}, 
\end{equation}
where the blocks have the same size for each $i$, and we want to block diagonalize $M$. For stability reasons, rather than working directly on the product $M$, 
it is often preferable to look for matrices $V_i$ such that $V_i^{-1} M_i V_{i+1}$
are all block diagonal, with $V_{r+1} = V_1$ (see, e.g. \cite{watkins2005product}). 
If we impose on $V_i=\left[\begin{smallmatrix}I&X_i\\0 &I\end{smallmatrix}\right]$ the same block upper triangular structure
we had for $V$ in \eqref{vmatrix}, then we obtain the periodic system
of Sylvester equations $A_i X_{i+1} - X_i B_i + C_i = 0$, for $i=1,\ldots,r$, with $X_{r+1} = X_1$. 

Similarly, decoupling saddle-point matrices (as quadratic forms) given in product form
\[
  N = N_1 N_2 N_3 = \begin{bmatrix}
    A_1 & 0   \\
    C_1 & B_1 \\
  \end{bmatrix} \begin{bmatrix}
    0 & A_2 \\
    B_2 & C_2 \\
  \end{bmatrix} \begin{bmatrix}
    B_3 & C_3 \\
    0   & A_3 \\
  \end{bmatrix} = \begin{bmatrix}
    0 & A_1 A_2 A_3 \\
    B_1 B_2 B_3 & C_1 A_2 A_3 + B_1 C_2 A_3 + B_1 B_2 C_3 \\
  \end{bmatrix}
\]
(see e.g.~\cite{ReeS18} for similar factorizations)
naturally leads to systems of $\star$-Sylvester equations:
 one can choose the following change of bases to eliminate the blocks $C_i$
\[
U_1^\star NU_1 = 
U_1^\star
\begin{bmatrix}
    A_1 & 0\\
    C_1 & B_1
\end{bmatrix}
V_2^{-1}
V_2
\begin{bmatrix}
    0 & A_2\\
    B_2 & C_2
\end{bmatrix}
V_3 V_3^{-1}
\begin{bmatrix}
    B_3 & C_3\\
    0 & A_3
\end{bmatrix}
U_1,
\]
\[
U_1 = \begin{bmatrix}
    I & X_1\\
    0 & I
\end{bmatrix}, \quad 
V_2 = \begin{bmatrix}
    I   & 0\\
    X_2 & I
\end{bmatrix}, \quad
V_3 = \begin{bmatrix}
    I & X_3\\
    0 & I
\end{bmatrix};
\]
then the factors become
\begin{align*}
U_1^\star
\begin{bmatrix}
    A_1 & 0\\
    C_1 & B_1
\end{bmatrix}
V_2^{-1} &=
\begin{bmatrix}
    A_1 & 0\\
    X_1^\star A_1 - B_1X_2+ C_1 & B_1
\end{bmatrix},\\
V_2
\begin{bmatrix}
    0 & A_2\\
    B_2 & C_2
\end{bmatrix}
V_3 &=
\begin{bmatrix}
    0 & A_2\\
    B_2 & X_2 A_2 + B_2 X_3 + C_2
\end{bmatrix},\\
V_3^{-1}
\begin{bmatrix}
    B_3 & C_3\\
    0 & A_3
\end{bmatrix}
U_1 &=
\begin{bmatrix}
    B_3 & -X_3A_3+ B_3X_1+ C_3\\
    0 & A_3
\end{bmatrix}.
\end{align*}
Hence the blocks in the position of the $C_i$ vanish if the $X_i$ solve the periodic system of $\star$-Sylvester equations
\[
   \left\{
   \begin{array}{l}
   X_1^\star A_1 - B_1X_2+ C_1 = 0,\\
   X_2A_2 + B_2 X_3 + C_2 = 0,\\
   -X_3A_3+ B_3X_1+ C_3 = 0.
   \end{array}
    \right.
\]

Another relevant application is the reordering of periodic Schur forms.
In order to swap the diagonal blocks of $M$ in \eqref{eq:rev1} it may be convenient to swap the blocks of the factors $M_i$, for $i=1,\ldots,r$. While the problem of swapping the blocks of $M$ can be reduced to a Sylvester equation \cite{baidem}, the problem of swapping the blocks of the factors can be reduced to a periodic system of Sylvester equations.
Indeed, swapping diagonal entries of matrices given in products form, without forming the product, is an essential step in the eigenvector recovery procedures
of some fast methods for matrix polynomial eigenvalue problems (see \cite{aurentz2016fast,aurentz2018corebook}).

	\section{Periodic Schur decomposition of formal matrix products}
	\label{sec:prel}

In order to state and prove the nonsingularity results for a system of Sylvester-like equations and to design an efficient algorithm to compute the solution, we need to introduce several results and definitions that extend the ideas of matrix pencils and generalized eigenvalues to products of matrices of an arbitrary number of factors. These are standard tools in the literature (see, for instance, \cite{gk06,gkg07}). 

	\begin{theorem}[Periodic Schur decomposition
		\cite{bojanczyk1992periodic}]\label{thm:periodicschur}
		Let $M_k, N_k$, for $k = 1, \ldots, r$, be two sequences of
		$n\times n$ complex matrices. Then there exist unitary matrices $Q_k, Z_k$,
		for $k = 1, \ldots, r$, such that
		\begin{equation} \label{eq:periodicschur}
			Q_{k}^\ctop M_{k} Z_{k} = T_k, \qquad 
			Q_{k}^\ctop N_{k} Z_{k+1} = R_k, \qquad 
			k = 1, \ldots, r    	
		\end{equation}
		where $T_k, R_k$ are upper triangular and 
		$Z_{r+1}=Z_1$. 
	\end{theorem}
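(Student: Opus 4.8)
\emph{Proof sketch.} The plan is to argue by induction on $n$, treating first the case in which all the $M_k$ and all the $N_k$ are invertible, and then to remove this assumption by a compactness argument.

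Assume first that every $M_k$ and every $N_k$ is invertible. The case $n=1$ is trivial (take $Q_k=Z_k=1$). For $n>1$ the heart of the matter is to find unit vectors $z_1,\dots,z_r,q_1,\dots,q_r\in\C^n$ such that, setting $z_{r+1}:=z_1$,
\[
 M_k z_k\in\operatorname{span}\{q_k\}\qquad\text{and}\qquad N_k z_{k+1}\in\operatorname{span}\{q_k\},\qquad k=1,\dots,r.
\]
To produce them, let $S_k:=N_k^{-1}M_k$ and $\Pi:=S_rS_{r-1}\cdots S_1$; since all factors are invertible, so is $\Pi$, and hence it has an eigenvalue $\mu\neq0$ with eigenvector $w$. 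Set $w_1:=w$ and $w_{k+1}:=S_kw_k$ for $k=1,\dots,r$; each $w_k$ is nonzero, for otherwise $\Pi w=0$, and $w_{r+1}=\Pi w=\mu w_1$. Normalize $z_k:=w_k/\norm{w_k}$ for $k=1,\dots,r$ and put $z_{r+1}:=z_1$. Then $N_kz_{k+1}$ is a nonzero multiple of $M_kz_k$ for every $k$ (for $k<r$ because $N_kw_{k+1}=M_kw_k$, and for $k=r$ because $M_rw_r=\mu N_rw_1$), so we may let $q_k$ be a unit vector spanning the common line $\operatorname{span}\{M_kz_k\}=\operatorname{span}\{N_kz_{k+1}\}$. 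Complete each $z_k$ and each $q_k$ to unitary matrices $Z_k=[\,z_k\ \widetilde Z_k\,]$ and $Q_k=[\,q_k\ \widetilde Q_k\,]$. By construction the first columns of $Q_k^\ctop M_kZ_k$ and of $Q_k^\ctop N_kZ_{k+1}$ are multiples of $e_1$, so both are block upper triangular with trailing $(n-1)\times(n-1)$ blocks $M_k^{(1)}$ and $N_k^{(1)}$; since their $(1,1)$ entries $q_k^\ctop M_kz_k$ and $q_k^\ctop N_kz_{k+1}$ are nonzero, a determinant identity shows that all the $M_k^{(1)}$ and $N_k^{(1)}$ are still invertible. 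Applying the inductive hypothesis to the sequences $M_k^{(1)},N_k^{(1)}$ and composing with the $Q_k,Z_k$ gives the decomposition.

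To handle arbitrary $M_k,N_k$, replace them by $M_k+\varepsilon I$ and $N_k+\varepsilon I$. For all but finitely many $\varepsilon\in\C$ these matrices are all invertible, and the previous step supplies unitary $Q_k(\varepsilon),Z_k(\varepsilon)$ with $Z_{r+1}(\varepsilon)=Z_1(\varepsilon)$ for which $Q_k(\varepsilon)^\ctop(M_k+\varepsilon I)Z_k(\varepsilon)$ and $Q_k(\varepsilon)^\ctop(N_k+\varepsilon I)Z_{k+1}(\varepsilon)$ are upper triangular. Letting $\varepsilon\to0$ along a sequence and using the compactness of the group of unitary $2r$-tuples, we may assume $Q_k(\varepsilon)\to Q_k$ and $Z_k(\varepsilon)\to Z_k$ with $Q_k,Z_k$ unitary and $Z_{r+1}=Z_1$. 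Since $\varepsilon\,Q_k(\varepsilon)^\ctop Z_k(\varepsilon)\to0$ and the upper triangular matrices form a closed set, the limits $Q_k^\ctop M_kZ_k$ and $Q_k^\ctop N_kZ_{k+1}$ are upper triangular, which is the assertion.

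The one genuinely delicate point is the possibility that some $M_k$ or $N_k$ is singular: when everything is invertible the cyclic product $\Pi$ hands us the common deflating direction at once, whereas in general $\Pi$ may fail to be defined or to have a nonzero eigenvalue. The perturbation-and-limit device circumvents this cleanly, and it works precisely because upper triangularity is a closed condition undisturbed by passing to limits.
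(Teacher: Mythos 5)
Your proof is correct. The paper does not actually prove Theorem~\ref{thm:periodicschur}; it is quoted from \cite{bojanczyk1992periodic}, which establishes the decomposition constructively via a periodic Hessenberg reduction followed by an implicitly shifted QZ-type iteration. Your argument is instead the direct, non-constructive existence proof in the style of the classical Schur and generalized-Schur theorems, transported to the periodic setting: for invertible factors, the cyclic product $\Pi=S_r\cdots S_1$ with $S_k=N_k^{-1}M_k$ supplies an eigenvector $w$ with nonzero eigenvalue; the chain $w_{k+1}=S_kw_k$ gives compatible deflating directions $z_k$; the relation $w_{r+1}=\mu w_1$ closes the cycle so that $z_{r+1}\parallel z_1$; the trailing $(n-1)\times(n-1)$ blocks remain invertible by the block-triangular determinant identity, so induction on $n$ applies. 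The perturbation $M_k\mapsto M_k+\varepsilon I$, $N_k\mapsto N_k+\varepsilon I$, combined with compactness of the set of $2r$-tuples of unitary matrices, removes the invertibility hypothesis, since upper triangularity is a closed condition and $\varepsilon\,Q_k(\varepsilon)^\ctop Z_k(\varepsilon)\to 0$. Each verification you sketch checks out, including the case $k=r$ where $M_rw_r=\mu N_rw_1$. What the reference's algorithmic proof buys is an actual $O(n^3r)$ floating-point procedure for computing the decomposition, which Section~\ref{sec:algorithm} of the paper depends on; but for the bare existence statement your route is shorter and entirely adequate.
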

	If the matrices $N_k$ are invertible, Theorem~\ref{thm:periodicschur} means that we can apply suitable unitary changes of bases to the product
	\begin{equation} \label{eq:formalproduct}
		\Pi = N_r^{-1}M_r N_{r-1}^{-1}M_{r-1}\dotsm N_1^{-1}M_1
	\end{equation}
	to make all its factors upper triangular simultaneously. More precisely,
	\[
Z_{1}^{-1}\Pi Z_1 = R_r^{-1}T_r R_{r-1}^{-1}T_{r-1}\dotsm R_1^{-1}T_1.
	\]
	In this case, the eigenvalues of $\Pi$ are
	\begin{equation} \label{eq:periodiceigenvalues}
		\lambda_i = \frac{ (T_1)_{ii}(T_2)_{ii}\cdots(T_r)_{ii}}{(R_1)_{ii}(R_2)_{ii}\cdots(R_r)_{ii}}, \quad i = 1,2,\dots,n.
	\end{equation}
	Even when some of the $N_k$ matrices are not invertible, we call the expression~\eqref{eq:formalproduct} a \emph{formal matrix product}, and~\eqref{eq:periodicschur} a {\em formal periodic Schur form} of the product. If $(T_1)_{ii}(T_{2})_{ii}\cdots(T_r)_{ii} = (R_1)_{ii}(R_2)_{ii}\cdots(R_r)_{ii} = 0$, for some $i\in\{1,2,\dots,n\}$, we call the formal product \emph{singular}; otherwise, we call it \emph{regular}. If $\Pi$ is regular, it makes sense to consider the ratios $\lambda_i$ defined in~\eqref{eq:periodiceigenvalues}, with the convention that $\frac{a}{0} = \infty$ for $a\neq 0$. We call these ratios the \emph{eigenvalues} of the regular formal matrix product $\Pi$. The set of eigenvalues of $\Pi$ is called, as usual, the {\em spectrum of $\Pi$}, and we denote it by $\Lambda(\Pi)$.

	We also define the eigenvalues of a formal matrix product of the form
	\[
		\widetilde \Pi = M_r N_{r-1}^{-1}M_{r-1}\dotsm N_1^{-1}M_1N_r^{-1}
	\]
	(i.\@ e., one in which the exponent $-1$ appears in the factors
        in \emph{even} positions) by the same
        formula~\eqref{eq:periodiceigenvalues}.

	\begin{remark}\label{continuity-rem}
	For the notion of eigenvalues of formal products to be well defined, one should prove that it does not depend on the choice of the (non-unique) decomposition~\eqref{eq:periodicschur}. If all $N_i$ matrices are nonsingular, then this is evident because they coincide with the eigenvalues obtained by performing the inversions and computing the actual product $\Pi$. If some of the $N_i$ are singular, then we can use a continuity argument to show that the $\lambda_i$ are the limits, as $\varepsilon\to 0$, of the eigenvalues of 
	\[
		(N_r+\varepsilon P_r)^{-1}M_r (N_{r-1}+\varepsilon P_{r-1})^{-1}M_{r-1}\dotsm (N_1+\varepsilon P_1)^{-1}M_1
	\]
	for each choice of the nonsingular matrices $P_1,P_2,\dots,P_r$ that make the factors $N_k+\varepsilon P_k$ invertible, for all $k=1,\hdots,r$ and sufficiently small $\varepsilon>0$.
	\end{remark}

	\begin{lemma}\label{lem:eigs}
		Let $\Pi = M_{1}^{-1} N_1  \cdots M_r^{-1} N_{r}$ 
		be a formal matrix product. Then, the matrix pencil
		\[
		  \mathcal Q(\lambda) := \begin{bmatrix}
		  \lambda M_1 & -N_1\\
		  & \lambda M_2 & \ddots\\
		  & & \ddots & -N_{r-1}\\
		  -N_r & & & \lambda M_r
		  \end{bmatrix}
		\]
		is regular if and only if $\Pi$ is regular. In this case,
		the eigenvalues of $\mathcal Q(\lambda)$
		are the $r$-th roots of the eigenvalues of $\Pi$, with
		the convention that $\sqrt[r]{\infty} = \infty$. 
	\end{lemma}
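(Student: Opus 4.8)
The plan is to bring $\mathcal{Q}(\lambda)$, by a unitary strict equivalence, to a pencil all of whose blocks are upper triangular, and then to compute its determinant by a permutation that decouples it into $n$ independent scalar pencils of size $r$. First I would apply the periodic Schur decomposition (Theorem~\ref{thm:periodicschur}) to the sequences $M_1,\dots,M_r$ and $N_1,\dots,N_r$, obtaining unitary $Q_k,Z_k$ with $Q_k^\ctop M_k Z_k = T_k$ and $Q_k^\ctop N_k Z_{k+1} = R_k$ upper triangular and $Z_{r+1}=Z_1$. Setting $\mathcal{U}=\diag(Q_1,\dots,Q_r)$ and $\mathcal{V}=\diag(Z_1,\dots,Z_r)$, which are unitary, a block-by-block check --- where the identity $Z_1=Z_{r+1}$ is precisely what handles the corner block $-N_r$ --- gives
\[
  \mathcal{U}^\ctop \mathcal{Q}(\lambda)\mathcal{V} = \mathcal{R}(\lambda):=\begin{bmatrix}
    \lambda T_1 & -R_1 \\
    & \lambda T_2 & \ddots \\
    & & \ddots & -R_{r-1}\\
    -R_r & & & \lambda T_r
  \end{bmatrix}.
\]
Since $\mathcal{U},\mathcal{V}$ are invertible, $\mathcal{Q}(\lambda)$ and $\mathcal{R}(\lambda)$ are strictly equivalent, hence have proportional determinants, the same regularity, and the same spectrum, so it suffices to analyse $\mathcal{R}(\lambda)$.

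Next I would compute $\det\mathcal{R}(\lambda)$. Index the $rn$ rows and columns by pairs $(k,a)$, with $k\in\{1,\dots,r\}$ the block index and $a\in\{1,\dots,n\}$ the position inside a block, and reorder them lexicographically with $a$ as the major index. Since each block $T_k,R_k$ is upper triangular, this simultaneous row/column permutation (which leaves $\det$ unchanged) brings $\mathcal{R}(\lambda)$ to block upper triangular form with $n$ diagonal blocks of size $r$, the $a$-th being the scalar cyclic pencil
\[
  \mathcal{C}_a(\lambda)=\begin{bmatrix}
    \lambda (T_1)_{aa} & -(R_1)_{aa}\\
    & \lambda (T_2)_{aa} & \ddots\\
    & & \ddots & -(R_{r-1})_{aa}\\
    -(R_r)_{aa} & & & \lambda (T_r)_{aa}
  \end{bmatrix}.
\]
Only the identity permutation and the full $r$-cycle contribute nonzero terms to the Leibniz expansion of $\det\mathcal{C}_a(\lambda)$, whence $\det\mathcal{C}_a(\lambda)=\lambda^r p_a - q_a$ with $p_a:=\prod_{k=1}^r (T_k)_{aa}$ and $q_a:=\prod_{k=1}^r (R_k)_{aa}$. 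Therefore $\det\mathcal{Q}(\lambda)=c\prod_{a=1}^n(\lambda^r p_a - q_a)$ for a nonzero constant $c$ (a product of unimodular determinants of $\mathcal{U}^\ctop$ and $\mathcal{V}$).

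Finally I would read off the statement. The formal product $\Pi = M_1^{-1}N_1\cdots M_r^{-1}N_r$ is of the form considered after~\eqref{eq:formalproduct} with the factors relabelled and their order reversed; equivalently, when the $M_k$ are invertible one has $\Pi = Z_1\,(T_1^{-1}R_1\cdots T_r^{-1}R_r)\,Z_1^{-1}$ and, by Remark~\ref{continuity-rem}, in general $\Pi$ is regular exactly when $(p_a,q_a)\neq(0,0)$ for every $a$, in which case its eigenvalues are $\mu_a=q_a/p_a$, with $\mu_a=\infty$ when $p_a=0$. Comparing with $\det\mathcal{Q}(\lambda)=c\prod_a(\lambda^r p_a - q_a)$: this polynomial is not identically zero iff no factor $\lambda^r p_a-q_a$ vanishes identically, i.e.\ iff $(p_a,q_a)\neq(0,0)$ for all $a$, so $\mathcal{Q}(\lambda)$ is regular iff $\Pi$ is. Assuming regularity, a finite $\lambda_0$ lies in $\Lambda(\mathcal{Q})$ iff $\lambda_0^r p_a=q_a$ for some $a$ --- necessarily with $p_a\neq0$, since otherwise $q_a=0$ too --- i.e.\ iff $\lambda_0^r=\mu_a$ for some finite $\mu_a$; and $\infty\in\Lambda(\mathcal{Q})$ iff $\deg_\lambda\det\mathcal{Q}(\lambda)=r\cdot\#\{a:p_a\neq0\}<rn$, i.e.\ iff $p_a=0$ for some $a$, i.e.\ iff $\infty\in\Lambda(\Pi)$. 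With the convention $\sqrt[r]{\infty}=\infty$, this says exactly that $\Lambda(\mathcal{Q})$ consists of the $r$-th roots of $\Lambda(\Pi)$.

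I expect the only real friction to be bookkeeping: matching the order and the $M_k\leftrightarrow N_k$ roles in $\Pi$ with the convention under which \eqref{eq:periodiceigenvalues} is stated, and treating the eigenvalue at $\infty$ through the degree of $\det\mathcal{Q}(\lambda)$ rather than a naive substitution $\lambda=\infty$. The triangularization and the decoupling permutation are routine once the indexing is fixed.
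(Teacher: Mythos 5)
Your proof is correct, and it takes a genuinely different route from the paper's. The paper first treats the generic case (invertible coefficients, simple spectrum): it constructs, from each eigenvector $v$ of $\Pi$, an explicit null vector $\widehat v$ of $\mathcal{Q}(\lambda)$ for each $\lambda$ with $\lambda^r=\mu$, deduces $q(\lambda)=\det(\Pi-\lambda^r I)\det(M_1\cdots M_r)$, rewrites it via periodic Schur as $\det(R_1\cdots R_r-\lambda^r T_1\cdots T_r)$, and then extends to singular coefficients and multiple eigenvalues by the continuity argument of Remark~\ref{continuity-rem}. You instead apply the periodic Schur decomposition \emph{directly to $\mathcal{Q}(\lambda)$} as a unitary strict equivalence, permute to a block upper triangular pencil with $n$ cyclic $r\times r$ diagonal blocks, and compute $\det\mathcal{Q}(\lambda)=c\prod_a(\lambda^r p_a-q_a)$ exactly; no eigenvector construction, no genericity assumption, and no limiting argument enters the determinant computation. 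Your decoupling permutation and the cyclic determinant are essentially the same devices the paper develops later for the vectorized matrix equations (Lemma~\ref{lem:orderingtriangular} and Lemma~\ref{lem:determinant}), so your proof has the pleasant side-effect of unifying them. The bookkeeping you flagged as the main friction is indeed the only delicate point, and you resolve it correctly: with $T_k$ and $R_k$ the triangularizations of $M_k$ and $N_k$ as in Theorem~\ref{thm:periodicschur}, the triangularized form of $\Pi=M_1^{-1}N_1\cdots M_r^{-1}N_r$ is $T_1^{-1}R_1\cdots T_r^{-1}R_r$, so the eigenvalue convention~\eqref{eq:periodiceigenvalues} yields $\mu_a=q_a/p_a$, matching your factors. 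One small inaccuracy: you do not need Remark~\ref{continuity-rem} to conclude that $\Pi$ is regular precisely when $(p_a,q_a)\neq(0,0)$ for every $a$ --- that is the \emph{definition} of regularity of a formal product given after~\eqref{eq:periodiceigenvalues}; the continuity remark only justifies that the eigenvalues are independent of the particular periodic Schur form chosen, a fact both proofs rely on implicitly.
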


\begin{proof}
	Let us start by considering the case when $\Pi$ is regular with distinct (simple) eigenvalues, 
	and all matrices $M_i, N_i$ are invertible. Let 
	$\mu\in\C$ be an eigenvalue of $\Pi$, with $v$ a corresponding 
	right eigenvector, and let $\lambda\in\C$ be such that $\lambda^r=\mu$.
	We set $v_1 := v$, and define
	\[
	  v_j := \lambda^{} N_{j-1}^{-1} M_{j-1} v_{j-1}, 
	    \qquad j = 2, \ldots, r. 
	\]
	Then, the relation $\Pi v = \lambda^r v$ implies 
	$\mathcal Q(\lambda) \widehat{v} = 0$, where
	$\widehat{v} := [ v_1^\top \ v_2^\top \ \hdots \ v_r^\top ]^\top$, which
	can be verified by a direct computation. In particular, 
	all the $r$-th distinct roots of $\mu$ are eigenvalues of $\mathcal Q(\lambda)$.
	
	This implies that $q(\lambda) := \det(\mathcal Q(\lambda)) = 
	\det(\Pi - \lambda^r I) \cdot \det(M_1 \cdots  M_r)$, since 
	$\det (M_1 \cdots M_r)$ is the leading coefficient of the degree $nr$ 
	polynomial $\det\mathcal Q(\lambda)$. 
	Let $Q_k^\ctop M_k Z_k = T_k$ and $Q_k^\ctop N_k Z_{k+1} = R_k$ be a 
	periodic Schur decomposition of $\Pi$. Then, we may write
	\[
	  q(\lambda) = \det(T_1^{-1} R_1 \cdots T_r^{-1} R_r  - \lambda^r I) \cdot \det (T_1 \cdots T_r)= 
	    \det(R_1 \cdots R_r - \lambda^r T_1 \cdots T_r), 
	\]
	where we have swapped the factors inside the determinant using the fact
	that all the matrices are upper triangular. Using a continuity argument like the one in Remark \ref{continuity-rem}, we see that the identity $\det(\mathcal Q(\lambda))=\det(R_1 \cdots R_r - \lambda^r T_1 \cdots T_r)=:p(\la)$ also holds
	when some of the $T_i, R_i$ are singular, and even when $\Pi$ has multiple eigenvalues. This proves the second claim in the statement. In addition, 
	$p(\lambda) \equiv 0$ if and only if $T_i, R_j$ have a common diagonal
	zero for some $i,j$. Since $\mathcal Q(\lambda)$ is singular if and only if 
	$p(\lambda) \equiv 0$, this concludes the proof. 
\end{proof}

\section{Main results}\label{sec:main}

 Here we state the characterizations for the nonsingularity of a periodic system ot type~\eqref{eq:periodic} for each of the three possible cases $s\in\{1,\top,\ctop\}$ (the proofs will be given in Section~\ref{sec:existenceuniqueness}).
Later, in Section~\ref{sec:red1}, we will show that these
characterizations are enough to get a characterization of nonsingularity of the general system \eqref{eq:gensystem}.

We recall the following definition.
	
	 \begin{definition}\label{def:reciprocal} {\rm(Reciprocal free and $\ctop$-reciprocal free set \cite{bk06,ksw09})}. Let $\cS$ be a subset of $\C\cup\{\infty\}$. We say that $\cS$ is 
\begin{itemize} 
\item[{\rm(a)}] {\rm reciprocal free} if $\la\neq \mu^{-1},$ for all $\la,\mu\in \cS$; 
\item[{\rm(b)}]  {\rm $\ctop$-reciprocal free} if $\la\neq (\overline \mu)^{-1},$ for all $\la,\mu\in \cS$.
\end{itemize}
This definition includes the values $\la=0,\infty$, with the customary assumption $\la^{-1}=(\overline\la)^{-1}=\infty,0$, respectively.
\end{definition}

For brevity, we will refer to a {\em$\star$-reciprocal free set} to mean either a reciprocal free or a $\ctop$-reciprocal free set.

The characterization comes in two different forms. The first one uses eigenvalues of formal matrix products. More precisely, we have the following results. 
	
	\begin{theorem} \label{thm:1eigentheorem} Let $A_k,B_k,C_k,D_k,E_k\in\C^{n\times n}$, for $k=1,\hdots,r$. The system
	\[
	\left\{
\begin{array}{cccc}
A_kX_kB_k-C_kX_{k+1}D_k&=&E_k,& k=1,\ldots,r-1,\\
A_rX_rB_r-C_rX_1 D_r&=&E_r,
\end{array}
\right.
\]
is nonsingular if and only if the two formal matrix products
			\begin{equation}\label{eq:1formalproduct}
				C_r^{-1}A_r C_{r-1}^{-1}A_{r-1} \dotsm C_1^{-1}A_1\quad \text{ and }\quad D_r B_r^{-1} D_{r-1} B_{r-1}^{-1} \dotsm D_1 B_1^{-1}
			\end{equation}
		are regular and they have disjoint spectra. 
	\end{theorem}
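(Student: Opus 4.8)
The plan is to convert the periodic system into a single linear operator whose invertibility can be read off from a single matrix pencil (or equivalently, from a formal matrix product), and then invoke Lemma~\ref{lem:eigs} together with the classical disjoint-spectra criterion for one generalized Sylvester equation. First I would vectorize the system: writing $x_k := \vecop(X_k)$ and using $\vecop(PYQ) = (Q^\top \otimes P)\vecop(Y)$, each equation becomes $(B_k^\top \otimes A_k)x_k - (D_k^\top \otimes C_k)x_{k+1} = \vecop(E_k)$, so the whole system is a block-cyclic linear system with coefficient matrix
\[
\mathcal{M} := \begin{bmatrix}
B_1^\top \otimes A_1 & -D_1^\top \otimes C_1 & & \\
& B_2^\top \otimes A_2 & \ddots & \\
& & \ddots & -D_{r-1}^\top \otimes C_{r-1}\\
-D_r^\top \otimes C_r & & & B_r^\top \otimes A_r
\end{bmatrix}.
\]
Nonsingularity of the system is exactly nonsingularity of $\mathcal{M}$.

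Next I would recognize $\mathcal{M}$ as essentially the evaluation at $\lambda = 1$ of a block-cyclic pencil of the form appearing in Lemma~\ref{lem:eigs}, but with the twist that here the ``$M_i$'' and ``$N_i$'' blocks are the Kronecker products $B_i^\top \otimes A_i$ and $D_i^\top \otimes C_i$. The cleaner route is probably to avoid $\lambda$ altogether and argue directly: $\mathcal{M}$ is singular iff the homogeneous cyclic system has a nonzero solution, i.e.\ iff there exist $x_1,\dots,x_r$, not all zero, with $(B_k^\top \otimes A_k)x_k = (D_k^\top \otimes C_k)x_{k+1}$ for $k=1,\dots,r-1$ and $(B_r^\top \otimes A_r)x_r = (D_r^\top \otimes C_r)x_1$. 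One then eliminates $x_2,\dots,x_r$ by successive ``formal inversions'' to land on a relation tying $x_1$ to itself through the product of the blocks; made rigorous via the continuity/perturbation argument of Remark~\ref{continuity-rem} (perturb each $C_k, B_k$ to be invertible), this shows $\mathcal{M}$ is singular iff the formal product
\[
\bigl(D_r^\top \otimes C_r\bigr)^{-1}\bigl(B_r^\top \otimes A_r\bigr)\dotsm \bigl(D_1^\top \otimes C_1\bigr)^{-1}\bigl(B_1^\top \otimes A_1\bigr)
\]
is singular or has $1$ as an eigenvalue. Using the mixed-product property of Kronecker products, this formal product factors as $\bigl((B_r^\top D_r^{-\top}\dotsm B_1^\top D_1^{-\top}) \otimes (C_r^{-1}A_r \dotsm C_1^{-1}A_1)\bigr)$-type expression; more precisely its spectrum is $\{\mu/\nu : \mu \in \Lambda(\mathcal{P}_A),\ \nu \in \Lambda(\mathcal{P}_B)\}$ where $\mathcal{P}_A := C_r^{-1}A_r\dotsm C_1^{-1}A_1$ and $\mathcal{P}_B := D_r B_r^{-1}\dotsm D_1 B_1^{-1}$ are exactly the two formal products in~\eqref{eq:1formalproduct}. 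Thus ``$\mathcal{M}$ nonsingular'' becomes ``$\mathcal{P}_A, \mathcal{P}_B$ regular and $1 \notin \{\mu/\nu\}$'', i.e.\ $\Lambda(\mathcal{P}_A) \cap \Lambda(\mathcal{P}_B) = \emptyset$, which is the claim.

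An alternative, perhaps more self-contained, derivation for the single-equation step ($r=1$) is to cite \cite[Th.~1]{chu87}: $A_1 X B_1 - C_1 X D_1 = E$ is nonsingular iff $A_1 - \lambda C_1$ and $D_1 - \lambda B_1$ have disjoint spectra, which is precisely $\Lambda(C_1^{-1}A_1) \cap \Lambda(D_1 B_1^{-1}) = \emptyset$ in the invertible case, and the formal-product language handles the general case via Remark~\ref{continuity-rem}. For $r > 1$ one can instead telescope: block-eliminate to reduce the $r$-equation cyclic system to a single generalized Sylvester equation in $X_1$ with coefficients built from the formal products, then apply the $r=1$ result. I expect the main obstacle to be the bookkeeping in the singular case --- justifying that the perturbation argument of Remark~\ref{continuity-rem} commutes with the Kronecker-product spectral identity, and that the limits of eigenvalues of the perturbed finite products really do detect the eigenvalue $1$ of the formal product in the limit (one must be careful that eigenvalues escaping to $\infty$ or collapsing to $0$ are tracked correctly under the $\frac{a}{0} = \infty$ convention). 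Handling $\Lambda(\mathcal{P}_A) \ni \infty$ or $\Lambda(\mathcal{P}_B) \ni \infty$ cleanly (these correspond to singular $C_k$ or $B_k$) is the delicate part, but it is exactly the situation Lemma~\ref{lem:eigs} and Remark~\ref{continuity-rem} were set up to deal with.
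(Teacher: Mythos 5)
Your approach is correct in outline, and it is genuinely different from the paper's. The paper proves Theorem~\ref{thm:1eigentheorem} by first triangularizing the coefficients through the periodic Schur forms of the two products in~\eqref{eq:1formalproduct} (Lemma~\ref{thm:lemtriang}), then invoking Lemma~\ref{lem:orderingtriangular} to show that after a suitable permutation the vectorized system is block upper triangular with small $r\times r$ diagonal blocks $M_{ij}$, and finally computing $\det M_{ij}$ explicitly via Lemma~\ref{lem:determinant}. You instead vectorize immediately, treat the whole $n^2r\times n^2r$ coefficient matrix $\mathcal M$ as the $\lambda=1$ slice of the block-cyclic pencil of Lemma~\ref{lem:eigs} built from the Kronecker blocks, and then exploit the mixed-product identity $(D_k^\top\otimes C_k)^{-1}(B_k^\top\otimes A_k)=(D_k^{-\top}B_k^\top)\otimes(C_k^{-1}A_k)$ to factor the resulting formal product as a Kronecker product of $\mathcal P_B^{-\top}$ and $\mathcal P_A$. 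This buys conceptual clarity --- the theorem's ``two formal products'' emerge directly as the two tensor factors, and the disjoint-spectra criterion becomes the standard fact that $\lambda\otimes\mu=1$ iff $\lambda=\mu^{-1}$ --- whereas the paper's argument is more computational but sidesteps the need to reason about formal products of Kronecker blocks.

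Two points in your sketch need tightening. First, as you rightly suspect, the ``avoid~$\lambda$, eliminate variables, and use continuity'' shortcut is asymmetric: continuity gives one implication (if the perturbed products have eigenvalue~$1$, so does the limit), but not the converse (if the limiting formal product has eigenvalue~$1$ you cannot deduce $\det\mathcal M_\varepsilon=0$ for $\varepsilon>0$). The clean route is exactly the one you mention parenthetically: Lemma~\ref{lem:eigs}, whose proof shows $\det\mathcal Q(\lambda)=\det(R_1\cdots R_r-\lambda^r T_1\cdots T_r)$ even in the degenerate case, so $\det\mathcal M=\det\mathcal Q(1)=\prod_a\bigl(\prod_k(R_k)_{aa}-\prod_k(T_k)_{aa}\bigr)$ and the singularity-or-eigenvalue-$1$ dichotomy drops out. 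Second, the sentence ``$\mathcal P_A,\mathcal P_B$ regular and $1\notin\{\mu/\nu\}$'' slightly conflates two conditions: regularity of the Kronecker formal product is \emph{not} equivalent to regularity of $\mathcal P_A$ and $\mathcal P_B$ separately (it fails also when $\mathcal P_A$ and $\mathcal P_B$ share the eigenvalue $0$ or $\infty$), and ``$\mu/\nu=1$'' is ill-defined exactly there. The correct reading --- which does make the argument work --- is that ``$\Pi$ regular and $1\notin\Lambda(\Pi)$'' is equivalent to ``$\prod_k(\widehat A_k)_{ii}(\widehat B_k)_{jj}\neq\prod_k(\widehat C_k)_{ii}(\widehat D_k)_{jj}$ for all $i,j$'', which is in turn equivalent to ``$\mathcal P_A,\mathcal P_B$ regular with $\Lambda(\mathcal P_A)\cap\Lambda(\mathcal P_B)=\emptyset$'' (eigenvalues $0,\infty$ included). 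Finally, to make the Kronecker-factoring of formal products rigorous you should observe that the Kronecker product of two upper-triangular matrices is itself upper triangular in the standard ordering, so a periodic Schur form of the tensor product is obtained termwise from periodic Schur forms of $\mathcal P_A$ and $\mathcal P_B^{-\top}$; the eigenvalues then multiply as expected.
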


	\begin{theorem} \label{thm:stareigentheorem} 
Let $A_k,B_k,C_k,D_k,E_k\in\C^{n\times n}$, for $k=1,\hdots,r$. The system
\[
	\left\{
\begin{array}{cccc}
A_kX_kB_k-C_kX_{k+1}D_k&=&E_k,&k=1,\ldots,r-1,\\
A_rX_rB_r-C_rX_1^\star D_r&=&E_r,
\end{array}
\right.
\]
is nonsingular if and only if the formal matrix product
	\begin{equation} \label{eq:magicproduct}
		\Pi=D_r^{-\star}B_r^\star D_{r-1}^{-\star}B_{r-1}^\star\dotsm D_1^{-\star}B_1^\star C_r^{-1}A_rC_{r-1}^{-1}A_{r-1}\dotsm C_1^{-1}A_1
	\end{equation}
	is regular and
	\begin{itemize}
\item  if $\star=\ctop$, then $\Lambda(\Pi)$ is an $\ctop$-reciprocal-free set,
\item if $\star=\top$, then $\Lambda(\Pi)\setminus\{-1\}$ is a reciprocal-free set, and the multiplicity of $\la=-1$ as an eigenvalue of $\Pi$ is at most $1$.
\end{itemize}
	\end{theorem}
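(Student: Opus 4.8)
The plan is to reduce the $\star$-Sylvester case to the Sylvester case already settled in Theorem~\ref{thm:1eigentheorem} by a ``doubling'' construction, and then --- only for $\star=\top$ --- to resolve an additional $\pm1$ phenomenon created by this reduction. \emph{Setup and doubling.} Since all coefficients and unknowns are $n\times n$, the system is a square linear map, over $\C$ if $\star=\top$ and over $\R$ if $\star=\ctop$; hence it is nonsingular iff its homogeneous solution space $\mathcal V:=\{(X_1,\dots,X_r): A_kX_kB_k-C_kX_{k+1}D_k=0\ (k<r),\ A_rX_rB_r-C_rX_1^\star D_r=0\}$ is trivial. Applying $\star$ to each of the $r$ equations and introducing new unknowns $Y_1,\dots,Y_r$ playing the role of $X_1^\star,\dots,X_r^\star$, one obtains a periodic system of $2r$ \emph{ordinary} generalized Sylvester equations in the unknowns $Z_1:=X_1,\dots,Z_r:=X_r,Z_{r+1}:=Y_1,\dots,Z_{2r}:=Y_r$ (indices cyclic mod $2r$), with coefficients $A_k,B_k,C_k,D_k$ in the positions $k\le r$ and $B_{k-r}^\star,A_{k-r}^\star,D_{k-r}^\star,C_{k-r}^\star$ in the positions $r<k\le2r$. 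Write $\mathcal W$ for the homogeneous solution space of this doubled system.

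\emph{When is $\mathcal W=0$?} By Theorem~\ref{thm:1eigentheorem} applied to the doubled system, $\mathcal W=0$ iff its two associated formal products (built as in~\eqref{eq:1formalproduct} from the above coefficients) are regular with disjoint spectra. A direct manipulation shows that the first of them \emph{is} the product $\Pi$ of~\eqref{eq:magicproduct}, while the second is a cyclic rotation of the factors of the formal product obtained by reversing $\Pi$ and replacing each factor $F$ by $F^{-\star}$. Since regularity and spectrum of a formal product are unchanged by cyclic rotation, by inversion, and by $\star$ (which only conjugates, or leaves fixed, the diagonal entries of a triangular factor), the condition becomes: $\Pi$ is regular and $\Lambda(\Pi)\cap\Lambda(\Pi^{-\star})=\emptyset$. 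As $\Lambda(\Pi^{-\top})=\{\mu^{-1}:\mu\in\Lambda(\Pi)\}$ and $\Lambda(\Pi^{-\ctop})=\{(\overline\mu)^{-1}:\mu\in\Lambda(\Pi)\}$, with the usual conventions at $0$ and $\infty$, this disjointness says exactly that $\Lambda(\Pi)$ is reciprocal free (if $\star=\top$) or $\ctop$-reciprocal free (if $\star=\ctop$).

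\emph{From $\mathcal W$ to $\mathcal V$.} Writing $X_\bullet$ for a tuple $(X_1,\dots,X_r)$, the map $\iota\colon X_\bullet\mapsto(X_\bullet,X_\bullet^\star)$ embeds $\mathcal V$ into $\mathcal W$, and $\tau\colon(X_\bullet,Y_\bullet)\mapsto(Y_\bullet^\star,X_\bullet^\star)$ carries $\mathcal W$ into itself (this uses $\star\star=\mathrm{id}$ and the symmetric way the doubled system was built), is an involution, and has fixed-point set exactly $\iota(\mathcal V)$. If $\star=\ctop$, then $\tau$ is conjugate-linear, hence a real structure on the complex space $\mathcal W$, so $\dim_\R\mathcal W^\tau=\dim_\C\mathcal W$ and $\mathcal V\cong\mathcal W^\tau$ vanishes iff $\mathcal W$ does; together with the previous paragraph this proves the $\ctop$ part of the theorem. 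If $\star=\top$, then $\tau$ is a $\C$-linear involution, so $\mathcal W=\mathcal W_+\oplus\mathcal W_-$ with $\mathcal W_+=\mathcal W^\tau\cong\mathcal V$; thus $\mathcal W=0$ --- equivalently, by the previous paragraph, $\Pi$ regular with $\Lambda(\Pi)$ reciprocal free --- already forces $\mathcal V=0$, but when $\mathcal W\ne0$ one still has to decide whether $\tau|_{\mathcal W}$ fixes a nonzero vector.

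\emph{The $\star=\top$ refinement (the hard part).} When $\mathcal W\ne0$, I would triangularize all the coefficients, and hence $\Pi$, by the periodic Schur decomposition --- essentially the reduction of Section~\ref{sec:red2} --- solve the homogeneous doubled system by substitution, and follow how $\tau$ acts. The space $\mathcal W$ breaks into pieces indexed by matched diagonal positions $(i,j)$ with $\mu_i\mu_j=1$, $\mu_i,\mu_j\in\Lambda(\Pi)$, and $\tau$ sends the $(i,j)$-piece onto the $(j,i)$-piece. Whenever $i$ and $j$ can be taken at \emph{distinct} positions --- which is forced as soon as $\Lambda(\Pi)$ contains a reciprocal pair $\mu\ne\mu^{-1}$, and also whenever $1\in\Lambda(\Pi)$ --- then $\tau$ interchanges two complementary pieces and therefore fixes a nonzero vector, so $\mathcal V\ne0$; this is exactly why the criterion must rule out all reciprocal pairs and the eigenvalue $1$. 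For the eigenvalue $-1$, the $m$ diagonal positions carrying it assemble under $\tau$ into a ``minus-transpose'' involution whose fixed space is isomorphic to the skew-symmetric $m\times m$ matrices, of dimension $\binom m2$, which is zero precisely when $m\le1$; and a separate check shows that a non-regular $\Pi$ also forces $\mathcal V\ne0$. Collecting the cases gives ``$\Lambda(\Pi)\setminus\{-1\}$ reciprocal free and $\lambda=-1$ of multiplicity at most~$1$''. I expect this last step --- propagating the (generalized) eigenstructure correctly through the triangularization, and handling the non-regular sub-case --- to be the main obstacle; by contrast the doubling, and the whole $\star=\ctop$ case, are essentially formal.
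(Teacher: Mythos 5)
Your proposal splits naturally into two halves with very different status.

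\textbf{The case $\star=\ctop$.} Your doubling argument is exactly the paper's proof: Lemma~\ref{complexlemma} is the statement that nonsingularity of~\eqref{eq:periodic} with $s=\ctop$ is equivalent to nonsingularity of the doubled $2r$-equation system~\eqref{eq:s2}, and the paper then applies Theorem~\ref{thm:1eigentheorem} to the doubled system and observes that the second formal product has spectrum $\overline{\Lambda(\Pi)}^{-1}$ by the cyclic-rotation argument on $\Pi_2^{-\ctop}$. Your construction and your identification of $\mathcal V$ with the fixed-point set of the conjugate-linear involution $\tau$ (a real structure on $\mathcal W$, so $\mathcal V=0\iff\mathcal W=0$) is exactly the content of Lemma~\ref{complexlemma} and the subsequent proof; this half of the proposal is complete and correct.

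\textbf{The case $\star=\top$.} Here your approach diverges genuinely from the paper's. The paper observes (Remark~\ref{remark:TH}) that the doubling lemma is \emph{false} for $\top$, and therefore does \emph{not} use doubling at all for this case: it instead triangularizes the coefficients with the periodic Schur form of the single product $\Pi$, reduces to the block upper triangular system of Lemma~\ref{lem:orderingtriangular}, and reads the nonsingularity criterion directly from the determinants~\eqref{eq:detmii} and~\eqref{eq:detmij} of the diagonal blocks $M_{ii}$ and $M_{ij}$, obtaining the conditions $\la_i\neq 1$ and $\la_i\neq\la_j^{-1}$ for $i\neq j$. You, by contrast, still double the system, note correctly that $\tau$ is now $\C$-linear with $\mathcal V\cong\mathcal W_+$ (so $\mathcal W=0$ is sufficient but not necessary for $\mathcal V=0$), and then propose to analyze the action of $\tau$ on the pieces of a triangularized $\mathcal W$. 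The framework is sound and the phenomenon you describe --- that the $(i,i)$-piece for $\la_i=1$ contains a $\tau$-fixed vector while the one for $\la_i=-1$ does not, and that off-diagonal matches $\la_i\la_j=1$ always yield $\tau$-fixed vectors by pairing $(i,j)$ with $(j,i)$ --- is real (one can check it by hand already for $n=r=1$). But as you yourself flag, the refinement is a sketch, not a proof. The concrete gaps: (i) $\tau$ is defined in the \emph{original} coordinates, so you must transport it through the (possibly non-symmetric) unitary changes of bases of the periodic Schur form before you can say how it acts on the $(i,j)$-pieces --- this is where the sign distinguishing $\la_i=1$ from $\la_i=-1$ has to come from, and it is not obvious that the $m\times m$ ``minus-transpose'' model you posit survives the transport; (ii) the dimensions of the pieces of $\mathcal W$ are not established (you need to know that a matching pair $(i,j)$ contributes a nontrivial, $\tau$-tracked summand even when the $2r$-th roots accumulate or the triangular factors have several zeros on the diagonal); (iii) the non-regular case of $\Pi$, and the ``converse'' direction (that the stated spectral conditions actually force $\mathcal W_+=0$, not merely that their failure forces $\mathcal W_+\neq0$), are asserted but not argued. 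These are precisely the computations the paper's direct determinant approach makes unnecessary, and they are the parts of your sketch most likely to harbour an error. In short: the $\ctop$ half matches the paper and is done; the $\top$ half is a legitimately different strategy, plausible in outline, but with a real gap where the work is.
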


The second characterization involves eigenvalues of matrix pencils. In what follows, the notation ${\mathfrak R}_p$ stands for the set of $p$th roots of unity, namely,
\begin{equation} \label{rootsofunity}
	{\mathfrak R}_p:=\{ e^{2\pi\ci j/p}, \ j = 0, 1,\ldots, p-1 \}.
\end{equation}

The following results are obtained directly from Theorems~\ref{thm:1eigentheorem}
and \ref{thm:stareigentheorem} by means of Lemma~\ref{lem:eigs}.

	 \begin{theorem} \label{thm:pencilcond} Let $A_k,B_k,C_k,D_k,E_k\in\C^{n\times n}$, for $k=1,\hdots,r$. 
	 The system~\eqref{eq:periodic}, with $s=\star$, 	 
is nonsingular if and only if the matrix pencil 
		\begin{equation}\label{eq:pencil}
			\cQ(\lambda) := \begin{bmatrix}
				\lambda A_1 & C_1\\
				& \ddots & \ddots \\
				& & \lambda A_r & C_r \\
				& & & \lambda B_1^\star & D_1^\star \\
				& & & & \ddots & \ddots \\
				& & & & & \ddots & D_{r-1}^\star\\
			 -D_r^\star & & & & & & \lambda B_r^\star \\ 
			\end{bmatrix}
		\end{equation}
		is regular and 
		\begin{itemize}
		\item if $\star=\ctop$, then $\Lambda(\cQ)$ is $\ctop$-reciprocal-free, and
		\item if $\star=\top$, then $\Lambda(\cQ) \setminus {\mathfrak R}_{2r}$ is reciprocal free and the multiplicity of $\xi$, for any $\xi\in{\mathfrak R}_{2r}$, is at most $1$. 
	\end{itemize}
	\end{theorem}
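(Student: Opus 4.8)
The plan is to derive Theorem~\ref{thm:pencilcond} as a straightforward consequence of Theorem~\ref{thm:stareigentheorem} together with Lemma~\ref{lem:eigs}, by identifying the pencil $\cQ(\lambda)$ in \eqref{eq:pencil} as (a reordering of) the block-cyclic pencil attached to the formal product $\Pi$ in \eqref{eq:magicproduct}. First I would observe that $\Pi$ has exactly $2r$ factors, alternating between inverses (the factors $D_j^{-\star}$ and $C_k^{-1}$) and plain matrices (the factors $B_j^\star$ and $A_k$), arranged exactly in the pattern $M_1^{-1}N_1 M_2^{-1}N_2\dotsm M_{2r}^{-1}N_{2r}$ required by Lemma~\ref{lem:eigs}: reading \eqref{eq:magicproduct} from right to left, the factors come in pairs $(C_k^{-1})(A_k)$ for $k=1,\dots,r$ followed by pairs $(D_j^{-\star})(B_j^\star)$ for $j=1,\dots,r$. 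Applying Lemma~\ref{lem:eigs} with this length-$2r$ factorization, the associated block-cyclic pencil is regular if and only if $\Pi$ is regular, and in that case its spectrum is exactly the set of $(2r)$-th roots of the eigenvalues of $\Pi$. A permutation of block rows and columns (relabelling the auxiliary vector blocks) turns that cyclic pencil into the pencil $\cQ(\lambda)$ displayed in \eqref{eq:pencil}; since permutation similarity preserves regularity and the spectrum, the same statement holds for $\cQ(\lambda)$.

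Next I would translate the spectral conditions. Let me write $\mathfrak{r}\colon \C\cup\{\infty\}\to 2^{\C\cup\{\infty\}}$ for the map sending $\mu$ to the set of its $(2r)$-th roots (with $\sqrt[2r]{\infty}=\infty$ and $\sqrt[2r]{0}=0$). By Lemma~\ref{lem:eigs}, $\Lambda(\cQ)=\bigcup_{\mu\in\Lambda(\Pi)}\mathfrak{r}(\mu)$. The key elementary fact to record is how the reciprocal-free conditions behave under the root map. For $\star=\ctop$: a point $\xi\in\Lambda(\cQ)$ lies in a root fibre over $\mu=\xi^{2r}\in\Lambda(\Pi)$, and $\xi=(\overline{\eta})^{-1}$ for some $\eta\in\Lambda(\cQ)$ over $\nu=\eta^{2r}$ is equivalent, upon raising to the $(2r)$-th power and using $\overline{\eta^{2r}}=\overline{\eta}^{\,2r}$, to $\mu=(\overline{\nu})^{-1}$; conversely, if $\mu=(\overline{\nu})^{-1}$ with $\mu,\nu\in\Lambda(\Pi)$, then choosing compatible roots produces $\xi,\eta\in\Lambda(\cQ)$ with $\xi=(\overline{\eta})^{-1}$. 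Hence $\Lambda(\cQ)$ is $\ctop$-reciprocal free if and only if $\Lambda(\Pi)$ is $\ctop$-reciprocal free, which is exactly the condition of Theorem~\ref{thm:stareigentheorem} for $\star=\ctop$. This gives the first bullet.

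For $\star=\top$ the same computation shows that $\xi\in\Lambda(\cQ)$ satisfies $\xi=\eta^{-1}$ for some $\eta\in\Lambda(\cQ)$ exactly when $\mu=\xi^{2r}$ satisfies $\mu=\nu^{-1}$ for some $\nu\in\Lambda(\Pi)$ — with the crucial caveat that the ``bad'' eigenvalue $\mu=-1$ of $\Pi$ (the unique value with $\mu=\mu^{-1}$ permitted to occur, and with multiplicity at most $1$) has root fibre $\mathfrak{r}(-1)=\{e^{\pi\ci(2j+1)/(2r)}:j=0,\dots,2r-1\}$, which is precisely $\mathfrak{R}_{4r}\setminus\mathfrak{R}_{2r}$ — hmm, here I must be careful: the $(2r)$-th roots of $-1$ are the primitive-ish roots $e^{\ci\pi(2j+1)/(2r)}$, and I would check that this set equals $\mathfrak{R}_{2r}$ only after accounting for the sign; in fact the statement in the theorem is that $\Lambda(\cQ)\setminus\mathfrak{R}_{2r}$ is reciprocal free, so I would verify that the self-reciprocal points of the unit circle that can legitimately appear, namely $\pm 1$ and more generally any $\xi$ with $\xi^{2r}\in\{1,-1\}$ hmm — the clean way is: $\xi=\pm1 \iff \xi^{-1}=\xi$, and $\xi^{2r}=1$ captures $\mathfrak{R}_{2r}$, while the $-1$ eigenvalue of $\Pi$ contributes roots $\xi$ with $\xi^{2r}=-1$, i.e. $\xi\in\mathfrak{R}_{4r}\setminus\mathfrak{R}_{2r}$. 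So I would reconcile the root fibre of $-1$ with $\mathfrak{R}_{2r}$ by noting $(2r)$-th roots of $-1$ together with $(2r)$-th roots of $1$ make up $\mathfrak{R}_{4r}$ — the theorem as stated removes only $\mathfrak{R}_{2r}$, so I suspect the intended reading is that any $\xi$ with $\xi^{2r}=\pm 1$, i.e. $\xi\in\mathfrak{R}_{4r}$, is the exceptional set, and I would double-check the exact indexing against the definition \eqref{rootsofunity} before committing; in any case, the multiplicity-one statement for $\Pi$ at $-1$ transfers to multiplicity-one for $\cQ$ at each of the finitely many roots $\xi$ over $-1$, because distinct eigenvalues of $\Pi$ have disjoint root fibres and the algebraic multiplicity of $\cQ$ at $\xi$ equals that of $\Pi$ at $\xi^{2r}$ (read off from the factored determinant $\det\cQ(\lambda)=\det(R_1\cdots R_{2r}-\lambda^{2r}T_1\cdots T_{2r})$ in the proof of Lemma~\ref{lem:eigs}). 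I would then conclude that the $\star=\top$ spectral condition on $\Lambda(\Pi)$ in Theorem~\ref{thm:stareigentheorem} is equivalent to the stated condition on $\Lambda(\cQ)$, completing the proof.

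The main obstacle I anticipate is purely bookkeeping: getting the block-permutation that carries the length-$2r$ cyclic pencil of Lemma~\ref{lem:eigs} precisely onto the displayed form \eqref{eq:pencil} (matching which $M_i,N_i$ go where, and the placement of the single $-D_r^\star$ in the corner), and nailing down the exact exceptional root set in the $\top$ case so that it agrees verbatim with ``$\mathfrak{R}_{2r}$'' as printed — this requires carefully tracking the factor of $2r$ versus $4r$ in the roots of $\pm1$ and confirming the statement's intent. Everything else is a direct substitution of Lemma~\ref{lem:eigs} into Theorem~\ref{thm:stareigentheorem}.
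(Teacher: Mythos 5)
Your overall strategy — reduce Theorem~\ref{thm:pencilcond} to Theorem~\ref{thm:stareigentheorem} by recognizing $\cQ(\lambda)$ as a block-cyclic pencil of the form appearing in Lemma~\ref{lem:eigs} — is the same as the paper's. However, the specific identification you make, namely $\Lambda(\cQ)=\sqrt[2r]{\Lambda(\Pi)}$, is incorrect, and this is precisely the source of the $\mathfrak{R}_{2r}$-vs-$\mathfrak{R}_{4r}$ mismatch you flagged at the end; it is not resolvable by re-reading the definition \eqref{rootsofunity}.

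Match the blocks of $\cQ(\lambda)$ in \eqref{eq:pencil} against the template of Lemma~\ref{lem:eigs}. The $\lambda$-positions give $M_1=A_1,\dots,M_r=A_r$, $M_{r+1}=B_1^\star,\dots,M_{2r}=B_r^\star$. The off-$\lambda$ blocks of \eqref{eq:pencil} carry a plus sign on the superdiagonal and $-D_r^\star$ in the corner, whereas Lemma~\ref{lem:eigs} has $-N_i$ on the superdiagonal and $-N_{2r}$ in the corner; so $N_i=-C_i$ for $i\leq r$, $N_{r+j}=-D_j^\star$ for $j\leq r-1$, and $N_{2r}=+D_r^\star$. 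Therefore the formal product that Lemma~\ref{lem:eigs} attaches to $\cQ$ is
\[
\Pi' = M_1^{-1}N_1\cdots M_{2r}^{-1}N_{2r} = (-1)^{2r-1}\,A_1^{-1}C_1\cdots A_r^{-1}C_r\, B_1^{-\star}D_1^\star\cdots B_r^{-\star}D_r^\star = -\Pi^{-1},
\]
where $\Pi$ is the product \eqref{eq:magicproduct}; in particular there is no block permutation carrying the cyclic pencil of $\Pi$ onto $\cQ$, because the $\lambda$-blocks would then contain $D_j^\star$ and $C_k$ rather than $A_k$ and $B_j^\star$. Consequently the correct relation is
\[
\Lambda(\cQ)=\sqrt[2r]{-\,\Lambda(\Pi)^{-1}},
\]
which is exactly the identity the paper starts from.

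With the correct formula the $\top$-case clicks into place with no factor of $4r$: the eigenvalue $\mu=-1$ of $\Pi$ corresponds to $-\mu^{-1}=1$ of $\Pi'$, and $\sqrt[2r]{1}=\mathfrak{R}_{2r}$ precisely. The other ingredient you are missing is Lemma~\ref{lem:reciprocalfreedom}: the translation from reciprocal-freeness of $\Lambda(\Pi)$ to that of $\Lambda(\cQ)$ composes three maps — negation, reciprocal, and $(2r)$-th root — and Lemma~\ref{lem:reciprocalfreedom} states that each preserves the $\star$-reciprocal-free property. You invoked the root-map invariance implicitly but omitted negation and inversion, which is why your conditions on $\Lambda(\cQ)$ cannot be reconciled with the theorem's. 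The multiplicity transfer you sketch (reading algebraic multiplicity off the factored determinant in the proof of Lemma~\ref{lem:eigs}) is sound once it is applied to the fibre over $1$ rather than over $-1$.
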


The proof of Theorem~\ref{thm:pencilcond} can be readily obtained by means
of the following result combined with Theorem~\ref{thm:stareigentheorem}.

\begin{lemma} \label{lem:reciprocalfreedom}
	Let $\mathcal S$ be a subset of $\C\cup\{\infty\}$, let $p\in\N$, and define the sets:
	\[
	-\mathcal S:=\{-z\,| \ z\in\mathcal S\},\quad
	\mathcal S^{-1}:=\{z^{-1}\, | \ z\in\mathcal S\},\quad
	\sqrt[p]{\mathcal S} := \{ 
	z \in \C\cup\{\infty\} \ | \ z^p \in \mathcal S
	\}
	\]
	 (we set $\infty^p=\infty,-\infty=\infty,$ and $\infty^{-1}=0,0^{-1}=\infty$). Then the following statements are equivalent:
	 \begin{itemize}
	 \item[(a)] $\mathcal S$ is $\star$-reciprocal free.
	 \item[(b)] $-\mathcal S$ is $\star$-reciprocal free.
	 \item[(c)] $\mathcal S^{-1}$ is $\star$-reciprocal free.
	 \item[(d)] $\sqrt[p]{\mathcal S}$ is $\star$-reciprocal free.
	 \end{itemize}
\end{lemma}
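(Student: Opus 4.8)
The plan is to prove the four-way equivalence by showing that each of the three set operations $\mathcal S \mapsto -\mathcal S$, $\mathcal S \mapsto \mathcal S^{-1}$, and $\mathcal S \mapsto \sqrt[p]{\mathcal S}$ preserves (and reflects) $\star$-reciprocal freeness. Since each operation is an involution or ``invertible'' in the appropriate sense — $-(-\mathcal S) = \mathcal S$, $(\mathcal S^{-1})^{-1} = \mathcal S$, and $\mathcal S = \{\,z^p : z \in \sqrt[p]{\mathcal S}\,\}$ — it suffices to prove just one direction of each implication; the converse then follows by applying the same statement to the transformed set. So concretely I would prove (a)$\Rightarrow$(b), (a)$\Rightarrow$(c), (a)$\Rightarrow$(d), and note that each reverse implication is obtained by substituting $-\mathcal S$, $\mathcal S^{-1}$, $\sqrt[p]{\mathcal S}$ respectively for $\mathcal S$.

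The first two are almost immediate unwindings of the definition. For (a)$\Leftrightarrow$(c): writing $\star$-reciprocal freeness of $\mathcal S$ as the statement ``for all $\lambda,\mu\in\mathcal S$, $\lambda\mu^{\langle\star\rangle}\neq 1$'' (where $\mu^{\langle\star\rangle}$ is $\mu$ when $\star=\top$ and $\overline\mu$ when $\star=\ctop$), the condition is visibly symmetric under swapping the roles of $\lambda$ and $\mu^{-1}$ — indeed $\lambda = (\overline\mu)^{-1}$ iff $\mu^{-1} = (\overline\lambda)^{-1}$ after conjugating and inverting — so passing to $\mathcal S^{-1}$ just relabels the quantified elements. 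For (a)$\Leftrightarrow$(b): if $-\lambda = (-\mu)^{-\langle\star\rangle}$ then $\lambda = \mu^{-\langle\star\rangle}$ (the two sign flips cancel, since $(-\mu)^{-1} = -\mu^{-1}$ and conjugation commutes with negation on reals — here the $-1$ is real so $\overline{-\mu} = -\overline\mu$), giving a bijection between ``bad pairs'' in $-\mathcal S$ and bad pairs in $\mathcal S$. One must also check the boundary cases $0,\infty$ against the stated conventions ($\lambda^{-1} = \infty$ when $\lambda = 0$, etc.), which is routine.

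The implication (a)$\Rightarrow$(d) is the one requiring a genuine idea, and it is where I expect the main (small) obstacle. Suppose $\sqrt[p]{\mathcal S}$ is \emph{not} $\star$-reciprocal free: there exist $z,w\in\sqrt[p]{\mathcal S}$ with $z = w^{-\langle\star\rangle}$. Raising to the $p$-th power gives $z^p = (w^{-\langle\star\rangle})^p = (w^p)^{-\langle\star\rangle}$, using that $p$-th powering commutes with both inversion and complex conjugation, and that $z^p, w^p\in\mathcal S$ by definition of $\sqrt[p]{\cdot}$. Hence $\mathcal S$ is not $\star$-reciprocal free, proving the contrapositive. The subtlety is that the map $z\mapsto z^p$ is not injective, so a single bad pair $\{z,w\}$ in $\sqrt[p]{\mathcal S}$ might collapse to a bad pair with $z^p = w^p$ — but this causes no problem: the definition of reciprocal freeness allows $\lambda = \mu$, so a ``bad pair'' $\{\zeta,\zeta\}$ with $\zeta = \zeta^{-\langle\star\rangle}$ is still a legitimate witness of failure. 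For the converse (d)$\Rightarrow$(a), or equivalently to close the loop, I apply what was just proven: replacing $\mathcal S$ by $\sqrt[p]{\mathcal S}$ and noting $\sqrt[p]{\sqrt[p]{\mathcal S}} \supseteq$ (in fact the relevant relation) — more cleanly, I observe that if $\mathcal S$ is not $\star$-reciprocal free via a pair $\{\lambda,\mu\}$, then picking any $p$-th roots $z\in\sqrt[p]{\{\lambda\}}$, I can choose the $p$-th root $w$ of $\mu$ with $w = z^{-\langle\star\rangle}$ (this is possible because $w^p = z^{-p\langle\star\rangle} = (z^p)^{-\langle\star\rangle} = \lambda^{-\langle\star\rangle} = \mu$), exhibiting a bad pair in $\sqrt[p]{\mathcal S}$. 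Again the degenerate cases $0,\infty$ must be checked against the stated conventions $\infty^p = \infty$, $0^{-1} = \infty$, $\infty^{-1} = 0$, but these are mechanical. Assembling: (a)$\Leftrightarrow$(b), (a)$\Leftrightarrow$(c), (a)$\Leftrightarrow$(d) together give the full equivalence.
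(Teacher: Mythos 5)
Your proof is correct, but there is nothing in the paper to compare it against: the paper does not prove this lemma, instead citing \cite[Lemma 3]{brunofernando} for the equivalence (a)$\Leftrightarrow$(d) in the case $p=2$ and declaring the extension to arbitrary $p$, together with the remaining equivalences, ``straightforward.'' So you are supplying omitted details rather than paralleling or diverging from an argument in the text. The substance is right: writing the failure of $\star$-reciprocal freeness as the existence of $\lambda,\mu$ with $\lambda\mu^{\langle\star\rangle}=1$, the commutation of negation, inversion, conjugation and $p$-th powers with one another yields a bijection between such witnesses in $\mathcal S$ and in each of $-\mathcal S$, $\mathcal S^{-1}$, $\sqrt[p]{\mathcal S}$, and the stated conventions for $0,\infty$ are consistent with all of these manipulations. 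One small caution about your framing: the ``invertibility gives the converse for free'' shortcut is clean for $-\mathcal S$ and $\mathcal S^{-1}$ (genuine involutions), but it does not apply to $\sqrt[p]{\cdot}$ --- applying (a)$\Rightarrow$(d) to $\sqrt[p]{\mathcal S}$ yields a statement about $\sqrt[p]{\sqrt[p]{\mathcal S}}$, not about $\mathcal S$, and $(\sqrt[p]{\mathcal T})^p=\mathcal T$ is not the same operation in reverse. You notice this mid-argument and pivot to the direct construction (choose a $p$-th root $z$ of $\lambda$ and check that $w:=(z^{\langle\star\rangle})^{-1}$ is a $p$-th root of $\mu$), which is correct; I would just drop the misleading ``one direction suffices'' claim as applied to (d). Your remark that collapses under $z\mapsto z^p$ are harmless because the definition permits $\lambda=\mu$ is exactly the right thing to say.
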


 The equivalence between claims (a) and (d) in Lemma \ref{lem:reciprocalfreedom} can be found in \cite[Lemma 3]{brunofernando} for $p=2$. The extension to arbitrary $p$, as well as the other equivalences, are straightforward.

\noindent {\em Proof of Theorem {\rm\ref{thm:pencilcond}}}.
By Lemma~\ref{lem:eigs}, $\Lambda (\mathcal Q)=\sqrt[2r]{-\Lambda(\Pi^{-1})}=\sqrt[2r]{-\Lambda(\Pi)^{-1}}$, with $\Pi$ as in Theorem~\ref{thm:stareigentheorem} (the second identity is immediate). From this, we also get $\sqrt[2r]{-(\Lambda(\Pi)\setminus\{-1\})^{-1}}=
\sqrt[2r]{-\Lambda(\Pi)^{-1}\setminus\{1\}}=\Lambda (\mathcal Q)\setminus{\mathfrak R}_{2r}$.

Now, Theorem \ref{thm:pencilcond} is an immediate consequence of Theorem \ref{thm:stareigentheorem} and Lemma \ref{lem:reciprocalfreedom}. 
\hfill$\square$

Theorem \ref{thm:pencilcond} is an extension of 
\cite[Th. 15]{brunofernando}, where the case of a single generalized $\star$-Sylvester equation is treated. It also resembles the characterization obtained in \cite[Th. 3]{byers-rhee} for systems of generalized Sylvester equations (i.e., without $\star$). We reproduce this last result here, for completeness.

\begin{theorem}[Byers and Rhee, \cite{byers-rhee}]\label{thm:byersrhee} 
The system~\eqref{eq:periodic}, with $s=1$, is nonsingular if and only if the matrix pencils
\[
\left[\begin{array}{cccc}
\la A_{1}&C_1&&\\
&\la A_{2}&\ddots&\\
&&\ddots&C_{r-1}\\
C_r&&&\la A_r
\end{array}\right]\quad\mbox{and}\quad
\left[\begin{array}{cccc}
\la D_{1}&B_1&&\\
&\la D_{2}&\ddots&\\
&&\ddots&B_{r-1}\\
B_r&&&\la D_r
\end{array}\right]
\]
are regular and have disjoint spectra. 
\end{theorem}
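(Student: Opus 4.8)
The final statement is Theorem~\ref{thm:byersrhee} (Byers and Rhee), which I would prove by reduction to Theorem~\ref{thm:1eigentheorem} via Lemma~\ref{lem:eigs}, exactly as the authors did for Theorem~\ref{thm:pencilcond}. The plan is as follows.

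First, I would invoke Theorem~\ref{thm:1eigentheorem}: the periodic system \eqref{eq:periodic} with $s=1$ is nonsingular if and only if the two formal matrix products
\[
\Pi_1 := C_r^{-1}A_r C_{r-1}^{-1}A_{r-1}\dotsm C_1^{-1}A_1, \qquad
\Pi_2 := D_r B_r^{-1} D_{r-1} B_{r-1}^{-1}\dotsm D_1 B_1^{-1}
\]
are regular and have disjoint spectra. The task is then to translate this into the stated pencil condition. For $\Pi_1$, I would apply Lemma~\ref{lem:eigs} directly (with $M_k = C_k$, $N_k = A_k$ in the lemma's notation, after matching the cyclic pattern): the first pencil in the statement is regular iff $\Pi_1$ is regular, and its eigenvalues are precisely the $r$-th roots of $\Lambda(\Pi_1)$. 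For $\Pi_2$, the inverses sit in the \emph{even} positions relative to $\Pi_1$'s convention, so I would use the companion formal product $\widetilde\Pi$ discussed right after Lemma~\ref{lem:eigs} (or, equivalently, observe that $\Pi_2$ is a cyclic rotation of a product of the form handled by Lemma~\ref{lem:eigs}, which does not change the spectrum); either way the second pencil is regular iff $\Pi_2$ is regular, with eigenvalues the $r$-th roots of $\Lambda(\Pi_2)$.

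With both translations in hand, regularity of the two pencils is equivalent to regularity of $\Pi_1$ and $\Pi_2$. For the spectral condition, I would use that $z \mapsto z^r$ is surjective onto $\C\cup\{\infty\}$ and that $\sqrt[r]{\mathcal S_1} \cap \sqrt[r]{\mathcal S_2} = \emptyset$ if and only if $\mathcal S_1 \cap \mathcal S_2 = \emptyset$ (a root $\zeta$ lies in both root-sets iff $\zeta^r$ lies in both original sets). Hence the two pencils have disjoint spectra iff $\Lambda(\Pi_1)\cap\Lambda(\Pi_2)=\emptyset$. Combining, the pencil condition in the statement is equivalent to the condition in Theorem~\ref{thm:1eigentheorem}, which proves the claim.

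The only real subtlety — and the place where care is needed — is the even-versus-odd placement of the inverses in $\Pi_2$ and the precise cyclic alignment between the products in \eqref{eq:1formalproduct} and the ``standard form'' $M_1^{-1}N_1\cdots M_r^{-1}N_r$ of Lemma~\ref{lem:eigs}; one must check that a cyclic rotation of the factors of a formal matrix product preserves both its regularity and its spectrum (for nonsingular factors this is the standard fact that $\Lambda(AB)=\Lambda(BA)$, and the general case follows by the same continuity argument as in Remark~\ref{continuity-rem}). Everything else is bookkeeping, and the proof is essentially a one-paragraph corollary of Theorem~\ref{thm:1eigentheorem} and Lemma~\ref{lem:eigs}, entirely parallel to the proof of Theorem~\ref{thm:pencilcond} given above.
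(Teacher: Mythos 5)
Your reduction to Theorem~\ref{thm:1eigentheorem} via Lemma~\ref{lem:eigs}, in the spirit of the paper's derivation of Theorem~\ref{thm:pencilcond}, is the natural route; the paper itself quotes Theorem~\ref{thm:byersrhee} from \cite{byers-rhee} without proof. But the index alignment that you defer to ``bookkeeping'' is exactly where the argument fails. First, a warning sign: the eigenvalue correspondence is not $\Lambda(\mathcal{P}_i)=\sqrt[r]{\Lambda(\Pi_i)}$. Matching the first pencil to Lemma~\ref{lem:eigs} requires $M_k=A_k$, $N_k=-C_k$ (not $M_k=C_k$, $N_k=A_k$ as you wrote), so the associated formal product is $(-1)^rA_1^{-1}C_1\dotsm A_r^{-1}C_r=(-1)^r\Pi_1^{-1}$, giving $\Lambda(\mathcal{P}_1)=\sqrt[r]{(-1)^r\Lambda(\Pi_1)^{-1}}$. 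The sign and the inverse are bijections of $\C\cup\{\infty\}$, so this alone would not break the disjointness argument, but it signals that the alignment has to be tracked, not waved away.

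The genuine gap is in the second pencil. Matching it to Lemma~\ref{lem:eigs} ($M_k=D_k$, $N_k=-B_k$) gives the formal product $(-1)^rD_1^{-1}B_1\dotsm D_r^{-1}B_r$, whereas bringing $\Pi_2^{-1}=B_1D_1^{-1}B_2D_2^{-1}\dotsm B_rD_r^{-1}$ into the standard form of Lemma~\ref{lem:eigs} by a cyclic rotation yields $D_1^{-1}B_2\,D_2^{-1}B_3\dotsm D_r^{-1}B_1$: the factor paired with $D_k^{-1}$ is $B_{k+1}$, not $B_k$. These are different formal products with different spectra in general. For instance, with $r=n=2$, $D_1=\diag(1,2)$, $D_2=\bigl[\begin{smallmatrix}1&1\\0&1\end{smallmatrix}\bigr]$, $B_1=\bigl[\begin{smallmatrix}1&0\\1&1\end{smallmatrix}\bigr]$, $B_2=\diag(2,3)$, the characteristic polynomial of $\Pi_2^{-1}$ is $\la^2-\tfrac32\la+3$ while that of $D_1^{-1}B_1D_2^{-1}B_2$ is $\la^2-2\la+3$. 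So your assertion that ``the second pencil is regular iff $\Pi_2$ is regular, with eigenvalues the $r$-th roots of $\Lambda(\Pi_2)$'' is false for the pencil as written in the statement, and the claimed cyclic-rotation argument does not actually connect the two. The pencil whose spectrum is $\sqrt[r]{(-1)^r\Lambda(\Pi_2)^{-1}}$, and which would make your proof go through, has the off-diagonal blocks cyclically shifted by one, namely
\[
\begin{bmatrix}
\la D_{1}&B_2&&\\
&\la D_{2}&\ddots&\\
&&\ddots&B_{r}\\
B_1&&&\la D_r
\end{bmatrix}.
\]
Only after that replacement do both pencils have spectra $\sqrt[r]{(-1)^r\Lambda(\Pi_i)^{-1}}$, at which point your final step (disjointness preserved under $z\mapsto z^r$ and $z\mapsto(-1)^rz^{-1}$) is correct. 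As written, the correspondence you assume is not available, and you need to carry out the matching explicitly to see it.
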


Our strategy to prove Theorems \ref{thm:1eigentheorem} and \ref{thm:stareigentheorem} for periodic systems \eqref{eq:periodic} relies on several steps. First, we use the fact that the system is equivalent to a system with triangular coefficients, as shown in Section~\ref{sec:triangular}. Second, in Section~\ref{sec:backsubstitution}, when $s=1$ or $s=\top$, we transform the system of matrix equations with triangular coefficients to an equivalent linear system that is block upper triangular in a suitable basis (given by an appropriate order of the unknowns). The remaining case $s=\ctop$ is reduced to the case $s=1$ in Section~\ref{sec:linearizing}. Third, we prove in Section~\ref{sec:existenceuniqueness} that the diagonal blocks of the matrix coefficient of the resulting block triangular system are invertible if and only if the conditions in the statement of Theorems \ref{thm:1eigentheorem} and \ref{thm:stareigentheorem} hold. 


\section{Reducing the problem to periodic systems}\label{sec:red1}

In this section, we are going to show how to reduce the problem of nonsingularity of a general system \eqref{eq:gensystem} to the question on nonsingularity of periodic systems \eqref{eq:periodic} with at most one $\star$ in the last equation.


	\subsection{Reduction to an irreducible system}\label{sec:red-irred}

	We say that the system \eqref{eq:gensystem} of $r$ equations in $s$ unknowns is {\em reducible} if there are $0<k<s$ unknowns appearing only in $0<h<r$ equations and the remaining $s-k$ unknowns appear only in the remaining $r-h$ equations. In other words, a reducible system can be partitioned into two systems with no common unknowns. A system is said to be {\em irreducible} if it is not reducible.  

Let $\mS$ be a system of $r$ ordered equations like \eqref{eq:gensystem}. Let $\{1,\hdots,r\}=\cI_1\cup\cdots\cup \cI_\ell$ be a partition of the set of indices. Then we denote by $\mS(\cI_j)$, for $j=1,\hdots,\ell$, the system of equations comprising the equations with indices in $\cI_j$.

	\begin{proposition}\label{prop:partition}
Let $\mS$ be a system \eqref{eq:gensystem} with $r$ equations. There exists a partition $\mathcal I_1\cup\cdots\cup\mathcal I_\ell$ of $\{1,\ldots,r\}$ such that, for each $j=1,\ldots,\ell$, the system $\mS(\cI_j)$ is irreducible.
	\end{proposition}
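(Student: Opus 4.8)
The plan is to prove Proposition~\ref{prop:partition} by building a graph that encodes the dependency structure of the system and reading off its connected components. Concretely, I would associate to the system $\mS$ an undirected graph $G$ whose vertex set is $\{1,\ldots,r\}$ (one vertex per equation), and in which two equations $k$ and $k'$ are joined by an edge whenever they share at least one unknown, i.e.\ whenever $\{\alpha_k,\beta_k\}\cap\{\alpha_{k'},\beta_{k'}\}\neq\emptyset$. (Equivalently one can use a bipartite equation--unknown incidence graph, but the equation graph $G$ is all that is needed here.)

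The key observation is a dictionary between the combinatorial notions and the notion of reducibility in the statement: a set $\cJ\subseteq\{1,\ldots,r\}$ of equations has the property that the unknowns appearing in $\mS(\cJ)$ are disjoint from the unknowns appearing in $\mS(\{1,\ldots,r\}\setminus\cJ)$ if and only if $\cJ$ is a union of connected components of $G$. The ``if'' direction is immediate: an edge crossing between $\cJ$ and its complement would witness a shared unknown. For the ``only if'' direction, if some edge $\{k,k'\}$ of $G$ had $k\in\cJ$ and $k'\notin\cJ$, the unknown shared by equations $k$ and $k'$ would appear on both sides, contradicting disjointness; hence no component of $G$ is split by $\cJ$. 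With this dictionary, ``$\mS$ is reducible'' means exactly that $G$ is disconnected (the constraints $0<k<s$ and $0<h<r$ just say the partition into two parts is nontrivial in both unknowns and equations, which is automatic once $G$ has at least two components, using that every equation involves at least one unknown).

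Given this, the proposition follows by taking $\cI_1,\ldots,\cI_\ell$ to be the vertex sets of the connected components of $G$. This is a partition of $\{1,\ldots,r\}$ by definition of connected component. It remains to check that each $\mS(\cI_j)$ is irreducible as a system in its own right: if $\mS(\cI_j)$ were reducible, there would be a nontrivial splitting of $\cI_j$ into two sets of equations sharing no unknown, which by the dictionary above (applied to the induced subgraph $G[\cI_j]$, which is connected) would disconnect a connected graph — a contradiction. One should also note the edge case $\ell=1$, i.e.\ $G$ connected, in which the partition is trivial and $\mS$ itself is irreducible; and the degenerate possibility of an equation containing no unknowns, which does not occur here since every equation in \eqref{eq:gensystem} contains $X_{\alpha_k}$ and $X_{\beta_k}$.

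I do not expect any serious obstacle: the whole content is the translation between ``reducible'' and ``disconnected'', and once that is set up, existence of the partition is the standard fact that a graph is the disjoint union of its connected components. The only place requiring a little care is matching the quantitative phrasing in the definition of reducibility (the strict inequalities $0<k<s$, $0<h<r$ counting unknowns and equations) with the purely graph-theoretic statement, and making sure that ``irreducible'' for the subsystem $\mS(\cI_j)$ is interpreted relative to the unknowns actually appearing in $\cI_j$ rather than relative to the full unknown set of $\mS$ — but since distinct components share no unknowns, these two viewpoints agree.
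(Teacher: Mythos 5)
Your proof is correct, but it takes a genuinely different route from the paper's. The paper proves Proposition~\ref{prop:partition} by strong induction on $r$: if $\mS$ is irreducible, take $\ell=1$; otherwise, by the very definition of reducibility, split $\mS$ into two nonempty subsystems on fewer equations and apply the inductive hypothesis to each, then concatenate the resulting partitions. That argument is short and needs no auxiliary machinery. Your argument instead constructs the equation graph $G$, establishes the dictionary ``reducible $\iff$ $G$ disconnected'' (you checked both directions of this correctly, including the matching of the $0<k<s$, $0<h<r$ cardinality constraints), and then reads off the partition as the connected components of $G$, noting that a connected induced subgraph cannot be disconnected. Both proofs are valid. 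What the graph approach buys is more structure: it makes the partition canonical (and hence unique), and it is directly algorithmic. In fact the paper itself essentially adopts your viewpoint later, in Section~\ref{sec:algored}, where Step~1 of the algorithm identifies the irreducible subsystems as the connected components of a (dual) graph whose vertices are the unknowns and whose edges are the equations; the inductive proof of Proposition~\ref{prop:partition} is just the more economical choice for the existence statement in isolation. One small remark: your dictionary argument implicitly uses that each equation in~\eqref{eq:gensystem} involves at least one unknown, which you correctly note holds by the form of the equations, so there are no spurious isolated ``empty'' equations to worry about.
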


	\begin{proof}
	We proceed by strong induction on $r$. If $r=1$ the system has only one equation and thus it is irreducible. Let $r > 1$ and consider a system with $r$ equations. If it is irreducible, then we can choose $\ell = 1$ and 
	$\mathcal I_1 = \{ 1, \ldots, r \}$. Otherwise, it
	can be split (by definition) into two systems with indices in two disjoint nonempty index sets
	$\mathcal I$ and $\mathcal J$, respectively, such that $\mathcal I \cup \mathcal J = \{ 1, \ldots, r \}$. The systems
	 $\mS(\cI)$ and  $\mS(\mathcal J)$ have strictly less
	than $r$ equations, and therefore, relying on the inductive hypothesis, they can be split further into irreducible
	subsystem using the partitions \[
	  \mathcal I = \mathcal I_1 \cup \ldots \cup \mathcal I_{\ell_1}, \qquad 
	  \mathcal J = \mathcal J_1 \cup \ldots \cup \mathcal J_{\ell_2}.
	\]
	Then, 
	$\{ 1, \ldots, r \} = \mathcal I_1 \cup \ldots \cup \mathcal I_{\ell_1}
	\cup \mathcal J_1 \cup \ldots \cup \mathcal J_{\ell_2}$ yields a decomposition
	into irreducible systems with $\ell := \ell_1 + \ell_2$ components, 
	and this concludes the proof. 
	\end{proof}

Proposition \ref{prop:partition} shows that every system can be split into irreducible systems. 
To determine if a system is nonsingular, it is sufficient to answer the same question for its irreducible components, as stated in the following result.

	\begin{proposition}\label{thm:uniqirre}
	Let $\mS$ be the  
system \eqref{eq:gensystem} with $s$ matrix unknowns, 
and let $\mathcal I_1\cup\cdots\cup\mathcal I_\ell$ be a partition of $\{1,\ldots,r\}$ such that each system $\mS(\cI_j)$ is irreducible, for $j=1,\ldots,\ell$. The system $\mS$ is nonsingular if and only if the system $\mS(\cI_j)$ is nonsingular, for each $j=1,\ldots,\ell$.
	\end{proposition}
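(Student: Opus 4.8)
The plan is to exploit the block structure that irreducibility induces on the linear system associated with $\mS$. Recall that, since all coefficients and unknowns are $n\times n$, the system $\mS$ is equivalent to a square linear system $\mathcal A\, x = b$, where $x$ collects the (vectorized, possibly split into real and imaginary parts in the $\star=\ctop$ case) entries of the unknowns $X_1,\dots,X_s$, and $b$ collects those of the right-hand sides $E_1,\dots,E_r$. Nonsingularity of $\mS$ means exactly that $\mathcal A$ is invertible. The crucial observation is that the $k$-th equation of \eqref{eq:gensystem} only involves the two unknowns $X_{\alpha_k}$ and $X_{\beta_k}$, so row block $k$ of $\mathcal A$ has nonzero entries only in the column blocks corresponding to $X_{\alpha_k}$ and $X_{\beta_k}$.

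First I would observe that if the partition $\cI_1\cup\dots\cup\cI_\ell$ is such that each $\mS(\cI_j)$ is irreducible, then, by the definition of reducibility, the set of unknowns is partitioned accordingly: for each $j$ there is a set $\mathcal U_j\subseteq\{1,\dots,s\}$ of indices of unknowns such that every unknown appearing in an equation of $\mS(\cI_j)$ lies in $\mathcal U_j$, and the $\mathcal U_j$ are pairwise disjoint (and cover all unknowns that actually appear; unknowns appearing in no equation would make the system trivially singular, unless $s$ is interpreted as the number of unknowns that do appear — I would state this assumption explicitly, or note that Proposition~\ref{prop:partition} produces a partition with this property). Indeed this is precisely what it means to ``partition the system into two systems with no common unknowns'': splitting the equation set forces a compatible splitting of the unknown set, and iterating the argument of Proposition~\ref{prop:partition} shows the refined partition into irreducible pieces carries a refined partition of the unknowns.

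Then I would reorder the block rows of $\mathcal A$ so that the rows of $\mS(\cI_1)$ come first, then those of $\mS(\cI_2)$, and so on; and simultaneously reorder the block columns so that the unknowns in $\mathcal U_1$ come first, then $\mathcal U_2$, etc. By the disjointness just established, after this symmetric permutation $\mathcal A$ becomes block diagonal, $\mathcal A = \diag(\mathcal A_1,\dots,\mathcal A_\ell)$, where $\mathcal A_j$ is exactly the coefficient matrix of the linear system associated with $\mS(\cI_j)$ (here I use that $\mS(\cI_j)$ has as many equations as unknowns: since $\mS$ is square overall, $r=s$, and irreducibility forces $|\cI_j| = |\mathcal U_j|$, so each $\mathcal A_j$ is square — a small point worth spelling out). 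A block diagonal matrix is invertible if and only if each diagonal block is invertible, which gives the claimed equivalence: $\mS$ is nonsingular iff every $\mS(\cI_j)$ is nonsingular.

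The main obstacle, and the only genuinely non-routine part, is justifying that $|\cI_j|=|\mathcal U_j|$, i.e.\ that each irreducible component is again a square system. This does not follow from irreducibility alone but from combining it with the global equality $r=s$ and the fact that the unknown sets $\mathcal U_j$ partition $\{1,\dots,s\}$: from $\sum_j |\cI_j| = r = s = \sum_j |\mathcal U_j|$ one only gets the sums agree, not the termwise equality. The right way around this is to recall that nonsingularity of $\mS$ already presupposes $\mathcal A$ square, hence $r=s$; and then to note that if some $|\cI_j|\neq|\mathcal U_j|$, the block $\mathcal A_j$ is rectangular, forcing some other $\mathcal A_{j'}$ to be rectangular the other way, and then $\mathcal A$, being block diagonal with a non-square block, is singular — so in the ``only if'' direction we may assume all blocks square, and in the ``if'' direction squareness of each $\mathcal A_j$ is part of the hypothesis that $\mS(\cI_j)$ is (nonsingular, hence) a well-posed square system. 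I would phrase the argument to sidestep this by working with the permutation-to-block-diagonal-form reduction and invoking invertibility of block diagonal matrices directly, noting squareness where needed.
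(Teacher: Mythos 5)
Your proof is correct, but it follows a route different from the paper's. You pass to the vectorized coefficient matrix $\mathcal A$, observe that the partition of equations into irreducible subsystems induces a disjoint partition of the unknowns, permute rows and columns so that $\mathcal A$ becomes block diagonal with blocks $\mathcal A_j$ corresponding to the subsystems $\mS(\cI_j)$, and invoke invertibility of block-diagonal matrices. The paper instead argues directly at the level of solution sets, never introducing $\mathcal A$: any solution of $\mS$ restricts to a solution of each $\mS(\cI_j)$ and, conversely, solutions of the $\mS(\cI_j)$ glue to one of $\mS$ because the unknown sets are disjoint; two distinct solutions of $\mS$ thus force two distinct solutions of some $\mS(\cI_q)$, and conversely an inconsistent or non-unique $\mS(\cI_q)$ can be combined with solutions of the remaining pieces to produce an inconsistent or non-unique $\mS$. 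The paper's set-theoretic phrasing sidesteps entirely the issue you correctly flag — whether each block $\mathcal A_j$ is square — whereas your matrix argument has to address it explicitly (and does so correctly: a block-diagonal matrix with a rectangular block is singular, and nonsingularity of a subsystem already forces its block to be square). What your approach buys is a very concrete structural picture that aligns naturally with the later algorithmic use (solving the irreducible components independently); what the paper's buys is brevity and freedom from bookkeeping about block sizes. One small imprecision to fix in your write-up: the row and column permutations are not the same, so calling the reduction a ``symmetric permutation'' is inaccurate — it is $P_1 \mathcal A P_2$ for two possibly different permutation matrices, which is of course still enough since permutations preserve invertibility.
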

	\begin{proof}
	We shall show directly that $\mS$ has a unique solution if and only if $\mS(\cI_j)$ has a unique solution for each $j=1,\hdots,\ell$.
Any solution of $\mS$ yields a solution of $\mS(\cI_j)$, for each $j=1,\hdots,\ell$, and viceversa. Let us assume that $\mS$ has two different solutions $(X_1,\ldots,X_s)$ and $(Y_1,\ldots,Y_s)$. Then there exists some $1\leq p\leq s$ such that $X_p\ne Y_p$. If $p\in\cI_q$, for some $1\leq q\leq \ell$, then $\mS(\cI_q)$ has two different solutions, the first one containing $X_p$ and the second one containing $Y_p$. Conversely, if not every system $\mS(\cI_j)$ is nonsingular, then there is some $1\leq q\leq\ell$ such that either $\mS(\cI_q)$ is not consistent or it has two different solutions. In the first case, the whole system $\mS$ would not be consistent either. If $\mS(\cI_q)$ has two different solutions, $(X_1,\hdots,X_{s_q})$ and $(Y_1,\hdots,Y_{s_q})$, and $\mS(\cI_j)$ is consistent, for any $j\neq q$, then we can construct two different solutions of~$\mS$ by completing with $(X_1,\hdots,X_{s_q})$ and $(Y_1,\hdots,Y_{s_q})$, respectively, a solution of the remaining $\mS(\cI_j)$ for $j\neq q$.
	\end{proof}

Finally, we show that for nonsyngular systems, the number of equations and unknowns in each irreducible subsystem is the same.

\begin{proposition} \label{thm:equalirre}
	Let $\mS$ be the system \eqref{eq:gensystem} with $r$ matrix unknowns with size $n\times n$ and let $\mathcal I_1\cup\cdots\cup\mathcal I_\ell$ be a partition of $\{1,\ldots,r\}$ such that each system $\mS(\cI_j)$ is irreducible, for $j=1,\ldots,\ell$. Let $r_j$ and $s_j$ be the number of matrix equations and unknowns, respectively, of $\mS(\cI_j)$. If the system $\mS$ has a unique solution then $r_j=s_j$, for $j=1,\ldots,\ell$.
\end{proposition}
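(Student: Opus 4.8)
The plan is to pass from the system of matrix equations to the equivalent scalar linear system and then count dimensions, after first splitting $\mS$ into its irreducible pieces by means of Proposition~\ref{thm:uniqirre}.

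First I would fix an index $j\in\{1,\dots,\ell\}$ and record that the subsystem $\mS(\mathcal I_j)$ is equivalent to a real linear system. Indeed, $\mS(\mathcal I_j)$ consists of $r_j$ equations of the form in~\eqref{eq:gensystem} in $s_j$ matrix unknowns, each lying in $\C^{n\times n}$; identifying $\C^{n\times n}$ with $\R^{2n^2}$ by splitting real and imaginary parts, each of the maps $X\mapsto AXB$, $X\mapsto CX^{\top}D$ and $X\mapsto CX^{\ctop}D$ is $\R$-linear, so each matrix equation becomes $2n^2$ real linear equations in the $2n^2 s_j$ real coordinates of the unknowns. Hence $\mS(\mathcal I_j)$ is equivalent to a real linear system $M_j z = e_j$ with $M_j\in\R^{(2n^2 r_j)\times(2n^2 s_j)}$, where $z$ collects the coordinates of the $s_j$ unknowns and $e_j\in\R^{2n^2 r_j}$ collects those of the $r_j$ right-hand sides; letting the right-hand sides of $\mS(\mathcal I_j)$ range freely corresponds to letting $e_j$ range over all of $\R^{2n^2 r_j}$.

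Next, since by hypothesis $\mS$ has a unique solution for every right-hand side (that is, $\mS$ is nonsingular in the paper's sense), Proposition~\ref{thm:uniqirre} gives that each $\mS(\mathcal I_j)$ is nonsingular as well; equivalently, $M_j z = e_j$ has a unique solution for every $e_j\in\R^{2n^2 r_j}$. Thus the linear map $z\mapsto M_j z$ from $\R^{2n^2 s_j}$ to $\R^{2n^2 r_j}$ is surjective (every right-hand side is attained) and injective ($\ker M_j=\{0\}$, since the solution is unique), hence bijective. A bijective linear map between finite-dimensional real vector spaces forces equality of dimensions, so $2n^2 s_j = 2n^2 r_j$, i.e.\ $r_j = s_j$; as $j$ was arbitrary, this proves the claim.

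I expect the only subtle point — and the one worth stating carefully — to be the reading of ``$\mS$ has a unique solution'' as ``has a unique solution for every right-hand side'' (the paper's standing meaning of \emph{nonsingular}): this is precisely what delivers surjectivity of $M_j$, hence $r_j\le s_j$, whereas a single consistent right-hand side would only give injectivity, i.e.\ $r_j\ge s_j$, which is not enough. Everything else is routine: the reduction to the irreducible components is exactly Proposition~\ref{thm:uniqirre}, and the final step is elementary linear algebra. (One could alternatively argue directly on $\mS$, using in addition that the irreducible components partition the set of unknowns, so that $\sum_j r_j=r$ and $\sum_j s_j=s$, but invoking Proposition~\ref{thm:uniqirre} keeps the argument cleaner.)
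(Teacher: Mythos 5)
Your proof is correct, and it takes a genuinely different route from the paper's. The paper argues locally only that an irreducible component with a unique solution cannot be underdetermined (so $s_j \le r_j$ — this is the injectivity half of your argument, phrased as ``a kernel-free linear map cannot have more unknowns than equations''), and then closes the gap globally: since the $\mathcal I_j$ partition the equations and the unknowns, $\sum_j r_j = \sum_j s_j = r$, and a single strict inequality $s_p < r_p$ would have to be balanced by some $r_q < s_q$, contradicting the local bound. Your proof instead extracts both injectivity \emph{and} surjectivity of each real linear map $M_j$ from the nonsingularity of $\mS(\mathcal I_j)$ (which Proposition~\ref{thm:uniqirre} delivers), and concludes $r_j = s_j$ componentwise without any counting. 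This is perfectly sound; what the paper's counting argument buys is that the one-sided inequality $s_j \le r_j$ needs only ``some consistent right-hand side with a unique solution,'' whereas your argument genuinely requires the ``for every right-hand side'' strength of nonsingularity to obtain surjectivity — a subtlety you correctly flag and that the paper's phrasing somewhat glosses over. Both proofs are sound, and you also make the scalarization to a real linear system explicit, which cleanly handles the conjugate-transpose case.
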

\begin{proof} If an  
irreducible system with $\widehat r$ equations and $\widehat s$ unknowns has a unique solution, then $\widehat s\le\widehat r$, since otherwise this system, considered as a linear system on the entries of the matrix unknowns, would have more unknowns than equations.

	Now, by contradiction, assume that $r_j\ne s_j$, for some $1\le j\le \ell$. Then, since $\sum_{j=1}^\ell r_j=\sum_{j=1}^\ell s_j=r$, there exists some $1\le p\le \ell$ such that $r_p<s_p$. Thus the system $\mS(\cI_p)$ cannot have a unique solution, and this contradicts Proposition~\ref{thm:uniqirre}.
	\end{proof}
	
The previous results show that, in order to analyze the nonsingularity of a system of $r$ matrix equations in $r$ matrix unknowns, we may assume that the system is irreducible. 

Moreover, Proposition \ref{thm:uniqirre} shows that a first step to compute the unique solution of a system of type \eqref{eq:gensystem} consists in splitting the system into irreducible systems and solving them separately.


\subsection{Reduction to a system where every unknown appears twice}\label{sec:redtotwo}

We consider a nonsingular irreducible system of Sylvester-like equations and we want to prove that the system can be reduced to another one in which each unknown appears exactly twice (and in different equations, when the system has at least two equations). For this purpose, we need the following result.

\begin{theorem}\label{thm:justone} Let $\mS$ be an irreducible system of equations in the form \eqref{eq:gensystem} with $r>1$ equations and unknowns. If the unknown $X_{\alpha_k}$ appears in just one equation, say $A_kX_{\alpha_k}^{s_k}B_k-C_kX_{\beta_k}^{t_k}D_k=E_k$, then $\mS$ is nonsingular if and only if $A_k$ and $B_k$ are invertible and the system $\widetilde{\mS}$ formed by the remaining $r-1$ equations is nonsingular. Moreover $\wt \mS$ is irreducible.
\end{theorem}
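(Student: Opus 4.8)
The plan is to exploit the special structure: the unknown $X_{\alpha_k}$ occurs in exactly one equation, so that equation can be ``solved for $X_{\alpha_k}$'' once we know $X_{\beta_k}$, provided $A_k$ and $B_k$ are invertible. I would first argue the ``only if'' direction: assume $\mS$ is nonsingular. Suppose, for contradiction, that $A_k$ is singular (the case of $B_k$ is symmetric, using $\star$ on the other side); pick $0\neq u$ with $A_k^\top u=0$ (or $u^\star A_k=0$), and observe that the $k$-th equation $A_kX_{\alpha_k}^{s_k}B_k-C_kX_{\beta_k}^{t_k}D_k=E_k$, read as a linear map in the entries of $X_{\alpha_k}$ alone (with all other unknowns frozen), has a nontrivial kernel, because $A_k Z B_k=0$ for every $Z$ of the form $Z^{s_k}=u v^\top$ with suitable $v$ — more precisely, the linear map $Z\mapsto A_k Z B_k$ on $\C^{n\times n}$ is singular whenever $A_k$ or $B_k$ is. Hence one can perturb a hypothetical solution in the $X_{\alpha_k}$ component without affecting any equation, so the solution is not unique (or, dually, the system's coefficient matrix has nontrivial kernel), contradicting nonsingularity. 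So $A_k,B_k$ are invertible; then from the $k$-th equation $X_{\alpha_k}^{s_k}=A_k^{-1}(E_k+C_kX_{\beta_k}^{t_k}D_k)B_k^{-1}$, which determines $X_{\alpha_k}$ uniquely in terms of $X_{\beta_k}$. Substituting this out shows the remaining $r-1$ equations in the remaining unknowns form a system $\widetilde{\mS}$ that must itself have a unique solution (any solution of $\widetilde{\mS}$ extends uniquely to one of $\mS$), i.e. $\widetilde{\mS}$ is nonsingular.

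For the ``if'' direction, assume $A_k,B_k$ invertible and $\widetilde{\mS}$ nonsingular. Then $\widetilde{\mS}$ has a unique solution for the $r-1$ unknowns other than $X_{\alpha_k}$; in particular $X_{\beta_k}$ is determined (note $\beta_k\neq\alpha_k$ since $X_{\alpha_k}$ appears only in equation $k$ and the system has $r>1$ equations, so by irreducibility equation $k$ must share an unknown with the rest, forcing $X_{\beta_k}$ to be among the remaining unknowns). Then the $k$-th equation pins down $X_{\alpha_k}$ uniquely via the displayed formula. Conversely any solution of $\mS$ restricts to a solution of $\widetilde{\mS}$, hence is unique. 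This gives the equivalence. One should also check consistency is preserved in both directions, but that is immediate from the same substitution.

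Finally, irreducibility of $\widetilde{\mS}$: suppose $\widetilde{\mS}$ were reducible, splitting into two subsystems with disjoint unknown sets, one of them containing $X_{\beta_k}$. Reattach equation $k$ and the unknown $X_{\alpha_k}$ to the part containing $X_{\beta_k}$: this yields a partition of $\mS$ into two subsystems with no common unknowns (equation $k$'s only unknowns are $X_{\alpha_k}$ and $X_{\beta_k}$, both in that part, and $X_{\alpha_k}$ appears nowhere else), so $\mS$ would be reducible, a contradiction. Hence $\widetilde{\mS}$ is irreducible. A small edge case to dispatch: if $r=2$, then $\widetilde{\mS}$ has a single equation and is trivially irreducible, and the argument still goes through; and if $\alpha_k=\beta_k$ the $k$-th equation would be an equation in $X_{\alpha_k}$ alone, contradicting that $X_{\alpha_k}$ appears ``in just one equation'' only in the benign sense — but irreducibility with $r>1$ rules this out, so $X_{\alpha_k}$ genuinely disappears from $\widetilde{\mS}$.

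I expect the main obstacle to be the ``only if'' step: making rigorous that singularity of $A_k$ (or $B_k$) forces non-uniqueness requires phrasing the whole system as one large linear map and showing that the block corresponding to $X_{\alpha_k}$ becomes rank-deficient, which relies on the elementary fact that $Z\mapsto A_kZB_k$ (composed with $\star$ if $s_k=\star$, an invertible operation) is a singular linear operator precisely when $A_k$ or $B_k$ is singular — together with the observation that a null vector there can be injected into a global null vector of $\mS$ because no other equation sees $X_{\alpha_k}$. Everything else is bookkeeping with the substitution formula and the definition of (ir)reducibility.
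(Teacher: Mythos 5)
Your proposal is correct and follows essentially the same route as the paper's proof: establish $\beta_k\neq\alpha_k$ from irreducibility, show singularity of $A_k$ or $B_k$ lets you add a rank-one perturbation $vu^\top$ in the $X_{\alpha_k}$ slot to produce a second solution (since no other equation sees $X_{\alpha_k}$), use the explicit substitution $X_{\alpha_k}^{s_k}=A_k^{-1}(E_k+C_kX_{\beta_k}^{t_k}D_k)B_k^{-1}$ to transfer uniqueness between $\mS$ and $\widetilde{\mS}$ in both directions, and argue irreducibility of $\widetilde{\mS}$ by contradiction exactly as the paper does. The step you flagged as the ``main obstacle'' is the same elementary observation the paper uses, and needs no heavier machinery than you sketch.
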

\begin{proof} Note, first, that $\beta_k\neq \alpha_k$, and that the variable $X_{\beta_k}$ appears again in $\widetilde{\mS}$, otherwise $\mS$ would be reducible.
  Suppose first that $\widetilde{\mS}$ is nonsingular and $A_k,B_k$ are invertible. Then, the unique solution of $\mS$ is obtained by first solving $\widetilde{\mS}$ to get the value of all the variables except $X_{\alpha_k}$, and then computing $X_{\alpha_k}$ from
	\begin{equation}\label{eq:alfak}
		X_{\alpha_k}^{s_k}=A_k^{-1}(C_kX_{\beta_k}^{t_k}D_k+E_k)B_k^{-1}.
	\end{equation}
	If $\widetilde{\mS}$ has more than one solution, for $A_k$ and $B_k$ invertible, then \eqref{eq:alfak} produces multiple solutions to $\mS$. If $\widetilde{\mS}$ has no solution, then clearly $\mS$ has no solution either. If $A_k$ is singular, let $v$ be a nonzero vector such that $A_kv=0$; then, given any solution to~\eqref{eq:gensystem} we can replace $X_{\alpha_k}^{s_k}$ with $X_{\alpha_k}^{s_k} + vu^\top$, for any $u\in\mathbb{C}^n$, obtaining a new solution of~\eqref{eq:gensystem}, so $\mS$ does not have a unique solution. A similar argument can be used if $B_k$ is singular.

Moreover, $\widetilde{\mS}$ is irreducible. Otherwise, it could be split in two systems with different unknowns, and just one of them would contain $X_{\beta_k}$; adding the $k$th equation to this last system would give a partition of the original system $\mS$ in two systems with different unknowns.
\end{proof}

The proof of Theorem \ref{thm:justone} shows that, if an irreducible nonsingular system $\mS$ having $r>1$ unknowns contains an unknown appearing just once in $\mS$, then we can remove this unknown, together with its corresponding equation, to get a new irreducible system with $r-1$ equations and $r-1$ unknowns. Notice that the new system may have unknowns appearing just once, that can be removed if $r>2$, using Theorem \ref{thm:justone} again.

This elimination procedure can be repeated as long as the number of equations is greater than one and there is an unknown appearing just once. After a finite number of reductions (using Theorem \ref{thm:justone} repeatedly), we arrive at an irreducible system $\wt{\mathbb S}$, which has the same number $\wt r$ of equations and unknowns and either $\wt r=1$ or no unknown appears in just one equation. In both cases, all unknowns in $\wt{\mathbb S}$ appear just twice.
Moreover, 
$\widetilde\mS$ is nonsingular. Therefore, we can focus, from now on, on irreducible systems with the same number of equations and unknowns, and where each unknown appears exactly twice.

\subsection{Reduction to a periodic system with at most one $\star$}

In Section~\ref{sec:redtotwo} we have proved that, without loss of generality, and regarding nonsingularity, we can consider irreducible systems of $r$ Sylvester-like equations with $r$ matrix unknowns, any of which appearing just twice. Now, we want to show that from any system of the latter form, we can get an equivalent periodic system of the form \eqref{eq:periodic}.

We first note that, by renaming the unknowns if necessary, under these assumptions the system \eqref{eq:gensystem} can be written as
	\begin{equation} \label{eq:generalized_sylvester}
	\left\{\begin{array}{cccc}
			A_k X_k^{s_{k}} B_k - C_k X_{k+1}^{t_{k}} D_k &=& E_k,& k = 1, \ldots, r-1,\\
			A_r X_r^{s_{r}} B_r - C_r X_1^{t_{r}} D_r& =& E_r,\\
			\end{array}\right.
	\end{equation}
 where $s_k, t_k\in\{1,\star\}$. A way to show this is as follows. Let us start with $X_1$ and choose one of the two equations containing this unknown (there are at least two as long as the system contains at least two equations). Let this equation, with appropriate relabeling of the coefficients if needed, be $A_1X_1^{s_1}B_1-C_1X_{\alpha_1}^{t_1}D_1=E_1$. Now we look for the other equation containing $X_{\alpha_1}$. With a relabeling of the coefficients if needed, this equation is $A_2X_{\alpha_1}^{s_2}B_2-C_2X_{\alpha_2}^{t_2}D_2=E_2$, and we proceed in this way with $X_{\alpha_2}$ and so on with the remaining unknowns. Note that, during this process, it cannot happen that $\alpha_i=\alpha_j$ for $i\neq j$, since otherwise $X_{\alpha_i}$ would appear more than twice in the system. Therefore, at some point we end up with $\alpha_t=1$. If there were some $1\leq j\leq r$ such that $j\neq\alpha_i$, for all $i=1,\hdots,t$, then the system would be reducible. Hence, it must be $t=r$ and, by relabeling the unknowns as $\alpha_k=k+1$, for $k=1,\hdots,r-1$, and $\alpha_r=1$, we get the system in the form \eqref{eq:generalized_sylvester}.
 
 We now show that each periodic irreducible system of the form~\eqref{eq:generalized_sylvester} can be reduced to the simpler form \eqref{eq:periodic}, with at most one $\star$. This can be obtained by applying a sequence of $\star$ operations and renaming of variables, without further linear algebraic manipulations. This is stated in the following result.
	
	\begin{lemma}
		\label{lem:reduction-only-one-star}
		Given the system of generalized $\star$-Sylvester equations~\eqref{eq:generalized_sylvester}, there exists a system of the type
		\begin{equation}\label{eq:periodic2}
			\left\{\begin{array}{cccc}
			\wt A_k Y_k \wt B_k - \wt C_k Y_{k+1} \wt D_k &= &\wt E_k, & k = 1, \ldots, r-1, \\
			\wt A_r Y_r \wt B_r - \wt C_r Y_{1}^s \wt D_r &=& \wt E_r,
			\end{array}\right.
		\end{equation}
 		with $s\in\{1,\star\}$, and $u_k\in\{1,\star\}$, for $k=1,\ldots,r$, such that $Y_1,\ldots,Y_r$ is a solution of~\eqref{eq:periodic2} if and only if $X_1,\ldots,X_r$, with $X_k = Y_k^{u_k}$, is a solution of~\eqref{eq:generalized_sylvester}.
 		
	Moreover, $s=1$ if the number of $\star$ symbols appearing among $s_i,t_i$ in the original system~\eqref{eq:generalized_sylvester} is even, and $s=\star$ if it is odd.
	\end{lemma}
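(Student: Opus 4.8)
The plan is to work one unknown at a time, moving from $X_1$ up to $X_r$, and at each step either leave the current unknown alone or replace it by its $\star$-transform, while simultaneously applying the $\star$ operator to the equation in which it currently appears in ``transposed'' position. The key observation is that applying $\star$ to both sides of a generalized Sylvester-type equation turns $AX^sB - CX^tD = E$ into $B^\star (X^s)^\star A^\star - D^\star (X^t)^\star C^\star = E^\star$; since $(X^\star)^\star = X$, this has the effect of toggling the exponents $s$ and $t$ between $1$ and $\star$, at the cost of transposing and swapping the coefficient pairs $(A,B)\leftrightarrow(B^\star,A^\star)$ and $(C,D)\leftrightarrow(D^\star,C^\star)$, and replacing $E$ by $E^\star$. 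Thus ``normalizing'' the exponent on a chosen occurrence of an unknown is a reversible bookkeeping operation that does not alter solvability.

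Concretely, I would set up an induction on $k = 1, \ldots, r-1$ with the loop invariant that after step $k$ the first $k$ equations have the form $\wt A_j Y_j \wt B_j - \wt C_j Y_{j+1} \wt D_j = \wt E_j$ (no $\star$), and that there is a fixed list $u_1, \ldots, u_k \in \{1, \star\}$ recording how the new variables relate to the old ones via $X_j = Y_j^{u_j}$. To process equation $k$: its left occurrence is $Y_k$ with some exponent $s_k'$ (which, by the invariant applied at the previous step, is already $1$ for $k > 1$, and for $k=1$ we simply set $u_1$ to make it $1$); its right occurrence is $X_{k+1}$ with exponent $t_k'$. If $t_k' = 1$, do nothing and set $u_{k+1} = u_k$-compatible so that $Y_{k+1} = X_{k+1}$ in the current naming. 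If $t_k' = \star$, record that the new variable $Y_{k+1}$ equals the $\star$-transform of the old one (adjust $u_{k+1}$ accordingly), and in the next equation — the unique other equation containing this unknown, which is equation $k+1$ by the periodic structure of~\eqref{eq:generalized_sylvester} — substitute; if that makes the left occurrence in equation $k+1$ carry a $\star$, apply the $\star$ operation to equation $k+1$ as described above to clear it. This keeps the invariant. After $k = r-1$ steps all of the first $r-1$ equations are clean; the last equation then has $Y_r$ (exponent $1$) on the left and $Y_1$ with some exponent $s \in \{1,\star\}$ on the right, which is exactly~\eqref{eq:periodic2}.

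For the parity claim, I would track the total number of $\star$ symbols among the exponents as a $\mathbb{Z}/2$-valued quantity. Renaming a variable $X_j = Y_j^{u_j}$ with $u_j = \star$ flips the exponent on each of the (exactly two) occurrences of that variable, hence changes the count by an even number; applying the $\star$ operation to an equation toggles both of its two exponents, again an even change; and the final substitution that merges a variable rename into an equation is precisely the combination of these. Therefore the parity of the number of $\star$'s is invariant throughout the reduction. In the final system~\eqref{eq:periodic2} all exponents are $1$ except possibly the single $s$ in the last equation, so the total parity equals $0$ if $s = 1$ and $1$ if $s = \star$; comparing with the initial parity gives the stated dichotomy.

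The main obstacle is purely organizational rather than mathematical: one must be careful that the chain $X_1 \to X_{\alpha_1} \to X_{\alpha_2} \to \cdots$ used to bring the system into the cyclic form~\eqref{eq:generalized_sylvester} really does visit each unknown exactly once (this is where irreducibility and the ``appears exactly twice'' hypothesis are used, as already spelled out in the paragraph preceding the lemma in the excerpt), and that when we ``apply $\star$ to equation $k+1$'' we have not yet touched its \emph{right} occurrence, so that no conflict arises — i.e., the clearing operations must be applied in the correct left-to-right order so each equation is only ever modified once from the left and once from the right. Making the loop invariant precise enough to guarantee this, and checking that the last equation closes up consistently (its right occurrence $Y_1$ may or may not pick up a $\star$, and that is exactly the freedom captured by $s$), is the only delicate point; everything else is substitution and the elementary identity $(X^\star)^\star = X$.
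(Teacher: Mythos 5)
Your proposal is correct and follows essentially the same approach as the paper's proof: a left-to-right sweep over the equations, clearing the $\star$ exponents by a combination of variable renamings $X_k = Y_k^{u_k}$ (which flip the exponent on both occurrences of $X_k$) and $\star$-transpositions of whole equations (which flip both exponents in that equation and swap $(A,B)\leftrightarrow(B^\star,A^\star)$, $(C,D)\leftrightarrow(D^\star,C^\star)$), combined with the observation that both operations preserve the parity of the total number of $\star$'s. The only cosmetic difference is in the bookkeeping at the endpoints: the paper fixes $u_1=1$ and instead applies $\star$ to the first equation if $s_1=\star$, and it handles the last equation by a separate four-case analysis, while you absorb both of these into the choice of $u_1$ and into the ``preemptive clearing'' of equation $k+1$ during step $k$; these variants are interchangeable and lead to the same conclusion.
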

\begin{proof}	
		The proof of this result is constructive, i.e., it is presented in an algorithmic way that produces the system \eqref{eq:periodic2} from \eqref{eq:generalized_sylvester} by a sequence of transpositions and substitutions of the type $Y_k=X_k^{u_k}$, from which the statement follows.	
			
		The procedure has $r$ steps. At the first step we consider the first equation. If $s_1=\star$ then we apply the $\star$ operator to both sides of the equation, obtaining a new equivalent equation with no star on the first unknown:
		\[
			A_1 X_1^{\star} B_1 - C_1 X_2^{t_1} D_1 = E_2 \iff 
			B_1^\star X_1 A_1^\star - D_1^\star (X_2^{t_1})^{\star} C_1^\star = E_1^\star. 
		\]
We set $Y_1=X_1$ and $(\wt A_1,\wt B_1,\wt C_1,\wt D_1,\wt E_1)=(B_1^{\star},A_1^{\star},
			D_1^{\star},C_1^{\star},E_1^{\star})$. If $s_1=1$, then we set $Y_1=X_1$ as well and $(\wt A_1,\wt B_1,\wt C_1,\wt D_1,\wt E_1)=(A_1,B_1,C_1,D_1,E_1)$. In both cases, $u_1=1$ and the first equation has been replaced by $\wt A_1 Y_1\wt B_1-\wt C_1(X_2^{t_1})^{s_1}\wt D_1=\wt E_1$. Notice that, for $r=1$, we get an equivalent periodic system of the type \eqref{eq:periodic2} and then we are done.

If $r>1$, then we continue the first step of the procedure and check the second unknown of the first equation, namely $(X_2^{t_1})^{s_1}$, that can be $X_2$ or $X_2^{\star}$. If the second unknown is $X_2$, then we set $Y_2=X_2$ and $u_2=1$, otherwise we set $Y_2=X_2^\star$ and $u_2=\star$. In both cases we get an equation of the type
$\wt A_1 Y_1\wt B_1-\wt C_1Y_2\wt D_1=\wt E_1$, with no $\star$ in the unknowns. Replacing $X_2$ by $Y_2^{u_2}$ also in the second equation we get a system equivalent to \eqref{eq:generalized_sylvester} but with no $\star$ in the first equation.

The procedure can be repeated for the remaining equations. The second step works on the second equation, that now is of the form $A_2(Y_2^{u_2})^{s_2}B_2-C_2X_3^{t_2}D_2=E_2$. If $(Y_2^{u_2})^{s_2} = X_2$, then we can take $(\wt A_2, \wt B_2, \wt C_2, \wt D_2, \wt E_2) = (A_2, B_2, C_2, D_2, E_2)$; otherwise, $(Y_2^{u_2})^{s_2} = X_2^\star$, so we apply the operator $\star$ to the second equation, obtaining an equivalent one, and hence set $(\wt A_2, \wt B_2, \wt C_2, \wt D_2, \wt E_2) = (B_2^\star, A_2^\star, D_2^\star, C_2^\star, E_2^\star)$. Then we check if the other unknown appearing in the resulting equation is $X_3$ or $X_3^\star$, and proceed analogously. After $r-1$ steps we arrive at the last equation, which is of the form $A_rX_r^{s_r}B_r-C_rX_1^{t_s}D_r=E_r$, with $X_1=Y_1$ and either $X_r=Y_r$ or $X_r=Y_r^\star$. Therefore, there are four possible cases
\begin{eqnarray}
A_rY_rB_r-C_rY_1D_r&=&E_r,\label{case1}\\
A_rY_rB_r-C_rY_1^\star D_r&=&E_r,\label{case2}\\
A_rY_r^\star B_r-C_rY_1D_r&=&E_r,\label{case3}\\
A_rY_r^\star B_r-C_rY_1^\star D_r&=&E_r.\label{case4}
\end{eqnarray}
Cases \eqref{case1} and \eqref{case2} are already in the form required in \eqref{eq:periodic2}. For case \eqref{case3} we apply the $\star$ operator to this equation and arrive at
\[
\widetilde A_r Y_r \widetilde B_r-\widetilde C_r Y_1^\star \wt D_r=\widetilde E_r,
\]
with $(\wt A_r, \wt B_r, \wt C_r, \wt D_r, \wt E_r) = (B_r^\star, A_r^\star, D_r^\star, C_r^\star, E_r^\star)$, and in case \eqref{case4} we apply again the $\star$ operator to this equation and we get
\[
\widetilde A_r Y_r \widetilde B_r-\widetilde C_r Y_1 \wt D_r=\widetilde E_r,
\]
with $(\wt A_r, \wt B_r, \wt C_r, \wt D_r, \wt E_r) = (B_r^\star, A_r^\star, D_r^\star, C_r^\star, E_r^\star)$, as above. Therefore, in all cases we arrive at a system \eqref{eq:periodic2}.

Each of the transformations performed by the algorithm preserves the parity of the number of $\star$ symbols appearing within the equations, since each change of variables may swap the exponent, from $\star$ to $1$ or vice versa, in the two appearances of each unknown. Therefore, the second part of the statement follows.
\end{proof}

\begin{algorithm}
	\caption{Transformation of a periodic system into a system with just one $\star$. Vectors $s$ and $t$ contain the transpositions in the
		original system. The procedure
	returns the new coefficients, the vector $u$ so that $Y_k = X_k^{u_k}$, 
	and the symbol $t_r$ on $X_{r+1} = X_1$ 
	in the last equation (which is the only 
	entry in both $s$ and $t$ that could be a $\star$ after the procedure).}
	\begin{algorithmic}[1]
		\Procedure{GenerateSystem}{$A_k, B_k, C_k, D_k, E_k,s,t$}
			\State{$u_1\gets 1$}\Comment{$u_1$ is always $1$, 
			  since $Y_1 = X_1$}
		\For{$k = 1,\ldots,r$}
			\If{$s_k=1$}
			\State {$(\wt A_k,\wt B_k,\wt C_k,\wt D_k,\wt E_k)		
			\gets (A_k,B_k,C_k,D_k,E_k)$}
			\Else
			\State {$(\wt A_k,\wt B_k,\wt C_k,\wt D_k,\wt E_k)
			\gets (B_k^{\star},A_k^{\star},D_k^{\star},C_k^{\star},E_k^{\star})$}
			\State {Swap $t_k$}\Comment{Swap the value of $t_k$ between $1$ and $\star$}
			\EndIf
			\If{$k<r$}
			\State {$u_{k+1}\gets t_k$}
			\If{$t_k=\star$}
			\State {Swap $s_{k+1}$}\Comment{Swap the value of $s_{k+1}$ between $1$ and $\star$}
			\EndIf
			\EndIf
		\EndFor
		\State \Return $\wt A_k,\wt B_k,\wt C_k,\wt D_k,\wt E_k,u,t_r$
		\EndProcedure
	\end{algorithmic}
	\label{alg:1}
\end{algorithm}	

	The above results show that we can reduce the problem on the nonsingularity of \eqref{eq:gensystem} either to the problem of the nonsingularity of a periodic system
	of $r$ generalized Sylvester equations or to the problem of the nonsingularity of a periodic system of $r-1$ generalized Sylvester
	and one generalized $\star$-Sylvester equation. 


\section{Reduction to a block triangular linear system}\label{sec:red2}

In Section~\ref{sec:red1} we have seen how a nonsingular system of general type \eqref{eq:gensystem} can be reduced to one or more independent periodic systems of the type \eqref{eq:periodic}, where all equations are generalized Sylvester equations except the last one, that is either a generalized Sylvester or a generalized $\star$-Sylvester equation.

Here we focus on a periodic system of type \eqref{eq:periodic}. First, we show in Section~\ref{sec:triangular} that it can be transformed into an equivalent periodic system with triangular coefficients. Then, in Section~\ref{sec:backsubstitution} we show that, in the cases $s=1$ and $s=\top$, the latter system is a linear system whose coefficient matrix is block triangular with diagonal blocks of order $r$ or $2r$. Finally, in Section~\ref{sec:linearizing} we show that the case $s=\ctop$ can be reduced to the case $s=1$.

The reduction to a special linear system allows one to deduce useful conditions for the nonsingularity of a system of generalized Sylvester equations and, moreover, to design an efficient numerical algorithm for its solution.


	\subsection{Reduction to a system with triangular coefficients} \label{sec:triangular}

	We can multiply by suitable unitary matrices and perform a change of variables on the system \eqref{eq:periodic} which simultaneously make the matrices $A_k, B_k, C_k, D_k$ upper or lower (quasi-)triangular.
	
	\begin{lemma}\label{thm:lemtriang}
		There exists a change of variables of the form $\widehat{X}_k = Z_k^\ctop X_k \widehat{Z}_k$, with $Z_k,\widehat{Z}_k\in\mathbb{C}^{n\times n}$ unitary, for $k=1,2,\dots,r$, which simultaneously makes the coefficients $A_k,C_k$ of \eqref{eq:periodic} upper triangular, and the coefficients $B_k,D_k$ lower triangular, after pre-multiplying and post-multiplying the $k$th equation by appropriate unitary matrices $Q_k$ and $\wh Q_k$, respectively.
	\end{lemma}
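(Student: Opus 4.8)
The plan is to apply the periodic Schur decomposition (Theorem~\ref{thm:periodicschur}) twice, once to the sequence of pairs $(A_k, C_k)$ and once to the sequence of pairs $(B_k, D_k)$, reading the second sequence in the reverse order so that the ``cyclic shift'' built into the periodic Schur form matches the structure of the system \eqref{eq:periodic}. Concretely, first I would invoke Theorem~\ref{thm:periodicschur} with $M_k := A_k$ and $N_k := C_k$ to obtain unitary matrices $Q_k, Z_k$ with $Q_k^\ctop A_k Z_k =: \widehat A_k$ upper triangular and $Q_k^\ctop C_k Z_{k+1} =: \widehat C_k$ upper triangular, where $Z_{r+1} = Z_1$. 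This is exactly the left-multiplication by $Q_k^\ctop$ and right-multiplication by $Z_k$ (respectively $Z_{k+1}$) on the two terms $A_k X_k B_k$ and $C_k X_{k+1} D_k$: to keep the product $A_k X_k B_k$ unchanged in form we must insert $Z_k Z_k^\ctop$ between $A_k$ and $X_k$, which forces the change of variables $\widehat X_k = Z_k^\ctop X_k \widehat Z_k$ for some unitary $\widehat Z_k$ still to be chosen, and similarly the term $C_k X_{k+1} D_k$ becomes $\widehat C_k (Z_{k+1}^\ctop X_{k+1} \cdots) D_k$, which is consistent with the \emph{same} substitution $\widehat X_{k+1} = Z_{k+1}^\ctop X_{k+1} \widehat Z_{k+1}$ precisely because of the cyclic condition $Z_{r+1}=Z_1$ (this is where the last, possibly $\star$-ed, equation needs a small separate check, but with $s=1$ it is immediate).

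Next I would handle the right factors $B_k, D_k$. Here the roles are swapped: in the $k$th equation $B_k$ multiplies $X_k$ from the right and $D_k$ multiplies $X_{k+1}$ from the right, so the ``shift'' goes the other way, and I would apply Theorem~\ref{thm:periodicschur} to the transposed (or conjugate-transposed) sequence, or equivalently to the sequence $M_k := B_k^\ctop$, $N_k := D_k^\ctop$ taken in reversed index order, so as to produce unitary $\widehat Z_k$ (matching the ones left free above) and $\widehat Q_k$ with $\widehat Z_k^\ctop B_k \widehat Q_k$ and (shifted appropriately) $\widehat Z_{k+1}^\ctop D_k \widehat Q_k$ both \emph{lower} triangular — one gets lower triangular rather than upper triangular simply because conjugate-transposing an upper triangular matrix yields a lower triangular one. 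Assembling the two decompositions, the $k$th equation is pre-multiplied by $Q_k^\ctop$ and post-multiplied by $\widehat Q_k$, the unknowns are replaced by $\widehat X_k = Z_k^\ctop X_k \widehat Z_k$, and the new coefficients $\widehat A_k, \widehat C_k$ are upper triangular while $\widehat B_k, \widehat D_k$ are lower triangular, as claimed; the right-hand sides $E_k$ are simply replaced by $Q_k^\ctop E_k \widehat Q_k$, which does not affect nonsingularity.

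The main obstacle is purely bookkeeping: making sure that the two applications of the periodic Schur theorem are compatible, i.e. that the matrices called $\widehat Z_k$ that appear as the ``right'' unitaries in the first application are the same as those appearing as the ``left'' unitaries in the second. This is not automatic from Theorem~\ref{thm:periodicschur} as stated, because each application produces its own pair of unitary sequences; the correct way around it is to apply the periodic Schur theorem only once, to a single sequence of $2r$ (or $r$, in block form) matrix pairs cleverly interleaving the $(A_k,C_k)$ data with the $(B_k^\ctop, D_k^\ctop)$ data, so that the shared unitaries $\widehat Z_k$ are produced simultaneously and consistently. I expect the cleanest exposition to set this up as: apply Theorem~\ref{thm:periodicschur} to $M_k = A_k$, $N_k = C_k$ to get $Q_k, Z_k$; then \emph{separately} apply it to $M_k = B_{r+1-k}^\ctop$, $N_k = D_{r-k}^\ctop$ (indices mod $r$) to get $\widehat Q_k, \widehat Z_k$; and finally verify by direct substitution that with $\widehat X_k = Z_k^\ctop X_k \widehat Z_k$ all four transformed coefficient sequences have the stated triangular shapes and the cyclic boundary condition closes up. The only genuine subtlety, deferred to the treatment of the $s=\star$ case elsewhere, is that when the last equation carries a $\star$ the substitution on $X_1$ interacts with the transpose, but for the statement as given (which allows lower/upper quasi-triangular and does not distinguish $s$) the argument above suffices.
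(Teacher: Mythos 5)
Your two-separate-Schur-decompositions plan is exactly the paper's argument for the case $s=1$, but you have both invented a nonexistent obstacle there and omitted the genuine one for $s=\star$.

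For $s=1$ there is no compatibility constraint to worry about. In the change of variables $\widehat X_k = Z_k^\ctop X_k \widehat Z_k$, the matrices $Z_k$ act on the left of $X_k$ and $\widehat Z_k$ on the right; they come from two independent applications of Theorem~\ref{thm:periodicschur}, one to the pairs $(A_k,C_k)$ (producing $Q_k,Z_k$) and one to the pairs $(B_k^\ctop,D_k^\ctop)$ (producing $\widehat Q_k,\widehat Z_k$). Writing the transformed $k$th equation as
\[
\bigl(Q_k^\ctop A_k Z_k\bigr)\bigl(Z_k^\ctop X_k \widehat Z_k\bigr)\bigl(\widehat Z_k^\ctop B_k \widehat Q_k\bigr)-\bigl(Q_k^\ctop C_k Z_{k+1}\bigr)\bigl(Z_{k+1}^\ctop X_{k+1} \widehat Z_{k+1}\bigr)\bigl(\widehat Z_{k+1}^\ctop D_k \widehat Q_k\bigr)=Q_k^\ctop E_k \widehat Q_k,
\]
one sees that the four unitary sequences never interact, so the two Schur decompositions can be taken completely independently; each $\widehat Z_k$ appears only once, not as ``the right unitaries in the first application and the left unitaries in the second,'' and there is nothing to match up. Your proposed remedy---a single $2r$-factor periodic Schur decomposition interleaving the $(A_k,C_k)$ data with the $(B_k^\ctop,D_k^\ctop)$ data---is not needed here and is in fact the paper's technique for the other case.

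The real subtlety, which you explicitly defer and then dismiss, is the case $s=\star$, and that is where your plan breaks. In the last equation the substitution gives $X_1^\star=(\widehat Z_1^\ctop)^\star\widehat X_1^\star Z_1^\star$, so after pre- and post-multiplication the coefficient of $\widehat X_1^\star$ on the left is $Q_r^\ctop C_r (\widehat Z_1^\ctop)^\star$, not $Q_r^\ctop C_r Z_1$. For this to be upper triangular one needs $(\widehat Z_1^\ctop)^\star=Z_1$, i.e.\ $\widehat Z_1=Z_1$ when $\star=\ctop$ and $\widehat Z_1=\overline{Z_1}$ when $\star=\top$. That genuinely links the two unitary sequences and cannot be arranged by applying Theorem~\ref{thm:periodicschur} twice independently. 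The paper resolves it precisely as you vaguely anticipated: it applies the periodic Schur theorem once to the single $2r$-factor formal product $D_r^{-s}B_r^s\dotsm D_1^{-s}B_1^s\,C_r^{-1}A_r\dotsm C_1^{-1}A_1$, obtaining unitaries $Q_1,\dots,Q_{2r}$ and $Z_1,\dots,Z_{2r}$ with $Z_{2r+1}=Z_1$, and then defines $\widehat Z_k:=(Z_{r+k}^s)^\ctop$; the cyclic boundary condition $Z_{2r+1}=Z_1$ then supplies exactly the required linking relation. Since Lemma~\ref{thm:lemtriang} refers to system~\eqref{eq:periodic}, which includes $s=\star$, your concluding claim that the $s=1$ argument ``suffices for the statement as given'' is incorrect: the $s=\star$ construction is an essential part of the proof, and it is the part your plan leaves unproved.
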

	\begin{proof} 
		We distinguish the cases $s = 1$ and
		$s \in \{ \top, \ctop\}$. For both cases, we provide an appropriate
		change of variables to take the system in upper/lower triangular form, 
		based on the periodic Schur form of certain formal matrix products  (see Section~\ref{sec:prel}). 
		\begin{description}
			\item[Case $s = 1$] is already treated in \cite{byers-rhee}; we
			   report it here for completeness. Let 
			   \[
			   Q_k^\ctop A_k Z_k = \widehat{A}_k, \quad Q_k^\ctop C_k Z_{k+1} = \widehat{C}_k,\quad Z_{r+1}=Z_1,\quad k=1,2,\ldots,r,
			   \]
			   with $\widehat{A}_k,\widehat{C}_k$ upper triangular, be a periodic Schur form of 
			   $C_r^{-1}A_rC_{r-1}^{-1}A_{r-1}\dotsm C_1^{-1}A_1$, and 
			  \[
			  \widehat{Q}_k^\ctop B^\ctop_k \widehat{Z}_k = \widehat{B}^\ctop_k, \quad \widehat{Q}_k^\ctop D_k^\ctop \widehat{Z}_{k+1} = \widehat{D}_k^\ctop,\qquad Z_{r+1}=Z_r,\qquad k=1,2,\ldots,r,
			  \]
			  with $\widehat{B}_k^\ctop,\widehat{D}_k^\ctop$ upper triangular, be a periodic Schur form of 
			  $D_r^{-\ctop}B_r^\ctop D_{r-1}^{-\ctop}B_{r-1}^\ctop\dotsm D_1^{-\ctop}B_1^\ctop$. Setting $\widehat{X}_k = Z_k^\ctop X_k \widehat{Z}_k$ and
			  multiplying the equations in \eqref{eq:periodic} by $Q_k^\ctop$ from the left, 
			  and by $\widehat {Q}_k$ from the right yields a transformed 
			  system of equations with unknowns $\widehat X_k$ and 
			  upper/lower triangular coefficients, as claimed. 
		\item[Case $s \in \{\ctop,\top\}$] 
		  can be handled by considering the
		periodic Schur form \begin{align*}
		Q_k^\ctop A_k Z_k &= \widehat{A}_k, & Q_k^\ctop C_k Z_{k+1} &= \widehat{C}_k, & Z_{2r+1}&=Z_1,\\
		Q_{r+k}^\ctop B^s_k Z_{r+k} &= \widehat{B}^s_k, & Q_{r+k}^\ctop D_k^s Z_{r+k+1} &= \widehat{D}_k^s, & k&=1,2,\dots,r,
		\end{align*}
		of 
		$
		D_r^{-s}B_r^s D_{r-1}^{-s}B_{r-1}^s\dotsm D_1^{-s}B_1^s C_r^{-1}A_rC_{r-1}^{-1}A_{r-1}\dotsm C_1^{-1}A_1.
		$
		
		Performing the change of variables $\widehat X_k = Z_k^\ctop X_k (Z_{r+k}^s)^\ctop$ 
		and multiplying the equations in \eqref{eq:periodic} by $Q_k$ on the left and by
		$(Q_{r+k}^s)^\ctop$ on the right
		yields a system with upper/lower triangular coefficients
		in the unknowns $\widehat{X}_k$. Note that, for any matrix $M$, 
		$(M^s)^\ctop$ is equal to $M$ if $s = \ctop$ and $\overline M$ (the
		complex conjugate) if $s = \top$. 
		\end{description}
	\end{proof}
	

	\subsection{Reduction to a block upper triangular linear system for $s=1, \top$} \label{sec:backsubstitution}
	
A system like \eqref{eq:periodic} can be seen as a system of $n^2r$ equations in $n^2r$ unknowns in terms of the entries of the unknown matrices. This is a linear system for $s=1$ or $s=\top$, while in the case $s=\ctop $ it is not linear over $\C$ due to the conjugation. Nevertheless, it can be either transformed into a linear system over $\R$, by splitting the real and imaginary parts of both the coefficients and the unknowns (see Section~\ref{sec:alg-backsubstitution}), or into a linear system over $\C$ by doubling the size (see Section~\ref{sec:linearizing}).

A standard approach to get explicitly the matrix coefficient of the (linear) system associated with a system of Sylvester-like equations is to exploit the relation  $\vecop(A X B) = (B^\top \otimes A) \vecop{X}$ \cite[Lemma 4.3.1]{hj-topics}
where the $\vecop(\cdot)$ operator maps a matrix into the vector obtained by stacking its columns one on top of the other, and $A\otimes B$ is the Kronecker product of $A$ and $B$, namely the block matrix with blocks of the type $[a_{ij}B]$ (see \cite[Ch. 4]{hj-topics}). 

	Relying on the reduction scheme that we have presented in
	Section~\ref{sec:triangular}, 
	we may assume that the coefficients $A_k, C_k$, and $B_k, D_k$, in~\eqref{eq:periodic}
are upper and lower triangular matrices, respectively.  In this case the matrix of the linear system obtained after applying the $\vecop(\cdot)$ operator has a nice structure;{ }
 indeed, performing appropriate row and column permutations to the matrix (in other words, choosing an appropriate ordering of the unknowns), in Section~\ref{sec:transposepermutation}, we get a block upper triangular coefficient matrix, with diagonal blocks of dimensions $r$ or $2r$.

In the case where $s=1$, a characterization for nonsingularity was obtained in \cite{byers-rhee} (see Theorem~\ref{thm:byersrhee}). The approach followed in that reference is similar to the one we follow here. 
	
	We first deal with the cases $s\in \{ 1, \top \}$, which are 
	both linear, and for which we can directly give conditions based
	on the matrix representing the linear system in the entries of the unknowns. This 
	is the aim of Section~\ref{sec:transposepermutation}. 
	The case $s = \ctop $ can be reduced to the case $s=1$ by using specific developments which are contained in Section~\ref{sec:linearizing}.
	

\subsubsection{Making the matrix coefficient block triangular} \label{sec:transposepermutation}

We assume that $A_k,C_k$ are upper triangular and $B_k,D_k$ are lower triangular, for $k=1,\hdots,r$.

Using the relation $\vecop(A X B) = (B^\top \otimes A) \vecop{X}$ we can rewrite the system~\eqref{eq:periodic}, for the case $s = 1$, with $r>1$, as the linear system
	\begin{equation}\label{eq:1matrix}
		\begin{bmatrix}
			B_{1}^\top \otimes A_1 & - D_{1}^\top \otimes C_1 \\
			& \ddots  & \ddots\\
			&& B_{r-1}^\top \otimes A_{k-1} & - D_{r-1}^\top \otimes C_{r-1} \\
			-D_{r}^\top \otimes C_{r} &&& B_{r}^\top \otimes A_{r} \\
		\end{bmatrix} {\cal X}  ={\cal E},
	\end{equation} 
	where the empty block entries should be understood as zero blocks, and 
		\[
			{\cal X} := \begin{bmatrix}
						\vecop X_1 \\
						\vdots \\
						\vecop X_r
					\end{bmatrix}, \qquad 
			{\cal E} := \begin{bmatrix}
								\vecop E_1 \\
								\vdots \\
								\vecop E_r
							\end{bmatrix}.
		\]

	In the case $s = \top$, with $r>1$, we have, instead
	\begin{equation}\label{eq:Tmatrix}
		\begin{bmatrix}
			B_{1}^\top \otimes A_1 & - D_{1}^\top \otimes C_1 \\
			& \ddots & \ddots \\
			&& B_{r-1}^\top \otimes A_{k-1} & - D_{r-1}^\top \otimes C_{r-1} \\
			- (D_{r}^\top \otimes C_{r}) P_{n,n} &&& B_{r}^\top \otimes A_{r} \\
		\end{bmatrix} {\cal X}  ={\cal E},
	\end{equation}
	where $P_{a,b}$ denotes the \emph{commutation matrix}, i.e., the permutation
	matrix such that $P_{a,b} \vecop{X} = \vecop (X^\top)$ for each $X\in\mathbb{R}^{a\times b}$ \cite[Th. 4.3.8]{hj-topics}. 
	
	In the case $r=1$, the system is $(B_1^\top\otimes A_1-D_1^\top\otimes C_1){\cal X}  ={\cal E}$ for $s=1$ and   $(B_1^\top\otimes A_1-(D_1^\top\otimes C_1)P_{n,n}){\cal X}  ={\cal E}$ for $s=\top$.
	
	In the following,
	we index the components of $\cal X$ by means of the triple 
	$(i,j,k)$, that denotes the $(i,j)$ entry of $X_k$. 
	This is just a shorthand for the component $(k-1)n^2 + (j-1)n + i$ of $\cal X$. 
	Notice that each coordinate of any of the systems \eqref{eq:1matrix} and \eqref{eq:Tmatrix} can be obtained by multiplying 
	one of the $r$ equations of \eqref{eq:periodic} by $e_i^\top$ on the
	left and by $e_j$ on the right, for appropriate $1\leq i,j\leq n$.
	
	We are interested in performing a permutation
	on systems \eqref{eq:1matrix} and \eqref{eq:Tmatrix} that takes them to block upper triangular form (independently
	on the presence of the permutation matrix $P_{n,n}$). The next
	Lemma shows that this is always possible. 
	 \begin{lemma} \label{lem:orderingtriangular} Let $A_k,C_k$ be $n\times n$ upper triangular matrices and $B_k,D_k$ be $n\times n$ lower triangular matrices, for $k=1,\hdots,r$.
 	Let $\mS$ be the system of $n^2r$ equations
	\begin{equation}\label{eq:eiej}
\left\{	 \begin{array}{cccc}e_i^\top (A_k X_k B_k - C_k X_{k+1} D_k) e_j& =& (E_k)_{ij}, & i,j = 1, \ldots, n, \ k = 1, \ldots, r-1,\\
	 e_i^\top(A_rX_rB_r-C_rX_1^sD_r)e_j&=&(E_r)_{ij}, & i,j = 1, \ldots, n, 
	 \end{array}\right.
	\end{equation}
	in the $n^2r$ unknowns $x_{ijk}$, for $i,j=1,\ldots,n$ and $k=1,\ldots,r$, where $x_{ijk}$ is the $(i,j)$ entry of $X_k$.
 	With a suitable ordering of the equations and unknowns, the coefficient matrix $M\in\C^{n^2r\times n^2r}$ of the system is block upper triangular, with diagonal blocks of size either $r \times r$ or $2r \times 2r$
	\end{lemma}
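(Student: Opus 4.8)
The idea is to exhibit an explicit ordering of the $n^2r$ unknowns $x_{ijk}$ so that, when the equations are ordered consistently, the coefficient matrix becomes block upper triangular with the advertised block sizes. Because $A_k,C_k$ are upper triangular and $B_k,D_k$ are lower triangular, a short computation shows that the $(i,j)$-entry of $A_k X_k B_k$ involves only entries $x_{i'j'k}$ with $i'\ge i$ and $j'\le j$; the same holds for $C_k X_{k+1} D_k$ in the unknowns of $X_{k+1}$ (and, in the case $s=\top$, for $C_r X_1^\top D_r$, which involves entries $x_{j'i'1}$ with $j'\ge i$ and $i'\le j$, i.e.\ the roles of the two indices are swapped). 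So the equation indexed by $(i,j,k)$ only couples unknowns whose ``$(i,j)$ position'' is at least as ``extreme'' (larger row index, smaller column index) as $(i,j)$, across possibly two different matrices $X_k$. The plan is therefore to group the unknowns by the value of the pair $(i,j)$ — or rather by the orbit of $(i,j)$ under the index-swap induced by the $\star$ in the last equation — and order these groups so that more extreme positions come first.

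\medskip

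\textbf{Key steps.} First I would fix, for each $k$, the triangularity pattern and record precisely which $x_{i'j'k'}$ can appear in the $(i,j,k)$-th scalar equation: from $e_i^\top A_k X_k B_k e_j = \sum_{i'\ge i,\ j'\le j}(A_k)_{ii'}(X_k)_{i'j'}(B_k)_{j'j}$ one reads off the constraint $i'\ge i$, $j'\le j$, same matrix index; and from $e_i^\top C_k X_{k+1} D_k e_j$ the constraint $i'\ge i$, $j'\le j$ with matrix index $k+1$; in the last equation when $s=\top$, $e_i^\top C_r X_1^\top D_r e_j = \sum_{i'\ge i,\ j'\le j}(C_r)_{ii'}(X_1)_{j'i'}(D_r)_{j'j}$, so the unknown that appears is $x_{j'i'1}$ with $i'\ge i$, $j'\le j$. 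Second, I would define an equivalence on pairs in $\{1,\dots,n\}^2$: when $s=1$, every pair is its own class (so the blocks will have size $r$, one unknown $x_{ijk}$ per $k$); when $s=\star$, identify $(i,j)\sim(j,i)$, so the diagonal pairs $(i,i)$ give classes of one pair and hence blocks of size $r$, while off-diagonal pairs pair up into classes of two pairs, giving blocks of size $2r$. Third, I would put a total order on these equivalence classes refining the partial order ``$(i,j)$ is more extreme than $(i',j')$'' (meaning $i\ge i'$ and $j\le j'$): for instance order classes by the lexicographic value of $(-\min(i,j),\ \text{something})$, or more simply process pairs in order of decreasing $i-j$ with ties broken arbitrarily — any linear extension of the ``extremeness'' partial order works, with the caveat that in the $s=\star$ case the order must be well defined on classes, which it is because $(i,j)$ and $(j,i)$ are comparable to a third pair in a consistent way is \emph{not} automatic, so I would instead order classes by $\max(i,j)$ decreasing, then by $\min(i,j)$, and check this refines the induced relation on classes. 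Fourth, with equations ordered to match (the $(i,j,k)$ equation placed in the block of the class of $(i,j)$), I would verify that any unknown appearing in an equation of a given block lies either in that same block or in an earlier one: this is exactly the content of the index constraints from step one, together with the observation that the off-diagonal class $\{(i,j),(j,i)\}$ absorbs both the ``non-transposed'' coupling (which keeps $(i',j')$) and the ``transposed'' coupling (which produces $(j',i')$), so no coupling escapes the two-element class. Finally, within a single block the coefficient matrix is the $r\times r$ or $2r\times 2r$ matrix whose nonzero pattern is the cyclic one coming from $A_k,C_k$ (and $B_k,D_k$) evaluated at the diagonal entries — but the Lemma only claims block triangularity, so I need not analyze the diagonal blocks further here.

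\medskip

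\textbf{Main obstacle.} The only genuinely delicate point is the $s=\star$ case: one must check that the index-swap $(i,j)\mapsto(j,i)$ introduced by the transpose in the last equation is compatible with a single linear ordering of the blocks — that is, that grouping $(i,j)$ with $(j,i)$ does not create a cycle in the block dependency relation. Concretely, the equations for position $(i,j,k)$ with $k<r$ couple $x_{ijk}$ to $x_{i'j'k}$ and $x_{i'j'(k+1)}$ with $i'\ge i,\ j'\le j$ (same class, possibly earlier), but the $k=r$ equation couples $x_{ijr}$ to $x_{j'i'1}$ with $i'\ge i,\ j'\le j$; writing the target pair as $(a,b)=(j',i')$ we have $b=i'\ge i$ and $a=j'\le j$, so $(a,b)$ satisfies $b\ge i$ and $a\le j$, hence $\min(a,b)$ and $\max(a,b)$ are controlled by $i,j$ in a way that lands in the class of $(i,j)$ or strictly earlier under the $\max$-then-$\min$ ordering — I would carry out exactly this inequality bookkeeping to confirm no back-edge appears. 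I expect this verification, plus the bookkeeping that the commutation matrix $P_{n,n}$ in~\eqref{eq:Tmatrix} acts precisely as the claimed index swap, to be the bulk of the proof; everything else is the routine unfolding of triangular-times-matrix-times-triangular products.
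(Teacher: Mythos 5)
Your plan mirrors the paper's: group the triples $(i,j,k)$ by the pair $(i,j)$, identify $(i,j)$ with $(j,i)$ in the $s=\star$ case so that the transpose in the last equation stays within one class, and then totally order the classes so that the index propagation coming from the triangular coefficients only points to the same class or a later one. However, there is a sign error at the very first step, and it invalidates the bookkeeping that the construction depends on. With $A_k$ upper triangular and $B_k$ lower triangular, the nonzero entries of $B_k$ are $(B_k)_{tj}$ with $t\geq j$, so
\[
(A_k X_k B_k)_{ij}=\sum_{\ell\geq i}\sum_{t\geq j}(A_k)_{i\ell}(X_k)_{\ell t}(B_k)_{tj},
\]
which means the unknowns occurring in the $(i,j)$ equation are $(X_k)_{\ell t}$ with $\ell\geq i$ \emph{and} $t\geq j$: both indices are nondecreasing. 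You wrote $j'\leq j$, which is the opposite inequality on the second index, and the same wrong sign recurs in your treatment of $C_kX_{k+1}D_k$ and of $C_rX_1^\top D_r$ (there, $(D_r)_{tj}\neq 0$ forces $t\geq j$, not $t\leq j$).

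This is not cosmetic. Your ``extremeness'' partial order (larger row, smaller column) and the $\max$-then-$\min$ ordering of the classes are tailored to the wrong constraint, and indeed the very bookkeeping you sketch for the last equation does not close even under your own claim: from $a\leq j$ and $b\geq i$ one cannot conclude that $(a,b)$ sits in the class of $(i,j)$ or an earlier one under your order (take $i=1$, $j=5$, $a=b=1$: then $\max(a,b)=1<5=\max(i,j)$, so $(a,b)$ lands \emph{later}, not earlier). The same counterexample breaks the $k<r$ case as well, so block triangularity fails. With the correct constraint $\ell\geq i$, $t\geq j$ the right observation is that $\max(\ell,t)\geq\max(i,j)$ and, in case of equality, $\min(\ell,t)\geq\min(i,j)$, so one orders the classes by lexicographically increasing $(\max(i,j),\min(i,j))$ --- exactly the paper's $\mathcal{L}_{11},\mathcal{L}_{21},\mathcal{L}_{22},\mathcal{L}_{31},\ldots$ --- and this yields block upper triangular. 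Once the sign is fixed, your sketch becomes essentially the paper's proof (including the handling of the $\star$ via the identification $x_{\ell,t,r+1}=x_{t\ell 1}$, which keeps the coupled unknown inside the class $\{(\ell,t),(t,\ell)\}$).
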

	\begin{proof}
	We define an ordering of the triples $(i,j,k)$ as follows. Define the following ordered sublists
	\begin{align*}
		\mathcal{L}_{ii}&= (i,i,1), (i,i,2), \ldots,(i,i,r), & 1&\leq i \leq n,\\
		\mathcal{L}_{ij}&= (i,j,1), (i,j,2), \ldots,(i,j,r),(j,i,1), (j,i,2), \ldots,(j,i,r), & 1&\leq j<i \leq n;
	\end{align*}
	then, we concatenate these sublists in lexicographic order of their index,
	\begin{equation}\label{eq:list}
		\mathcal{L}_{11},\mathcal{L}_{21},\mathcal{L}_{22},\mathcal{L}_{31},\mathcal{L}_{32},\mathcal{L}_{33},\mathcal{L}_{41},\mathcal{L}_{42},\mathcal{L}_{43},\mathcal{L}_{44},\dots,\mathcal{L}_{n1},\mathcal{L}_{n2},\ldots,\mathcal{L}_{nn}.
	\end{equation}
	In the matrix $M$, we sort the equations~\eqref{eq:eiej} (corresponding to rows) and the unknowns $x_{ijk}$ (corresponding to columns) according to this order~\eqref{eq:list} of the triples $(i,j,k)$. Grouping together the triples that belong to the same sublist $\mathcal{L}_{ij}$, we obtain a block partition of $M$ with $\frac{n(n+1)}{2}$ block rows and columns, each of size $r$ or $2r$, depending on whether $i \neq j$
	or $i = j$. 

	In order to simplify the notation, we set $x_{i,j,r+1}=x_{ij1}$ if $s=1$ and $x_{i,j,r+1}=x_{ji1}$ if $s=\star$. With this choice, $x_{ijk}$ and $x_{i,j,k'}$ belong to the same sublist ($\mathcal{L}_{ij}$ or $\mathcal{L}_{ji}$) for any $k,k'$, and whenever $i \leq \ell$ and $j \leq t$ the unknown $x_{ijk}$ belongs to a sublist that comes before $x_{\ell tk}$.

Since $A_k$ is upper triangular and $B_k$ is lower triangular, for a given $(i,j,k)$ we have
	\[
		(A_kX_kB_k)_{ij}=\sum_{\ell=1}^{n} (A_k)_{i\ell}\sum_{t=1}^n(X_k)_{\ell t}(B_k)_{t j}
		=\sum_{\ell=i}^n\sum_{t=j}^{n} (A_k)_{i\ell}(X_k)_{\ell t}(B_k)_{tj},
	\]
and similarly for $(C_kX_{k+1}D_k)_{ij}$. Thus the $(i,j,k)$ equation of the system is
	\[
	\sum_{\substack{i \leq \ell\\ j \leq t}} \bigl((A_k)_{i\ell}x_{\ell t k}(B_k)_{tj}-
	(C_k)_{i\ell} x_{\ell,t,k+1}(D_k)_{tj}\bigr)=
   (E_k)_{ij}.
	\]
	Hence an equation with index in $\mathcal{L}_{ij}$ contains only unknowns belonging to the sublist $\mathcal{L}_{ij}$ and to sublists that follow it in the order of \eqref{eq:list}. This proves that $M$ is block upper triangular.
\end{proof}


	\subsubsection{Characterizing the diagonal blocks}
	\label{sec:char-diag-blocks}
	
	Both from the computational and from the theoretical point of view we
	are interested in characterizing the structure of the diagonal blocks
	of the coefficient matrix $M$ associated with the linear system obtained by applying the permutation of Lemma  \ref{lem:orderingtriangular}.
	
	Theoretically, this is interesting because the system \eqref{eq:periodic} is nonsingular if and only if
	the determinants of all diagonal blocks of $M$ are nonzero. 
	This will allow us to prove Theorems \ref{thm:1eigentheorem} and \ref{thm:stareigentheorem}.
		
	Computationally, this is relevant because these are the matrices
	that allow one to carry out the block
	back substitution process to compute the solution of \eqref{eq:periodic}, when it is unique.
	
	As already pointed out in Section~\ref{sec:transposepermutation}
	the diagonal blocks can be obtained by choosing a pair $(i,j)$ and selecting the equations 
	given by 
	\[
		\left\{\begin{array}{cccc}
			e_i^\top (A_k X_k B_k - C_k X_{k+1} D_k) e_j& = &(E_k)_{ij}, &    k = 1, \ldots, r-1, \\
			e_i^\top (A_r X_r B_r - C_r X_1^s D_r) e_j &=& (E_r)_{ij},
		\end{array}\right.
		 \]
	and the ones obtained by the pair $(j,i)$, and removing all the variables with indices
	different from $(i,j)$ and $(j,i)$. 
As mentioned in the proof of Lemma \ref{lem:orderingtriangular}, 
	these other variables have indices $(i',j',k')$ belonging to a subset $\mathcal{L}_{i',j'}$ that follows $\mathcal L_{ij}$ in the given order,
	and hence their value has already been computed in the back substitution process.
	When $i = j$ this gives us
	an $r \times r$ linear system,  
	otherwise we obtain a $2r \times 2r$ linear system. We denote them with $\mS_{ij}$, for $i\ge j$.
	
	Notice that this procedure can be carried out both in the case $s \in \{ 1, \top \}$ and
	in the $s = \ctop $ case, even if in the latter these systems are
	nonlinear. 
	
	\begin{lemma} \label{lem:determinant}
		Let $M$ be the following matrix:
		\[
			M = \begin{bmatrix}
				\alpha_1 &  \beta_1 \\
				& \ddots & \ddots \\
				& & \ddots & \beta_{p-1} \\
				\beta_p  &&& \alpha_p \\
			\end{bmatrix}.
		\]
		Then, $\det M =\displaystyle \prod_{k = 1}^p \alpha_k -(-1)^p \prod_{k = 1}^p \beta_k$. 
	\end{lemma}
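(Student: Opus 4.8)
The plan is to compute the determinant by cofactor expansion along the first column, exploiting the sparse (almost bidiagonal, plus one corner) structure of $M$. The first column has exactly two nonzero entries: $\alpha_1$ in position $(1,1)$ and $\beta_p$ in position $(p,1)$. So $\det M = \alpha_1 \cdot \det M_{11} + (-1)^{p+1}\beta_p \cdot \det M_{p1}$, where $M_{11}$ is the minor obtained by deleting row $1$ and column $1$, and $M_{p1}$ by deleting row $p$ and column $1$.

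First I would observe that $M_{11}$ is again upper bidiagonal: it has diagonal entries $\alpha_2,\dots,\alpha_p$ and superdiagonal entries $\beta_2,\dots,\beta_{p-1}$, and the corner entry $\beta_p$ has been deleted along with column $1$, so $M_{11}$ is upper triangular with $\det M_{11} = \prod_{k=2}^p \alpha_k$. Next, $M_{p1}$ is the matrix obtained by deleting the last row and first column; what remains is exactly lower triangular, with the entries $\beta_1,\dots,\beta_{p-1}$ now sitting on its main diagonal (the old superdiagonal $(k,k+1)$ entries become the $(k,k)$ entries after the column shift), so $\det M_{p1} = \prod_{k=1}^{p-1}\beta_k$. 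Substituting, $\det M = \alpha_1\prod_{k=2}^p\alpha_k + (-1)^{p+1}\beta_p\prod_{k=1}^{p-1}\beta_k = \prod_{k=1}^p\alpha_k - (-1)^p\prod_{k=1}^p\beta_k$, which is the claim.

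An alternative, perhaps cleaner, route is induction on $p$: expanding along the first column reduces the cofactor of $\beta_p$ to a triangular determinant directly, while the cofactor of $\alpha_1$ is a matrix of the same shape with $p-1$ blocks, to which the inductive hypothesis applies — but one must be slightly careful because deleting the first row and column of $M$ does not quite give a matrix of the stated form (the corner entry disappears), so the bidiagonal-triangular shortcut above is more direct. The base case $p=1$ gives $\det M = \alpha_1 = \alpha_1 - (-1)^1\beta_1 \cdot 0$; here one should note the degenerate reading of the formula when $p=1$, where there is no genuine corner block, and the convention is simply $\det M = \alpha_1$, consistent with $\prod\alpha_k - (-1)\prod\beta_k$ only if we interpret the product appropriately — so I would either state the result for $p\ge 2$ or remark on the $p=1$ convention.

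The main obstacle — really the only subtlety — is bookkeeping with the index shifts and the sign $(-1)^{p+1}$ from the cofactor position $(p,1)$: one has to track carefully that deleting column $1$ shifts the superdiagonal of the surviving rows onto the diagonal, turning the $\beta_k$ into diagonal entries, and that the sign works out to $-(-1)^p$ rather than $+(-1)^p$. Everything else is a routine triangular-determinant computation, so I would keep the write-up to the two cofactor terms and the identification of each as a product of a triangular matrix's diagonal.
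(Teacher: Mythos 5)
Your proof is correct and is exactly the approach the paper takes: the paper's entire proof consists of the single sentence ``Use Laplace's determinant expansion on the first column,'' and your write-up simply carries out that expansion explicitly, identifying the two cofactors as triangular determinants and tracking the sign $(-1)^{p+1}$. The additional remark on the degenerate $p=1$ case is a sensible caution, and is consistent with the paper's usage, which only invokes the lemma for $p\geq 2$ and handles $r=1$ separately.
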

	\begin{proof}
		Use Laplace's determinant expansion on the first column. 
	\end{proof}
	
	In the cases $s \in \{1, \top \}$, $\mS_{ii}$
	is an $r\times r$ linear system in the variables $(X_1)_{ii},\dots, (X_r)_{ii}$ 
	with coefficient matrix: 
	\begin{equation}\label{eq:sii}
	 M_{ii}:= \begin{bmatrix}
	(A_1)_{ii}(B_1)_{ii} & -(C_1)_{ii}(D_1)_{ii}\\
		 &  \ddots & \ddots \\
		 & & (A_{r-1})_{ii}(B_{r-1})_{ii} & -(C_{r-1})_{ii}(D_{r-1})_{ii}\\ 
		 -(C_r)_{ii}(D_r)_{ii}  &  &  &  (A_r)_{ii}(B_r)_{ii} \\
	 \end{bmatrix},
	\end{equation}
	for $r>1$ and $M_{ii}=(A_1)_{ii}(B_1)_{ii}-(C_1)_{ii}(D_1)_{ii}$ for $r=1$.
	
	According to Lemma~\ref{lem:determinant} we have: 
	\begin{equation}\label{eq:detmii}
		\det M_{ii} = \prod_{k = 1}^r (A_k)_{ii}(B_k)_{ii} - 
			\prod_{k = 1}^r (C_k)_{ii}(D_k)_{ii}\,.
	\end{equation}
A similar relation holds also when $i>j$ in the $s = 1$
	case, since $\mS_{ij}$ can be decoupled into two $r\times r$ systems. More precisely, in the case $s=1$, the coefficient matrix of $\mS_{ij}$ is block diagonal with two diagonal blocks, the top left block is
	\begin{equation}\label{eq:mijs1}
	M_{ij}:= \begin{bmatrix}
	(A_1)_{ii}(B_1)_{jj} & -(C_1)_{ii}(D_1)_{jj}\\
		 &  \ddots & \ddots \\
		 & & (A_{r-1})_{ii}(B_{r-1})_{jj} & -(C_{r-1})_{ii}(D_{r-1})_{jj}\\ 
		 -(C_r)_{ii}(D_r)_{jj}  &  &  &  (A_r)_{ii}(B_r)_{jj} \\
				\end{bmatrix},
	\end{equation}
	for $r>1$ and $M_{ij}=(A_1)_{ii}(B_1)_{jj}-(C_1)_{ii}(D_1)_{jj}$ for $r=1$,
	while the lower bottom block, $M_{ji}$, is obtained exchanging the roles of $i$ and $j$. From Lemma \ref{lem:determinant} we get:
	\begin{equation}\label{eq:detmij1}
	\det M_{ij}= \prod_{k = 1}^r (A_k)_{ii}(B_k)_{jj} - 
			\prod_{k = 1}^r (C_k)_{ii}(D_k)_{jj}.
	\end{equation}
	
	In the case $s = \top$, instead, 
	the systems $\mS_{ij}$ form a $2r\times 2r$ linear system in the variables $(X_k)_{ij}$, $(X_k)_{ji}$, for $k=1,\hdots,r$, with coefficient matrix
	\begin{equation}\label{eq:sij}
	M_{ij}:=\begin{bmatrix}
		\mathcal B_{ij} & -(C_r)_{ii}(D_r)_{jj} e_{r} e_1^\top \\
		-(C_1)_{jj}(D_1)_{ii} e_r e_1^\top & \mathcal B_{ji} \\
	\end{bmatrix},
	\end{equation}
	where 
	\[
	\mathcal B_{ij} = 
	\begin{bmatrix}
		(A_1)_{ii}(B_1)_{jj} & -(C_1)_{ii}(D_1)_{jj}\\
		& \ddots &\ddots\\
		& & \ddots & -(C_{r-1})_{ii}(D_{r-1})_{jj} \\
		& & & (A_r)_{ii}(B_r)_{jj} \\
	\end{bmatrix}.
	\]
	Thanks, again, to Lemma~\ref{lem:determinant}, this matrix has determinant equal to
	\begin{equation}\label{eq:detmij}
		\det M_{ij} = \prod_{k = 1}^r (A_k)_{ii}(B_k)_{ii}(A_k)_{jj}(B_k)_{jj} -\prod_{k = 1}^r (C_k)_{ii}(D_k)_{ii}(C_k)_{jj}
		(D_k)_{jj}. 
	 \end{equation} 

 
	\subsection{Linearizing the case $s=\ctop $}\label{sec:linearizing}
 We have already mentioned that, when $s=\ctop $, the system~\eqref{eq:periodic} is not linear over the complex field, 
	since it involves not only the entries of the matrix $X_1$ but also their conjugates. A method to transform it into a linear system over $\C$ is as follows: in addition to the equations of the system, we consider the equations obtained by taking their conjugate transpose, namely
	\begin{equation*}
	\begin{array}{cccc}
		B_k^\ctop  X_k^\ctop  A_k^\ctop  - D_k^\ctop  X_{k+1}^\ctop  C_k^\ctop & =& E_k^\ctop ,&k=1,\hdots,r-1,\\
B_rX_r^\ctop A_r-D_r^\ctop X_1C_r^\ctop &=&E_r^\ctop .&
\end{array}
	\end{equation*}
If we consider $X_k$ and $X_k^\ctop $ as two separate variables, then this is a system of $2r$ generalized Sylvester equations in $2r$ matrix unknowns. We prove more formally that this process produces an equivalent system.
 
\begin{lemma} \label{complexlemma}
The system \eqref{eq:periodic}
is nonsingular if and only if the system
\begin{equation}\label{eq:s2}
\left\{\begin{array}{cccc}
 A_k X_k B_k -C_k X_{k+1} D_k &=& E_k, & k = 1, \ldots, r-1, \\
			A_r X_r B_r - C_r  X_{r+1}D_r& =& E_r,\\
			 B_k^\ctop  X_{r+k} A_k^\ctop -D_k^\ctop  X_{r+k+1} C_k^\ctop &=&E_k^\ctop ,&k=1,\ldots,r-1,\\
			 B_r^\ctop  X_{2r}A_r^\ctop -D_r^\ctop X_1C_r^\ctop &=&E_r^\ctop 
\end{array}\right.      
\end{equation}
is nonsingular.
			\end{lemma}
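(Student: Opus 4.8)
The plan is to use that both systems are \emph{square} linear systems: \eqref{eq:s2} is linear over $\C$ in the $2r$ unknowns $X_1,\dots,X_{2r}$ (treated as independent, since no unknown carries a $\star$), while \eqref{eq:periodic} — here in the case $s=\ctop$ treated in this section — is linear over $\R$ in the $2rn^2$ real coordinates of $X_1,\dots,X_r$. Consequently, each of the two systems is nonsingular if and only if the associated homogeneous system has only the trivial solution, so the lemma reduces to the claim that the homogeneous version of \eqref{eq:periodic} has a nonzero solution if and only if the homogeneous version of \eqref{eq:s2} does.

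The first (and easy) implication is: if $(X_1,\dots,X_r)\neq 0$ solves the homogeneous \eqref{eq:periodic}, then the $2r$-tuple $(X_1,\dots,X_r,X_1^\ctop,\dots,X_r^\ctop)$ is a nonzero solution of the homogeneous \eqref{eq:s2}. This is checked directly: the first $r$ equations of \eqref{eq:s2} coincide with those of \eqref{eq:periodic} once $X_{r+1}$ is identified with $X_1^\ctop$, and the last $r$ equations are exactly the conjugate transposes of the equations of \eqref{eq:periodic}. For the converse I would introduce the conjugate-linear involution
\[
  \sigma(X_1,\dots,X_{2r}) := \bigl(X_{r+1}^\ctop,\dots,X_{2r}^\ctop,\;X_1^\ctop,\dots,X_r^\ctop\bigr)
\]
of $(\C^{n\times n})^{2r}$. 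Taking the conjugate transpose of each equation of the homogeneous \eqref{eq:s2} and relabelling shows that $\sigma$ maps the (complex) solution space $V$ of that homogeneous system into itself, and that an element $X\in V$ is fixed by $\sigma$ exactly when $X_{r+k}=X_k^\ctop$ for $k=1,\dots,r$.

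Assume now that $V\neq\{0\}$ and pick $0\neq X\in V$. Then $X+\sigma(X)$ and $\ci\,(X-\sigma(X))$ both belong to $V$ — here one uses that $V$, being the kernel of a $\C$-linear map, is closed under multiplication by $\ci$ — are fixed by $\sigma$, and do not both vanish. Hence $V$ contains a nonzero $\sigma$-fixed element $X^\ast=(X^\ast_1,\dots,X^\ast_{2r})$; since $X^\ast_{r+k}=(X^\ast_k)^\ctop$, the tuple $(X^\ast_1,\dots,X^\ast_r)$ is nonzero, and substituting $X^\ast_{r+1}=(X^\ast_1)^\ctop$ into the first $r$ equations of \eqref{eq:s2} shows that it solves the homogeneous \eqref{eq:periodic}. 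The step that requires care is precisely this last one: the symmetry linking the two halves of \eqref{eq:s2} is \emph{antilinear}, so one cannot decouple \eqref{eq:s2}, by a complex change of variables, into a copy of \eqref{eq:periodic} together with its conjugate; it is the elementary remark that the real $\sigma$-fixed subspace of a nonzero complex subspace is itself nonzero that replaces such a decoupling. All remaining verifications are routine manipulations with the $\ctop$ operator, and the argument is uniform in $r$ (in particular it covers $r=1$, where \eqref{eq:s2} is a pair of generalized Sylvester equations).
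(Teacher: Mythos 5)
Your proof is correct, and it is essentially the same argument as the paper's: the paper's candidate solution $(X_k + X_{r+k}^\ctop)_{k=1}^r$ is precisely the first $r$ components of $X+\sigma(X)$, and the paper's fallback $\ci(X_1,\dots,X_r)$ (used when that candidate vanishes, i.e.\ when $\sigma(X)=-X$) is, up to a nonzero scalar, the first $r$ components of $\ci(X-\sigma(X))$. Your reformulation via the conjugate-linear involution $\sigma$ and its fixed real subspace is a clean way of packaging the same two constructions and makes transparent why they cannot both vanish.
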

	\begin{proof}
	  We may consider only the case in which $E_k=0$: checking nonsingularity corresponds to checking that there are no solutions to this homogenous system apart from the trivial one $X_k=0$, for $k=1,\dots,r$.

		Let us first assume that \eqref{eq:periodic} has a nonzero solution $(X_1,\hdots,X_r)$. Then $(X_1,\hdots,X_r,X_1^\ctop ,\hdots,X_r^\ctop )$ is a nonzero solution of \eqref{eq:s2}. 
	
	Conversely, if $(X_1,\hdots,X_r,X_{r+1},\hdots,X_{2r})$ is a nonzero solution of \eqref{eq:s2}, then $(X_1+X_{r+1}^\ctop ,\hdots,X_r+X_{2r}^\ctop )$ is a solution of \eqref{eq:periodic}. If $(X_1+X_{r+1}^\ctop ,\hdots,X_r+X_{2r}^\ctop )=0$, then $X_{r+i}=-X_i^\ctop $, for $i=1,\hdots,r$, and then $\ci(X_1,\hdots,X_r)$ is a nonzero solution of \eqref{eq:periodic}.
	\end{proof}
	
	\begin{remark}  \label{remark:TH}
		The proof of Lemma~\ref{complexlemma} does not work if one replaces $\ctop $ with $\top$ everywhere: it breaks in the final part, because $\ci(X_1,\hdots,X_r)$ is not necessarily a solution of \eqref{eq:periodic} with $\star=\top$. Indeed, Lemma~\ref{complexlemma} is false with $\top$ instead of $\ctop $. Let us consider, for instance, the case $n=r=1$ and the equation $x_1 + x_1^\top = 2x_1 = 0$. This equation has only the trivial solution, but the linearized system
	\[
	\begin{cases}
		z_1 + z_2 = 0\\
		z_1 + z_2 = 0
	\end{cases}
	\]
	has infinitely many solutions.
	\end{remark}
	
	Another relevant difference between the $\star=\top$ and the $\star=\ctop $ cases is the following. System \eqref{eq:periodic} is nonsingular if and only if the system obtained after replacing the minus sign in the last equation by a plus sign
	\begin{equation}\label{eq:plusign}
	\left\{
\begin{array}{cccc}
A_kX_kB_k-C_kX_{k+1}D_k&=&E_k,& k=1,\ldots,r-1,\\
A_rX_rB_r+C_rX_1^\ctop D_r&=&E_r
\end{array}
\right.
	\end{equation} 
is nonsingular. To see this, reduce again to the case $E_k=0$ for all $k=1,\dots,r$ and note that if $(X_1,\hdots,X_r)$ is a nonzero solution of \eqref{eq:periodic} then $\ci(X_1,\hdots,X_r)$ is a nonzero solution of \eqref{eq:plusign}, and viceversa. This property no longer holds true with $s=\top$.



	\section{Proofs of the main results} \label{sec:existenceuniqueness}

	Here we prove Theorems~\ref{thm:1eigentheorem}--\ref{thm:stareigentheorem}, with the aid of all previous developments. We start with Theorem~\ref{thm:1eigentheorem}.
	
	\begin{proof}[Proof of Theorem~{\rm\ref{thm:1eigentheorem}}] 
	We can consider only the case in which $E_i=0$, $i=1,2,\dots,r$. Using the periodic Schur form of the formal products \eqref{eq:1formalproduct} we may consider the equivalent system
(see the proof of Lemma \ref{thm:lemtriang}) 
\[
\left\{
\begin{array}{cccc}
\wh A_k X_k\wh B_k-\wh C_kX_{k+1}\wh D_k&=&0,& k=1,\ldots,r-1,\\
\wh A_r X_r\wh B_r-\wh C_rX_1\wh D_r&=&0,
\end{array}
\right.
\]  
where, for each $k$, the matrices $\wh A_k$ and $\wh C_k$ are upper triangular and $\wh B_k$ and $\wh D_k$ are lower triangular. If the formal products \eqref{eq:1formalproduct} are regular, then their eigenvalues are the ratios $\lambda_i := \prod_{k=1}^r \frac{(\wh A_k)_{ii}}{(\wh C_k)_{ii}}$, 
		$\mu_i := \prod_{k=1}^r \frac{(\wh D_k)_{ii}}{(\wh B_k)_{ii}}$, respectively, for $i=1,\hdots,n$ (they are allowed to be $\infty$).
		
With this triangularity assumption, in Lemma \ref{lem:orderingtriangular} we have shown that the system of Sylvester equations is equivalent to a block upper triangular system whose matrix coefficient has determinant $\delta:=\prod_{i,j=1}^n \det(M_{ij})$, where $M_{ij}$ is defined in \eqref{eq:sii} and \eqref{eq:mijs1}.

In summary, the system of Sylvester equations is nonsingular if and only if $\delta\ne 0$, which, using \eqref{eq:detmii} and \eqref{eq:detmij1}, is equivalent to requiring
\begin{equation}\label{nothingcond}
\prod_{k=1}^r (\wh A_k)_{ii}(\wh B_k)_{jj} \neq \prod_{k=1}^r (\wh C_k)_{ii}(\wh D_k)_{jj},\qquad i,j=1,\ldots,n.
\end{equation}

If $\delta\ne 0$, then it cannot happen that $\prod_k(\wh A_k)_{ii}$ and $\prod_k(\wh C_k)_{ii}$ are both zero or that $\prod_k(\wh B_k)_{ii}$ and $\prod_k(\wh D_k)_{ii}$ are both zero and thus the formal products are regular. Moreover, condition \eqref{nothingcond} implies that $\la_i\ne \mu_j$ for any $i,j=1,\ldots,n$ and thus the two products have disjoint spectra.

On the contrary, if $\delta=0$ then the equality holds in \eqref{nothingcond} for some $i$ and $j$. One can check that this condition implies that either one of the two formal products is singular or $\la_i=\mu_j$ and they cannot have disjoint spectra.
	\end{proof}   
	
We now give the proof of Theorem~{\rm\ref{thm:stareigentheorem}}
separating the cases $\star=\top$ and $\star=\ctop $ since the techniques we use are different.
	
		 \begin{proof}[Proof of Theorem~{\rm\ref{thm:stareigentheorem}} for $\star=
		 \top$] 
Proceeding as in the proof of Theorem \ref{thm:1eigentheorem}, we use the periodic Schur form of the formal product \eqref{eq:magicproduct} to get the equivalent system
(see the proof of Lemma \ref{thm:lemtriang}) 
\[
\left\{
\begin{array}{cccc}
\wh A_k X_k\wh B_k-\wh C_kX_{k+1}\wh D_k&=&0,& k=1,\ldots,r-1,\\
\wh A_r X_r\wh B_r-\wh C_rX_1^\top\wh D_r&=&0,
\end{array}
\right.
\]  
where, for each $k$, the matrices $\wh A_k$ and $\wh C_k$ are upper triangular and $\wh B_k$ and $\wh D_k$ are lower triangular. If the formal product \eqref{eq:1formalproduct} is regular, then its eigenvalues are the ratios $\lambda_i := \prod_{k=1}^r \frac{(\wh A_k)_{ii}(\wh B_k)_{ii}}{(\wh C_k)_{ii}(\wh D_k)_{ii}}$, 
for $i=1,\hdots,n$.
		
With this triangularity assumption, in Lemma \ref{lem:orderingtriangular} we have shown that the previous system is equivalent to a block upper triangular system whose coefficient matrix has determinant $\delta:=\displaystyle\prod_{i=1}^n \det(M_{ii})\displaystyle\prod_{\substack{i,j=1 \\i<j}}^n\det(M_{ij})$, with $M_{ii}$ as in \eqref{eq:sii} and $M_{ij}$, for $i\ne j$, as in \eqref{eq:sij}.

In summary, the system of Sylvester-like equations is nonsingular if and only if $\delta\ne 0$, that, using \eqref{eq:detmii} and \eqref{eq:detmij}, is equivalent to requiring
			 \begin{equation}\label{eq:Tcond}
			 \begin{array}{ll}
				 \displaystyle\prod_{k=1}^r (\wh A_k)_{ii}(\wh B_k)_{ii} \neq \prod_{k=1}^r (\wh C_k)_{ii}(\wh D_k)_{ii}\,, & i =1,\dots,n,\\
				 \displaystyle \prod_{k=1}^r (\wh A_k)_{ii}(\wh B_k)_{ii}(\wh A_k)_{jj}(\wh B_k)_{jj} \neq \prod_{k=1}^r (\wh C_k)_{ii}(\wh D_k)_{ii}(\wh C_k)_{jj}(\wh D_k)_{jj}\,, & i \neq j.
				 \end{array}
\end{equation}
If $\delta\ne 0$, then it cannot happen that $\prod_k(\wh A_k)_{ii}(\wh B_k)_{ii}$ and $\prod_k(\wh C_k)_{ii}(\wh D_k)_{ii}$ are both zero, for some $i$, thus the formal product \eqref{eq:1formalproduct} is regular. Moreover, conditions \eqref{eq:Tcond} imply that 
\[
\left\{\begin{array}{l}
\la_i\ne 1,\qquad i=1,\ldots,n\\
\la_i\ne\la_j^{-1},\qquad i\ne j,
\end{array}\right.
\]
and this implies in turn that the spectrum $\Lambda(\Pi)\setminus\{-1\}$ is reciprocal free and the multiplicity of $\{-1\}$ is at most one.

On the contrary, if $\delta=0$ then the equality holds in \eqref{eq:Tcond} above for some $i$ or below for some pair $(i,j)$, with $i\ne j$. One can check that this condition implies that one of the following cases holds: (a) the formal product is singular; (b) $\la_i=1$, for some $i$, and thus $\Lambda(\Pi)\setminus\{-1\}$ is not reciprocal free; (c) $\la_i=1/\mu_j\ne -1$, for some $i\ne j$, and thus $\Lambda(\Pi)\setminus\{-1\}$ is not reciprocal free; (d) $\la_i=1/\mu_j=-1$ and the multiplicity of $-1$ is greater than $1$.
\end{proof}

	Using Lemma~\ref{complexlemma}, the following argument allows us to obtain Theorem~\ref{thm:stareigentheorem} with $\star=\ctop $ directly as a consequence of Theorem~\ref{thm:1eigentheorem}.

	\begin{proof}[Proof of Theorem~{\rm\ref{thm:stareigentheorem}} for $\star=\ctop $]
	Let us start from a system of the form~\eqref{eq:periodic} with $s=\ctop $.
	Lemma~\ref{complexlemma} shows that it is nonsingular if and only if the larger linear system \eqref{eq:s2} is nonsingular. System~\eqref{eq:s2} is a system of $2r$ generalized Sylvester equations with $s=1$. Hence we can apply Theorem~\ref{thm:1eigentheorem} to this system, obtaining that~\eqref{eq:s2} is nonsingular if and only if the two formal products
	\[
		\Pi_1 := \Pi = D_r^{-\ctop }B_r^\ctop D_{r-1}^{-\ctop }B_{r-1}^\ctop \dotsm D_1^{-\ctop }B_1^\ctop  C_r^{-1}A_r C_{r-1}^{-1}A_{r-1} \dotsm C_1^{-1}A_1
	\]
	and
	\[
		\Pi_2 := C_r^\ctop  A_r^{-\ctop } C_{r-1}^\ctop  A_{r-1}^{-\ctop } \dotsm C_1^\ctop  A_1^{-\ctop }D_r B_r^{-1} D_{r-1} B_{r-1}^{-1} \dotsm D_1 B_1^{-1} 
	\]
 are regular and have no common eigenvalues. If $\lambda_1,\lambda_2,\dots,\lambda_n$ denote the eigenvalues of $\Pi_1$, then the eigenvalues of the formal product
	\[
		\Pi_2^{-\ctop } := C_r^{-1} A_r C_{r-1}^{-1} A_{r-1} \dotsm C_1^{-1} A_1 D_r^{-\ctop } B_r^{\ctop } D_{r-1}^{-\ctop } B_{r-1}^{\ctop } \dotsm D_1^{-\ctop } B_1^{\ctop }
	\]
	are again $\lambda_1,\lambda_2,\dots,\lambda_n$, because $\Pi_2^{-\ctop }$ differs from $\Pi_1$ only by a cyclic permutation of the factors. This proves that the eigenvalues of $\Pi_2$ are $(\overline\lambda_1)^{-1},(\overline\lambda_2)^{-1},\allowbreak\hdots,(\overline\lambda_n)^{-1}$, so they are distinct from those of $\Pi_1$ if and only if $\Lambda(\Pi_1)$ is a $\ctop $-reciprocal free set. 
	\end{proof}

	This proof shows clearly the connection between the condition on a single formal product in Theorem~\ref{thm:1eigentheorem} and the condition on two products in Theorem~\ref{thm:stareigentheorem}. Unfortunately, we were unable to find a simple modification of this argument that works for the case $\star=\top$, mostly due to the issue presented in Remark~\ref{remark:TH}.

\section{An $O(n^3r)$ algorithm for computing the solution}
\label{sec:algorithm}

Here we describe an efficient algorithm for the solution of a nonsingular system of $r$ Sylvester-like equations~\eqref{eq:gensystem} of size $n\times n$.
We follow the big-oh notation $O(\cdot)$, as in \cite{higham-accuracy}, for both large and small quantities, and we use the number of floating point operations (flops) as a complexity measure.

The tools needed to develop the algorithm are the same used, in the previous sections, for the nonsingularity results.
In the description of the algorithm we focus on the complex case and
so we consider triangular coefficients. However, a solution with quasitriangular
forms in case of real data can be done following a similar procedure.

We proceed through the following steps:
\begin{enumerate}
\item\label{it:alg-reduction-periodic}(Step 1) We perform a suitable
  number of substitutions, changes and elimination of variables, in order to
  transform the system into irreducible systems of periodic
  form~(%
  \ref{eq:periodic}%
  ), as described in Section~\ref{sec:red1}.
\item\label{it:alg-reduction-triangular}(Step 2) For each (irreducible) periodic
  system, we compute a periodic Schur decomposition to reduce the
  coefficients, say $A_k, B_k, C_k, D_k$, to upper and lower
  triangular forms, as described in Section~\ref{sec:triangular}.
\item\label{it:alg-backsubstitution}(Step 3)
	Since the resulting systems can be seen as  essentially block triangular linear systems (as described in Section~\ref{sec:transposepermutation}),
	we solve them by back substitution.
\item\label{it:alg-remaining}(Step 4)
	We compute the value of the variables that have been eliminated in Step 1 (using Theorem~\ref{thm:justone}).
\end{enumerate}

This section describes how to handle these steps algorithmically. 
Moreover, we perform an analysis of
the computational costs, showing that the solution can be computed
in $O(n^3r)$ flops, and we prove a backward stability result for
the computed solution.

We discuss Step~\ref{it:alg-reduction-periodic} in Section~\ref{sec:algored}. 
Step~\ref{it:alg-reduction-triangular}
amounts to computing a periodic Schur factorization,
which can be carried out in $O(n^3 r)$ flops; we refer to \cite{bojanczyk1992periodic} for details
concerning it.

Step~\ref{it:alg-backsubstitution} is the one that requires more discussion; we devote Sections~\ref{sec:alg-backsubstitution}--\ref{sec:solvingB} to it. Moreover, we perform a backward error
analysis for the resulting algorithm in Section~\ref{sec:backward-error}.
We focus on the case $s=\star$, since the case $s=1$ can be found in~\cite{byers-rhee}.
The cases
$\star = \top$ and $\star = \ctop$ are handled in a similar way,
but the former is easier to describe since the associated system
is linear, without the need of separating the real
and imaginary parts. We describe accurately the procedure for $\star = \top$, and briefly explain the modifications
needed for $\star = \ctop$. The procedure for $r=1$ is the same as the one proposed in \cite{dd11}, and thus our algorithm can be seen as a generalization of the one presented in \cite{dd11}.

Finally, Step~\ref{it:alg-remaining} amounts to applying formula~\eqref{eq:alfak} several times.


\subsection{An algorithm for the reduction step} \label{sec:algored}

We describe how Step~\ref{it:alg-reduction-periodic} can be implemented in $O(r)$ operations. This requires concepts and tools from graph theory, that can be found in \cite{cormen}. Technically, there are no floating-point operations, so one could argue that this step has cost $0$ in our model, but nevertheless it is useful to have an efficient way to perform it on a real-world computer.

Consider the undirected multigraph with self loops in which the nodes are the unknowns $X_1,\dots,X_r$, and there is an edge $(X_i,X_j)$ for each equation in which $X_i$ and $X_j$ appear. A self loop arises when an equation contains just one variable, and multiple edges arise when the same two unknowns appear in several equations. 

Reducing the system into irreducible subsystems corresponds to identifying the connected components of this graph, which can be done with $O(r)$ operations, since it has $r$ edges. We now consider each connected component $\mS(\cI_k)$ separately; if the system is irreducible, the corresponding subgraph $(V_k,\mathcal{E}_k)$ has $r_k$ nodes and $r_k$ edges (see Theorem~\ref{thm:equalirre}). 
Removing from $\mathcal{E}_k$ the self loops and the repeated edges (leaving just
one of them for each occurrence), we get a connected subgraph $(V_k,\widetilde{\mathcal{E}}_k)$. If $(V_k,\mathcal{E}_k)$ had two self loops or one self-loop and a multiple edge or two multiple edges or a multiple edge with more than two edges, then $(V_k,\widetilde{\mathcal{E}}_k)$ would be a connected graph with less than $r_k-1$
edges and $r_k$ nodes and this cannot happen.
Thus, there are three possible cases:
\begin{enumerate}
\item[] Case 1. $(V_k,\mathcal{E}_k)$ has no self loops and no multiple edges;
\item[] Case 2. $(V_k,\mathcal{E}_k)$ has one self loop and no multiple edges;
\item[] Case 3. $(V_k,\mathcal{E}_k)$ has no self loops and one double edge.
\end{enumerate}

After removing the self loop or the double edge (if any), choose an arbitrary node of the resulting graph $(V_k,\widetilde{\mathcal{E}}_k)$ as root, and perform a graph visit using breadth-first search (BFS,~\cite{cormen}). Since $(V_k,\widetilde{\mathcal{E}}_k)$ is connected, this visit will find all its vertices and form a predecessor subgraph $\mathcal{T}$ that contains $r_k-1$  edges of $(V_k,\widetilde{\mathcal{E}}_k)$~\cite{cormen}. In any of the three cases above, $\mathcal T$ is a tree obtained from $(V_k,\widetilde{\mathcal{E}}_k)$ removing one edge; let $(i,j)$ be this missing edge. 

The two nodes $i,j$ are connected by a path in $\mathcal{T}$ via their least common ancestor. In Case 1 this path  can be determined from the predecessor subgraph structure: for instance, build the paths from $i$ and $j$ to the root of $\mathcal{T}$ and remove their common final part; in Case 2, we have $i=j$ and the path is empty; in Case 3, the path is the edge in $\mathcal T$ connecting $i$ and $j$. Together with the removed edge $(i,j)$, this path forms a cycle $(\mathcal{C},\mathcal{E}_{\mathcal C})$ in $(V_k,\mathcal{E}_k)$. The graph $(V_k,\mathcal{E}_k\setminus \mathcal{E}_{\mathcal C})$ contains no cycles, because it is a subgraph of the predecessor subgraph. Moreover, in $(V_k,\mathcal{E}_k\setminus \mathcal{E}_{\mathcal C})$ each node is connected to exactly one node of the cycle $\mathcal{C}$ (because if it were connected to more than one, this would form a cycle in $\mathcal{T}$). Hence, $(V_k,\mathcal{E}_k \setminus \mathcal{E}_{\mathcal C})$ is a collection of trees, each containing exactly one node of $\mathcal{C}$. We perform a visit of each of these trees, starting from its unique node $c \in \mathcal{C}$. The variables corresponding to the nodes other than $c$ in this tree can be eliminated one by one, starting from the leaves (in the reverse of the order in which they are discovered by the BFS), with the elimination step described in Section~\ref{sec:redtotwo}, which removes a degree-1 tree from the graph. This elimination procedure reduces the system of equations associated to $V_k$ to the one associated to $\mathcal{C}$, which is a periodic system.

All the steps described above can easily be implemented with $O(r)$ operations---$O(r_k)$ for each connected component---just by operations on the indices. Once we have identified which cycles are formed, the coefficients can be swapped, transposed and conjugated as needed in $O(n^2r)$ operations (in-place, if one wishes to minimize the space overhead).

\subsection{Solving the triangular system 
}
\label{sec:alg-backsubstitution}

We consider the block-triangular system~\eqref{eq:eiej} with $s=\top$,
ordered according to~\eqref{eq:list}, as described in Lemma~\ref{lem:orderingtriangular}.  
This system is block upper
triangular with $\frac{n(n+1)}{2}$ diagonal blocks of order $r$ and
$2r$. We refer to the linear systems corresponding to these diagonal blocks as the
{\em small systems} $\mathbb S_{ij}$.

We provide in this section a high-level overview of the solution of this system by
block back substitution, and in Sections~\ref{sec:computingF} and
\ref{sec:solvingB} we describe how to perform it within the required
computational cost.

At each of the $\frac{n(n+1)}{2}$ steps of the back substitution
process, we need to solve a square linear system of the form:
\begin{equation}\label{eq:mxij}
	M_{ij} \mathcal X_{ij} = \mathcal{E}_{ij} - \mathcal{F}_{ij},
\end{equation}
with $M_{ij}$ as in \eqref{eq:sii} (when $i=j$) or \eqref{eq:sij} (when $i\neq j$); the vector $\mathcal X_{ij}$ has $r$ (if $i = j$) or $2r$ (if $i\neq j$)
components, obtained by stacking vertically all the entries $(X_1)_{ii},\ldots,(X_r)_{ii}$ (when $i=j$) or $(X_1)_{ij},\ldots,(X_r)_{ij}$ followed by $(X_1)_{ji},\ldots,(X_r)_{ji}$ (when $i\ne j$); the vector $\mathcal{F}_{ij}$ is defined as
$$
\mathcal{F}_{ij}:=\left\{\begin{array}{cc}w_{ii}&\mbox{if $i=j$},\\\left[ \begin{smallmatrix} w_{ij} \\ w_{ji} \end{smallmatrix} \right]&\mbox{otherwise},\end{array}\right.
$$
	where $w_{ij}$ is given by
\begin{equation} \label{eq:Fij}
		w_{ij} := \begin{bmatrix}
			v_{ij1} \\
			\vdots \\
			v_{ijr} \\
                      \end{bmatrix}, \ \ v_{ijk} :=
                      \sum_{\substack{s\ge i, t\ge j \\ (s,t)\neq(i,j)}}
                      \left(( A_k )_{is} ( X_k )_{st} ( B_k )_{tj} - (
                        C_k )_{is} ( X_{k+1} )_{st} ( D_k
                        )_{tj}\right);
\end{equation}
and $\mathcal{E}_{ij}$ contains all the entries in position $(i,j)$ (when $i=j$) or $(i,j)$ and $(j,i)$ (when
$i \neq j$) of $E_1, \ldots, E_r$ stacked vertically, according to the
order in $\mathcal{F}_{ij}$. We identify
$X_{r+1}$ with $X_1^\star$ for simplicity. 

Note that the values of the unknowns appearing in $\mathcal{F}_{ij}$
have been already computed if the linear systems are solved by block back substitution in the reverse of the order in~\eqref{eq:list}. 

The case $s = \ctop $ can be handled in a similar way, even if the
associated system $\mathbb S$ is nonlinear. In Section~\ref{sec:linearizing}, we have seen how the system can be linearized over $\C$ by doubling the number of equations. In order to use real arithmetic, here we follow a different approach: we consider
it as a larger linear system over $\R$ of double the dimension in
the variables $\re(\mathcal X_{ij})$ and $\im(\mathcal X_{ij})$.
More precisely, the system $\mathbb S_{ii}$, when $s = \ctop $, is equivalent
to the linear system over $\mathbb R$ defined, for $r>1$, by
\[
	\begin{bmatrix}
		\alpha_1 & \beta_1 \\
		& \ddots & \ddots  \\
		&& \ddots & \beta_{r-1} \\
		\beta_r &&& \alpha_r \\
	\end{bmatrix}
	\begin{bmatrix}
		Z_1 \\
		\vdots \\
		Z_{r-1} \\
		Z_r \\
	\end{bmatrix} =
	\begin{bmatrix}
		U_1 \\
		\vdots \\
		U_{r-1} \\
		U_r \\
	\end{bmatrix},
\qquad 
\left\{
\begin{array}{ll}
Z_k & =\begin{bmatrix}
\re(X_k)_{ii}\\
\im(X_k)_{ii}
\end{bmatrix},\\[2ex]
U_k & =\begin{bmatrix}
\re((E_k)_{ii}-(v_{ii})_{k})\\
\im((E_k)_{ii}-(v_{ii})_{k})
\end{bmatrix},
\end{array}
\right.
\]
where $\alpha_k, \beta_k$ are $2 \times 2$ matrices defined, respectively, by
\[
	\begin{bmatrix}
		\re((A_k)_{ii} (B_k)_{ii}) & - \im((A_k)_{ii} (B_k)_{ii}) \\
		\im( ( A_k )_{ii} ( B_k )_{ii} ) & \re( ( A_k)_{ii} ( B_k )_{ii} ) \\
	\end{bmatrix}, \quad
	-\begin{bmatrix}
		\re((C_k)_{ii} (D_k)_{ii}) & - \im((C_k)_{ii} (D_k)_{ii}) \\
		\im( ( C_k )_{ii} ( D_k )_{ii} ) & \re( ( C_k)_{ii} ( D_k)_{ii} ) \\
	\end{bmatrix}, 
\]
when $k < r$, and by 
\[
	\begin{bmatrix}
		\re((A_r)_{ii} (B_r)_{ii}) & - \im((A_r)_{ii} (B_r)_{ii}) \\
		\im( ( A_r )_{ii} ( B_r )_{ii} ) & \re( ( A_r)_{ii} ( B_r )_{ii} ) \\
	\end{bmatrix}, \quad
	-\begin{bmatrix}
		\re((C_r)_{ii} (D_r)_{ii}) & \im((C_r)_{ii} (D_r)_{ii}) \\
		\im( ( C_r )_{ii} ( D_r )_{ii} ) & - \re( ( C_r)_{ii} ( D_r )_{ii} ) \\
	\end{bmatrix}, 
\]
when $k = r$. Notice that the only differences between the two cases 
are the signs in the matrix on the right; this is due to
the conjugation appearing in the last equation. 

For $r=1$, the matrix coefficient is 
\[
	\begin{bmatrix}
		\re((A_1)_{ii} (B_1)_{ii}) & - \im((A_1)_{ii} (B_1)_{ii}) \\
		\im( ( A_1 )_{ii} ( B_1 )_{ii} ) & \re( ( A_1)_{ii} ( B_1 )_{ii} ) \\
	\end{bmatrix}-
	\begin{bmatrix}
		\re((C_1)_{ii} (D_1)_{ii}) & \im((C_1)_{ii} (D_1)_{ii}) \\
		\im( ( C_1 )_{ii} ( D_1 )_{ii} ) & - \re( ( C_1)_{ii} ( D_1)_{ii} ) \\
	\end{bmatrix}. 
\]

The systems obtained for
$\mathbb S_{ij}$ are defined similarly.

We will show, in Section~\ref{sec:computingF}, that the components $v_{ijk}$ can be computed recursively so that, for each
$(i,j)$, the computation of $\mathcal F_{ij}$ requires only
$O(nr)$ flops.

Moreover, we will show, in Section~\ref{sec:solvingB}, that the system
$M_{ij} \mathcal X_{ij} = \mathcal{E}_{ij} - \mathcal{F}_{ij}$, once
the right-hand side term has been computed, can be solved in linear
time, that is in $O(r)$ flops, thanks to the special structure
of the matrix $M_{ij}$.

With all the above tools we can formulate Algorithm~\ref{alg:fij} to
compute the solution of a periodic system of $r$ generalized Sylvester
equations whose coefficients are in upper and lower triangular form as
in Section~\ref{sec:triangular}. Besides the computation of the
solution $X_k$, the routine also computes the matrices
$X_k B_k$ and $X_{k+1} D_k$, here denoted $X^B_k$ and $X^D_k$, respectively,
which are needed for an efficient computation of the right-hand side
$\mathcal E_{ij}-\mathcal F_{ij}$ of the linear system. 

\begin{algorithm}
	\caption{Solution of a periodic system of generalized $\star$-Sylvester equations
	}
	\begin{algorithmic}[1]
		\Procedure{GeneralizedStarSylvesterSystem}{$A_k, B_k, C_k, D_k, E_k$}
		\For{$k = 1, \ldots ,r$}
		\State $X_k \gets 0_{n \times n}$ \Comment{we store the solution here}
		\State $X^B_k \gets 0_{n \times n}$ \Comment{storage for $X_k^B$}
		\State $X^D_k \gets 0_{n \times n}$ \Comment{storage for $X_k^D$}
		\EndFor
		\For{$(i,j) \in \{1,2,\dots,n\}^2$ with $i\geq j$, in the reverse of the ordering~\eqref{eq:list}}
			\State $\mathcal F_{ij} \gets \Call{ComputeF}{X_k,X^B_k, X^D_k,A_k,B_k,C_k,D_k, i, j}$
			\State $x \gets \Call{SolveIntermediateSystem}{M_{ij}, \mathcal E_{ij} - \mathcal F_{ij}}$ 
			\For{$k = 1, \ldots, r$}
				\State $[ X_k ]_{ij} \gets x_k$
				\State $[ X^B_k ]_{ij} \gets ( e_i^\top X_k ) ( B_k e_j )$ 
				\State $[ X^D_k ]_{ij} \gets ( e_i^\top X_{k+1} ) ( D_k e_j )$ \Comment{with the convention $X_{r+1}=X_1^\star$}
				\If{$i \neq j$}
				\State $[ X_k ]_{ji} \gets x_{r+k}$
				\State $[ X^B_k ]_{ji} \gets ( e_j^\top X_k ) ( B_k e_i )$ 
				\State $[ X^D_k ]_{ji} \gets ( e_j^\top X_{k+1} ) ( D_k e_i )$ \Comment{with the convention $X_{r+1}=X_1^\star$}
				\EndIf
			\EndFor
		\EndFor
		\State \Return $X_k$
		\EndProcedure
	\end{algorithmic}
	\label{alg:fij}
\end{algorithm}

Section~\ref{sec:computingF} is devoted to describe the routine
\Call{ComputeF}{}, that computes $\mathcal F_{ij}$ in the right-hand side of the systems $\mathbb S_{ij}$, while Section~\ref{sec:solvingB} describes
the solution of the system, that is the routine
\Call{SolveIntermediateSystem}{}.
An algorithmic description
of the former is given in Algorithm~\ref{alg:computeF}, while
the latter procedure is outlined in algorithmic form in the proof of
Lemma~\ref{lem:solvingB}. A FORTRAN implementation of the algorithm
is available at \url{https://github.com/numpi/starsylv/}. 


\subsection{Computing the term $\mathcal{F}_{ij}$}
\label{sec:computingF}

The computation of the term $\mathcal{F}_{ij}$, if evaluated directly
using Equation~(\ref{eq:Fij}), requires $O(n^2r)$
multiplications and additions. However, by reusing some intermediate quantities
computed in the previous steps, the computation can be carried out in
$O(nr)$ flops.

We rearrange
the first term in the definition of $v_{ijk}$ (and similarly for $v_{jik}$) as follows:
\begin{align*}
  \sum_{\substack{s \geq i, t \geq j \\ (s,t) \neq (i,j)}} ( A_k )_{is} ( X_k )_{st} ( B_k )_{tj} &= 
    \sum_{t > j} ( A_k )_{ii} ( X_k )_{it} ( B_k )_{tj} + \sum_{\substack{s > i, t \geq j}} ( A_k )_{is} ( X_k )_{st} ( B_k )_{tj}. 
\end{align*}
The first summand in the right-hand side of the above equation can be computed in $O(n)$ flops for a given
$k$, so we only need to deal with the efficient evaluation of the latter summand. We can re-arrange
it  as follows:
\[
	\sum_{s>i, t \geq j} ( A_k )_{is} ( X_k )_{st} ( B_k )_{tj} =
	\sum_{s > i} ( A_k )_{is} \underbrace{\sum_{t \geq j} ( X_k )_{st} ( B_k )_{tj}}_{=: (X^B_{k})_{sj}} =:
	\sum_{s > i} ( A_k )_{is} (X^B_{k})_{sj},
\]
and this can be computed in $O(n)$ flops if $(X_k^B)_{sj}$, for $s>i$,
is known. After solving the block with indices $\mathcal{L}_{ij}$, we 
compute and store $(X_k^B)_{ij}$ and $(X_k^B)_{ji}$ (if different), and use them in the
subsequent steps. Notice that the computation of $(X_k^B)_{ij}$
requires only $O(n)$ operations since $(X^B_k)_{ij}$ is the element in
position $(i,j)$ of the product $X_k B_k$, and it depends only on entries of
$X_k$ that have already been computed, thanks to the triangular
structure of $B_k$.
 
In Algorithm~\ref{alg:fij},
$(X_{k}^B)_{sj}$ has been precomputed in the previous steps, 
after the computation of $(X_k)_{sj}$.
Thus, we
can evaluate the first addend of $v_{ijk}$ by computing a
summation of $O(n)$ elements, so by means of $O(n)$ flops.

Setting $X_{r+1}:=X_1^\star$, a similar formula holds for the second term, which can be written
as
\begin{align*}
  \sum_{\substack{s \geq i, t \geq j \\ (s,t) \neq (i,j)}} ( C_k )_{is} ( X_{k+1} )_{st} ( D_k )_{tj} &= 
    \sum_{t > j} ( C_k )_{ii} ( X_{k+1} )_{it} ( D_k )_{tj} + \sum_{\substack{s>i}} ( C_k )_{is} \underbrace{\sum_{t>j}( X_{k+1} )_{st} ( D_k )_{tj}}_{:=(X_k^D)_{sj}}, 
\end{align*}
and can be computed in ${O}(n)$ by storing the computed
$(X_k^D)_{sj}$ at every step, as with $(X^B_k)_{sj}$.

An algorithmic description of the above process, which can be plugged
directly into Algorithm~\ref{alg:fij}, is given in Algorithm~
\ref{alg:computeF},
and clearly requires $O(nr)$ arithmetic operations. Notice that in Algorithm~\ref{alg:computeF} all scalar products
are computed on the complete rows and columns of the matrices $X_1,\ldots,X_r$. This is
for notational convenience, but the formulation of Algorithm \ref{alg:computeF} is equivalent to \eqref{eq:Fij},
thanks to the initialization to zero of $X_k, X_k^B$, and $X^D_k$, for $k=1,\ldots,r$.
Nevertheless, in the implementation it is convenient to skip all
the entries that are known to be zero. 

\begin{algorithm}
  \caption{Subroutines used to compute the entries of $\mathcal F_{ij}$, which is
    part of the right-hand side of the linear system.}
  \begin{algorithmic}[1]
	\Procedure{ComputeF}{$X_k,X_k^B, X_k^D, A_k, B_k, C_k, D_k,i, j$}
	\If{$i = j$}
		\State $F \gets$ \Call{ComputeW}{$X_k, X_k^B, X_k^D, A_k, B_k, C_k, D_k,i, j$}
	\Else
		\State $F(1:r) \gets$ \Call{ComputeW}{$X_k, X_k^B, X_k^D, A_k, B_k, C_k, D_k,i, j$}
		\State $F(r+1:2r) \gets$ \Call{ComputeW}{$X_k, X_k^B, X_k^D, A_k, B_k, C_k, D_k,j, i$}
	\EndIf
    \State \Return $F$
    \EndProcedure
    \Procedure{ComputeW}{$X_k,X_k^B, X_k^D, A_k, B_k, C_k, D_k,i, j$}
    \State $F \gets 0_{r}$
    \For{$k = 1, \ldots, r$}
    \State $f_1 \gets (A_k)_{ii} (e_i^\top X_k) (B_k e_j) +  (e_i^\top A_k) (X^B_{k} e_j)$
    \State $f_2 \gets (C_k)_{ii} (e_i^\top X_{k+1}) (D_k e_j) + (e_i^\top C_k) (X^D_k        e_j)$ \Comment{With $X_{r+1}=X_1^\star$}
    \State $F_{k} \gets f_1 + f_2$
    \EndFor
    \State \Return $F$
    \EndProcedure
  \end{algorithmic}
  \label{alg:computeF}
\end{algorithm}

\begin{remark}
  In Algorithm~\ref{alg:fij}, we have shown that it is possible
  to compute $(X_{k}^B)_{ij}$ and $(X^D_k)_{ij}$ after the
  solution of the linear system. In fact, a careful look at the
  algorithm shows that the scalar products
  \[
    [ X^B_k ]_{ij} \gets ( e_i^T X_k ) ( B_k e_j ), \qquad 
    [ X^D_k ]_{ij} \gets ( e_i^T 
    X_{k+1}              ) ( D_k e_j )
  \]
  can be avoided. All non-zero elements in the above summations,
  except the ones corresponding to the diagonal entries of $X_k$
  and $B_k$ or $D_k$, are already computed and summed up in
  \Call{ComputeF}{}. Thus, the entries in position
  $(i,j)$ of $X^B_k$ and $X^D_k$ can be computed with
  an $O(1)$ update of these partial sums. This does not
  change the asymptotic cost, but slightly improves the
  timing and it has been exploited in the implementation. However,
  we decided to avoid describing it in detail in the pseudocode for the
  sake of simplicity. 
\end{remark}

\begin{remark}
For simplicity, both here in the pseudocode and in the implementation used in the experiments, we have allocated $2rn^2$ additional memory entries to store the matrices $X_k^B$ and $X_k^D$. However, it is possible to implement the algorithm allocating with only $O(r+n)$ additional memory if one can overwrite the input matrices $A_k,B_k,C_k,D_k,E_k$. Indeed, while computing the periodic Schur form as described in Lemma~\ref{thm:lemtriang}, one can use the upper triangular part of $A_k,B_k,C_k,D_k$ to store $\widehat{A}_k,\widehat{B}_k,\widehat{C}_k,\widehat{D}_k$ and their lower triangular parts to store in compressed format the orthogonal matrices $Q_k,\widehat{Q}_k, Z_k, \widehat{Z}_k$. Then, one overwrites $E_k$ with $\widehat{E}_k$. Afterwards, the 
matrices $Q_k,\widehat{Q}_k$ are not needed anymore, and with some index juggling one can overwrite the $rn(n-1)$ entries used to store them with the entries of $X_k^B$ and $X_k^D$, discarding those that are not needed anymore. 
The entries of the solution $X_k$ can overwrite those of $\widehat{E}_k$.
\end{remark}


\subsection{Solving the small linear systems}
\label{sec:solvingB}

We describe how to efficiently solve the linear system \eqref{eq:mxij} involving
the matrix $M_{ij}$. The cases $i = j$ and $i \neq j$
are different in the dimension of the matrix, but share
the same structure, so we can handle them at the same time.
More precisely, we have the following result for $\star = \top$. 
\begin{lemma}
  \label{lem:solvingB}
  Let $M$ be an $\ell \times \ell$
  matrix such that the elements in position $(i,j)$
  are allowed to be nonzero only if $0 \leq j - i \leq 1$ or if $(i,j) = (\ell, 1)$.
  Then $M$ admits a QR factorization $M = QR$
  where $R$ is upper bidiagonal except in the last column, and $Q$ is
  a product of $\ell - 1$ plane rotations. 
\end{lemma}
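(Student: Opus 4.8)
I plan to build the factorization explicitly by a classical Givens sweep that eliminates the lone corner entry $M_{\ell 1}$ and ``chases'' it rightwards along the last row until it disappears. For $k=1,\dots,\ell-1$ I would let $G_k$ denote a plane (Givens) rotation acting only on the coordinates $\{k,\ell\}$, chosen so that left multiplication by $G_k$ annihilates the entry in position $(\ell,k)$ of the matrix it is applied to; such a rotation always exists, whatever the two entries involved, and it is unitary. Setting $M^{(0)}:=M$, $M^{(m)}:=G_m M^{(m-1)}$ for $m=1,\dots,\ell-1$, $R:=M^{(\ell-1)}$ and $Q:=G_1^{\ctop}\cdots G_{\ell-1}^{\ctop}$, one gets $M=QR$ with $Q$ a product of $\ell-1$ plane rotations, so the whole content of the statement reduces to checking the sparsity pattern of $R$.

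The key step is to track the nonzero pattern through the sweep by induction on $m$. The hypothesis on $M$ says that row $i$ has support contained in $\{i,i+1\}$ for $i<\ell$ and row $\ell$ has support contained in $\{1,\ell\}$. I would prove that, for $0\le m\le\ell-1$, the matrix $M^{(m)}$ has row $i$ supported in $\{i,i+1,\ell\}$ when $1\le i\le m$, row $i$ supported in $\{i,i+1\}$ when $m<i<\ell$, and row $\ell$ supported in $\{m+1,\ell\}$ (read as $\{\ell\}$ when $m=\ell-1$). The base case $m=0$ is exactly the hypothesis. For the inductive step (with $0\le m\le\ell-2$), $G_{m+1}$ replaces rows $m+1$ and $\ell$ by linear combinations of these two and leaves every other row unchanged; since by the inductive hypothesis row $m+1$ is supported in $\{m+1,m+2\}$ and row $\ell$ in $\{m+1,\ell\}$, both new rows are supported in $\{m+1,m+2,\ell\}$, and the defining property of $G_{m+1}$ forces the new $(\ell,m+1)$ entry to vanish, so the new row $\ell$ is supported in $\{m+2,\ell\}$. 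This is precisely the asserted pattern at level $m+1$, which completes the induction.

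Taking $m=\ell-1$, every nonzero entry $R_{ij}$ of $R=M^{(\ell-1)}$ then satisfies $j\in\{i,i+1,\ell\}$; in particular $j\ge i$, so $R$ is upper triangular, and $R_{ij}=0$ unless $j\in\{i,i+1\}$ or $j=\ell$, which is exactly the assertion that $R$ is upper bidiagonal except in its last column. The degenerate cases $\ell\le 1$ are trivial ($Q$ is the empty product, $R=M$), and $\ell=2$ is already covered by the general argument. I do not expect a genuine obstacle here: the only delicate point is the bookkeeping of the supports of the two active rows at each sweep step, together with the observation that $G_{m+1}$ acts only on rows $m+1$ and $\ell$ and hence never disturbs the rows already driven to their final pattern.
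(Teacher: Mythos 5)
Your proof is correct and takes essentially the same approach as the paper: both construct $R$ by a Givens sweep that annihilates the corner entry $(\ell,1)$ and chases the fill along the last row using rotations acting on rows $\{k,\ell\}$ for $k=1,\dots,\ell-1$. The only difference is presentational, as you unroll the paper's structural recursion (where the trailing $(\ell-1)\times(\ell-1)$ block is observed to have the same shape and is handled by the inductive hypothesis) into an iterative sweep with an explicit invariant on the row supports, but the underlying algorithm and inductive argument coincide.
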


\begin{proof}
	The proof is constructive and by induction. The case $\ell = 1$ is
	trivial, so let us assume that we have an $(\ell + 1) \times (\ell + 1)$
	matrix $M$, so that we can compute a rotation $G$ acting on
	the first and last row that annihilates the elements in position
	$(\ell + 1, 1)$. More precisely
	\[
		GM=G \begin{bmatrix}
			\times & \times \\
			& \ddots & \ddots \\
			& & \times & \times \\
			\times &&& \times \\
		\end{bmatrix} =
		\left[ \begin{array}{c|ccc}
			a_1 & b_1 && x_1 \\ \hline
			&  &  \\
			& & \widetilde M & \\
			& &&  \\      
		\end{array} \right],
\]
where $\widetilde M$ has the same shape as $M$,
but is of size $\ell \times \ell$. Therefore, we can factorize
$\widetilde M= \widetilde Q \widetilde R$, with $\widetilde Q$ being
the product of $\ell - 1$ rotations. Setting
$Q := G^\star \left[\begin{smallmatrix} 1 & 0 \\ 0 & Q\end{smallmatrix}\right]$
and
\[
	R =     \left[ \begin{array}{c|ccc}
			a_1 & b_1 && x_1 \\ \hline
			&  &  \\
			& & \widetilde R & \\
			& &&  \\      
		\end{array} \right]
\]
concludes the proof. 
\end{proof}

The above proof shows that the matrices $Q$ and $R$ can be computed in
$O(\ell)$, and then the linear system $Mx = QRx=  y$ can be solved in
$O(\ell)$ by the application of $O(\ell)$ rotations to $y$ (each of these operations can be done in $O(1)$) and by a back substitution, that, thanks to the sparsity of $R$, can be computed in $O(\ell)$ as well.

In our case the matrix of the linear system
has $\ell \in \{ r, 2r \}$, so
we can solve each intermediate linear system in $O(r)$.

The case $\star = \ctop $ is not much different, since the
matrices $M_{ij}$ of the linear system are block bidiagonal
(except for the block at the end of the first column), with $2 \times 2$
blocks. In fact, the matrices $M_{ij}$ can be brought into upper
triangular form using about $5 r$ rotations, and the upper
triangular form enjoys a block bidiagonal form that allows us to
solve the linear system in $O(r)$. 

 Lemma \ref{lem:solvingB} can be easily converted into a routine
and provides a possible implementation for \Call{SolveIntermediateSystem}{}
in Algorithm~\ref{alg:fij}. An implementation for this routine
can be found in the code used for the tests, available at
\url{https://github.com/numpi/starsylv/}.


\subsection{Computational cost and storage}
\label{sec:computational-cost}

We evaluate the total computational cost of the algorithm (in terms of floating-point operations)
by taking into account the cost of all single steps.

Step~\ref{it:alg-reduction-periodic} requires only some bookkeeping and possibly swapping and transposing matrices in memory, but no floating point operations. This step produces several periodic systems; let $r_1,r_2,\dots, r_m$ be their sizes, with $r_1+\dots+r_m \leq r$. We prove that each of these systems is solved using $O(n^3r_i)$ flops.

Step~\ref{it:alg-reduction-triangular} (for the $i$th periodic system of size $r_i$) requires computing a periodic Schur form, which costs $O(n^3r_i)$ with the algorithm of~\cite{bojanczyk1992periodic}. Once the periodic Schur form has been computed, the changes of variables amount to $O(r_i)$ products between $n\times n$ matrices.

In Step~\ref{it:alg-backsubstitution}, the method described in Section~\ref{sec:computingF} allows one to compute each of the $\frac{n(n+1)}{2}$ terms $\mathcal{F}_{ij}$ in $O(nr_i)$ flops, and Section~\ref{sec:solvingB} shows how to solve in $O(r_i)$ flops the linear systems required in each of the $\frac{n(n+1)}{2}$ back substitution steps. The total amount of flops required by this step is, thus, $O(n^3r_i)$.

Step~\ref{it:alg-remaining} requires applying formula~\eqref{eq:alfak} (which costs $O(n^3)$ to compute) once for each remaining variable, that is, at most $r-1$ times.

Combining all the above steps we obtain an algorithm with a total cost of
$O(n^3 r)$ flops. Moreover, the only storage required
is the one of $O(r)$ matrices of size $n \times n$, so the storage
required is $O(n^2 r)$, which is optimal (given that the same amount
of storage is required to store the solutions).

\begin{remark} \label{rem:graphtheory}
	Step~\ref{it:alg-reduction-periodic} requires some discrete computations on the indices to identify the periodic systems and eliminate variables and equations; we have ignored them here since they involve no floating-point operations, but we have shown in Section \ref{sec:algored} that they can be performed in $O(r)$ operations with the help of a graph algorithm.
\end{remark}


\subsection{Backward error analysis}
\label{sec:backward-error}

Here we provide a backward error analysis of
the algorithm described in the previous sections. We use the standard floating point number model with unit roundoff $u$ and, for an expression $\ell$, we denote by $\fl(\ell)$ the computed value of $\ell$ using floating point operations. We use the notation 
$$
\gamma_k:=\frac{cku}{1-cku},
$$
where $c$ denotes a small constant, whose exact value is not relevant (see \cite[p. 68]{higham-accuracy}).

We assume that all linear systems $Ax=b$ that are encountered
are solved using a backward stable method. More precisely, we say that an algorithm to solve a linear system $Ax=b$, with $A\in\C^{m\times m}$, has {\em backward error $\varepsilon_A$} if the computed solution $\tilde{x}=\fl(A^{-1}b)$ is the exact solution of a perturbed system $(A+\delta A)\tilde{x}=b$, with $\norm{\delta A}_2/\norm{A}_2\leq\varepsilon_A$. Note that only the coefficient matrix is perturbed (see \cite[Th. 19.5]{higham-accuracy} and the following discussion for an explanation). In the case of solving the system with the QR factorization using $s$ Givens rotations, as we do in Section~\ref{sec:solvingB} with $s=O(r)$,
this quantity can be taken as
$
\varepsilon_A=m\cdot\gamma_{s}
$
(see p. 368 and Theorem 19.10 in \cite{higham-accuracy}). 
The factor $m$ comes from the fact that the bound in \cite{higham-accuracy} is only given column-wise and 
\begin{equation}\label{normbound}
\norm{{\rm Col}_j A}_2\leq\norm{A}_2\leq\sqrt{m}\norm{A}_1=\sqrt{m}\max_{j=1,\hdots,m}\norm{{\rm Col}_jA}_1\leq
m\max_{j=1,\hdots,m}\norm{{\rm Col}_jA}_2,
\end{equation}
for all $j=1,\hdots,m$, where $\mathrm{Col}_jA$ is the $j$th column of $A$ (see, for instance, \cite[Tables 6.1 and 6.2]{higham-accuracy} for the last two inequalities).

We obtain a backward error result formulating the problem as a vectorized linear system. For simplicity, we will focus on periodic systems with upper and lower triangular coefficients in Theorem \ref{th:backstab}. The general case will be commented right after the proof. 

\begin{theorem} \label{th:backstab}
	Consider a system of equations of the form~\eqref{eq:periodic}, with $A_k,C_k,B_k^\top,D_k^\top$ being upper triangular, and
	let $M\mathcal{X}=\mathcal{E}$ be its vectorized form, where $M\in\mathbb{C}^{rn^2\times rn^2}$ 
	if $\star=\top$, or $M\in\mathbb{R}^{2rn^2\times 2rn^2}$ if $\star=\ctop $.

	When implemented in standard floating-point arithmetic, the algorithm described 
	in Sections~{\rm\ref{sec:alg-backsubstitution}--\ref{sec:solvingB}} produces a result $\tilde{\mathcal{X}}$ satisfying
	\begin{equation} \label{eq:backstab}
		(M+\delta M)\tilde{\mathcal{X}} = \mathcal{E}+\delta \mathcal{E},
	\end{equation}
	with $\norm{\delta M}_2 / \norm{M}_2\leq {r}\,\gamma_r+\gamma_{n^2}(1+{r}\,\gamma_r)$, $\norm{\delta \mathcal{E}}_2 / \norm{\mathcal{E}}_2\leq\gamma_{n^2}$.
\end{theorem}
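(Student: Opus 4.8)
The plan is to view the algorithm of Sections~\ref{sec:alg-backsubstitution}--\ref{sec:solvingB} as a block back substitution on the vectorized system $M\mathcal X=\mathcal E$ which, after the reordering of Lemma~\ref{lem:orderingtriangular}, is block upper triangular with $\tfrac{n(n+1)}{2}$ diagonal blocks equal to the matrices $M_{ij}$ of \eqref{eq:sii}--\eqref{eq:sij}; for $\star=\ctop$ one first replaces it by the real system in $\re(\mathcal X),\im(\mathcal X)$ of Section~\ref{sec:alg-backsubstitution}, which has the same block-triangular shape with $2\times 2$ real blocks. The off-diagonal blocks are assembled from the strictly off-diagonal entries of $A_k,B_k,C_k,D_k$, and by construction, at the $(i,j)$-th step the vector $\mathcal F_{ij}$ of \eqref{eq:Fij} is exactly the rounded product of the off-diagonal part of the $(i,j)$-th block row with the already computed blocks of $\tilde{\mathcal X}$. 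I would process the block rows in the reverse of the ordering~\eqref{eq:list} and collect, block row by block row, two separate contributions to the backward perturbation.

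First, the formation of the right-hand side. Each scalar entry of $\mathcal F_{ij}$ is mathematically a sum of at most $2n^2$ products of an entry of $A_k,B_k,C_k,D_k$ with an entry of the already computed solution; whether it is evaluated directly from \eqref{eq:Fij} or through the reuse of the stored inner products $X_k^B,X_k^D$ as in Algorithm~\ref{alg:computeF}, a standard rounding analysis of inner products and sums shows that the computed $\tilde{\mathcal F}_{ij}$ equals $(N_{ij}+\Delta N_{ij})\tilde{\mathcal X}^{(>ij)}$, with $N_{ij}$ the exact off-diagonal block row, $\tilde{\mathcal X}^{(>ij)}$ the already computed blocks, and $|\Delta N_{ij}|\le\gamma_{n^2}|N_{ij}|$ entrywise, and that the subtraction $\fl(\mathcal E_{ij}-\tilde{\mathcal F}_{ij})$ perturbs the datum $\mathcal E_{ij}$ by a relative amount at most $\gamma_{n^2}$; collecting the latter over all block rows gives $\delta\mathcal E$ with $\|\delta\mathcal E\|_2\le\gamma_{n^2}\|\mathcal E\|_2$. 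Second, the solve. The reduced system $M_{ij}\mathcal X_{ij}=\fl(\mathcal E_{ij}-\tilde{\mathcal F}_{ij})$ is solved by the QR scheme of Lemma~\ref{lem:solvingB} and Section~\ref{sec:solvingB}, which acts on $\ell\le 2r$ rows with $s=O(r)$ Givens rotations; by \cite[p.~368 and Thm.~19.10]{higham-accuracy} together with the column-versus-matrix norm estimates of~\eqref{normbound} it is backward stable with $\varepsilon_{M_{ij}}=\ell\,\gamma_s$, which is at most $r\,\gamma_r$ after absorbing the constants into $c$, so $\tilde{\mathcal X}_{ij}$ solves exactly $(M_{ij}+\delta M_{ij})\tilde{\mathcal X}_{ij}=\fl(\mathcal E_{ij}-\tilde{\mathcal F}_{ij})$ with $\|\delta M_{ij}\|_2\le r\,\gamma_r\|M_{ij}\|_2$.

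Now assemble. Substituting $\fl(\mathcal E_{ij}-\tilde{\mathcal F}_{ij})=\mathcal E_{ij}+\delta\mathcal E_{ij}-(N_{ij}+\Delta N_{ij})\tilde{\mathcal X}^{(>ij)}$, the $(i,j)$-th block row becomes exactly the $(i,j)$-th block row of $(M+\delta M)\tilde{\mathcal X}=\mathcal E+\delta\mathcal E$, where $\delta M$ has block-diagonal part $\bigoplus_{ij}\delta M_{ij}$ and strictly block-upper-triangular part $(\Delta N_{ij})$. Since each $M_{ij}$ is (up to the permutation) a principal submatrix of $M$ and a block-diagonal matrix has spectral norm equal to the largest block norm, the block-diagonal part contributes at most $r\,\gamma_r\|M\|_2$; estimating the strictly upper part through~\eqref{normbound} and composing the two perturbations — the off-diagonal data being perturbed by the relative amount $\gamma_{n^2}$ and then fed through a solve of backward error $r\,\gamma_r$ — yields $\|\delta M\|_2/\|M\|_2\le r\,\gamma_r+\gamma_{n^2}(1+r\,\gamma_r)$. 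The case $\star=\ctop$ is the same once the blocks are read as $2\times 2$ real blocks (with $s\approx 5r$ rotations, still $O(r)$), and the statement for non-triangular coefficients follows by prepending the backward-stable periodic Schur reduction of Lemma~\ref{thm:lemtriang}.

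The main obstacle is the middle step: one must check that reusing the stored matrices $X_k^B$ and $X_k^D$ across the $\tfrac{n(n+1)}{2}$ back-substitution steps does not cause the rounding errors to accumulate — each entry of $M$ must end up perturbed only by the error incurred at the single step where it is actually used, plus the one-off error of forming $X_k^B,X_k^D$, which is the block analogue of the fact that back substitution on a triangular system has an entrywise, non-accumulating backward error. The remainder is careful $\gamma_k$-arithmetic and the norm conversions in~\eqref{normbound} to pass from the entrywise bounds to the stated operator-norm inequality.
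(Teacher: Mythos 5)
Your high-level strategy matches the paper's: process the block rows of the reordered system in reverse, separate the error into ``form the right-hand side'' and ``solve the small block system'', and reassemble. The paper, however, factors the second source through two dedicated lemmas (Lemma~\ref{lem:deltad}, which absorbs a perturbation of the product $y=(N+\Delta N)x$ into the matrix factor, and Lemma~\ref{lem:backstab}, which gives the backward error of the ``form $b-\sum_k N_kc_k$, then solve $Fx=\cdot$'' pattern), and it is precisely that intermediate machinery that your sketch skips, which creates two concrete problems.

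First, your derivation of the final constant is not internally consistent. You claim $|\Delta N_{ij}|\le\gamma_{n^2}|N_{ij}|$ entrywise for the whole off-diagonal block row, and then you say you ``compose'' this with the solve's backward error $r\gamma_r$ to reach $r\gamma_r+\gamma_{n^2}(1+r\gamma_r)$. But the solve's backward error is a perturbation of the \emph{diagonal} block $M_{ij}$ only --- the right-hand side $\fl(\mathcal E_{ij}-\tilde{\mathcal F}_{ij})$ enters the solve as a fixed exact vector, so nothing from the QR step is ``fed through'' the off-diagonal data. In the paper, the factor $r\gamma_r$ appearing in the off-diagonal bound is the $m\gamma_m$ term of Lemma~\ref{lem:backstab} with $m\in\{r,2r\}$: it comes from the inner products of length $m$ in the blockwise matrix--vector products $N_{st}^{(ij)}\mathcal X_{st}$ (after the column-to-operator norm conversion of~\eqref{normbound} on an $m\times m$ matrix). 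The extra $\gamma_{n^2}$ and the cross-term $\gamma_{n^2}\cdot r\gamma_r$ then arise from the summation over the $O(n^2)$ blocks and the absorption step of Lemma~\ref{lem:deltad}. Collapsing all of this into a single length-$O(n^2)$ inner product with entrywise error $\gamma_{n^2}$ loses the $m\gamma_m$ piece, and the norm conversion you invoke via~\eqref{normbound} is stated for a square $m\times m$ matrix, not for the wide $(r\text{ or }2r)\times O(n^2r)$ off-diagonal block row, so it does not deliver the stated operator-norm bound from an entrywise one.

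Second, you correctly single out the reuse of $X_k^B$ and $X_k^D$ across back-substitution steps as ``the main obstacle'', but your sketch leaves it entirely unresolved, i.e.\ it is named rather than proved. This is precisely where one must be careful (as the paper's Lemma~\ref{lem:backstab} is formulated, for a clean sum-then-solve pattern, one still has to argue that Algorithm~\ref{alg:computeF}'s restructured evaluation satisfies the hypotheses), so a proof proposal that defers it is materially incomplete. To match the paper's argument, you would need the two auxiliary lemmas (or their equivalent) and a careful accounting of which entries of $M$ are perturbed at which step, in addition to the correct attribution of the $r\gamma_r$ term.
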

\begin{remark}\label{rem:residual}
	The reader may wonder if a stronger form of structured backward stability holds:
	the algorithm should produce matrices that satisfy
	\[
		(A_k + \delta A_k)\tilde{X}_{\alpha_k}^{s_k}(B_k+\delta B_k) - (C_k+\delta C_k) \tilde{X}_{\beta_k}^{t_k} (D_k+\delta D_k) = E_k + \delta E_k \quad k=1,\dots, r,
	\]
	with $\norm{\delta S_k}_2/\norm{S_k}_2$ being small, for $S=A,B,C,D,E$. Unfortunately, algorithms of this family fail to be structurally backward stable even in the simplest case of a single
	Sylvester equation $AX-XD=E$, as shown in~\cite[\S 16.2]{hig93} (see also the discussion in~\cite{byers-rhee} for the case $s=1$).

        Note that Theorem~\ref{th:backstab} is nevertheless sufficient to show that the residual of each equation $R_k = \norm{A_k \tilde{X}_{\alpha_k}^{s_k}B_k - C_k\tilde{X}_{\beta_k}^{t_k}D_k - E_k}_F$, for $k=1,2,\dots,r$, is small. Indeed, $\norm{M\tilde{\mathcal{X}}-\mathcal{E}}_2 = \sqrt{\sum_{k=1}^r R_k^2}$ satisfies (see~\cite[Thm~7.1]{higham-accuracy})
\[
\frac{\norm{M\tilde{\mathcal{X}}-\mathcal{E}}_2}{\norm{M}_2\norm{\tilde{\mathcal{X}}}_2 + \norm{\mathcal{E}}_2} \leq \max\left(
\frac{\norm{\delta M}_2}{\norm{M}_2}, \frac{\norm{\delta \mathcal{E}}_2}{\norm{\mathcal{E}}_2}.
\right)
\]
\end{remark}

In order to prove Theorem~\ref{th:backstab}, we need the following technical results.

\begin{lemma}\label{lem:deltad}
Let $N\in\C^{m\times m}$ and $x,y\in\C^m$, with $x,y\neq0$, be such that
\begin{equation}\label{deltan}
y=(N+\Delta N)x,\qquad \frac{\norm{\Delta N}_2}{\norm{N}_2}\leq\varepsilon,
\end{equation}
 for some $\varepsilon>0$. Let $\delta y\in\C^m$ be such that 
\begin{equation}\label{deltay}
\frac{\norm{\delta y}_2}{\norm{y}_2}\leq \kappa,
\end{equation}
for some $\kappa>0$. Then
$
y+\delta y=(N+\delta N)x,
$
for some $\delta N\in\C^{m\times m}$ with 
$
\frac{\norm{\delta N}_2}{\norm{N}_2}\leq \varepsilon+\kappa(1+\varepsilon).$
\end{lemma}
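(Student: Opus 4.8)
The plan is to manipulate the hypotheses directly and extract an explicit expression for $\delta N$. Since $x \neq 0$, I would like to ``absorb'' the perturbation $\delta y$ into the matrix acting on $x$, by writing $\delta y$ as (something)$\cdot x$. The natural choice is a rank-one matrix: set $\delta N := \Delta N + \frac{(\delta y)x^\ctop}{\norm{x}_2^2}$. Then $(N+\delta N)x = (N+\Delta N)x + \delta y \cdot \frac{x^\ctop x}{\norm{x}_2^2} = y + \delta y$, which is exactly the desired identity.

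It remains to bound $\norm{\delta N}_2$. By the triangle inequality, $\norm{\delta N}_2 \leq \norm{\Delta N}_2 + \norm*{\frac{(\delta y)x^\ctop}{\norm{x}_2^2}}_2$. The first term is at most $\varepsilon \norm{N}_2$ by~\eqref{deltan}. For the rank-one term, I would use that the spectral norm of $uv^\ctop$ equals $\norm{u}_2\norm{v}_2$, giving $\norm*{\frac{(\delta y)x^\ctop}{\norm{x}_2^2}}_2 = \frac{\norm{\delta y}_2 \norm{x}_2}{\norm{x}_2^2} = \frac{\norm{\delta y}_2}{\norm{x}_2}$. Now I need to convert the bound on $\norm{\delta y}_2/\norm{y}_2$ from~\eqref{deltay} into a bound involving $\norm{x}_2$: from $y = (N+\Delta N)x$ we get $\norm{y}_2 \leq \norm{N+\Delta N}_2 \norm{x}_2 \leq (1+\varepsilon)\norm{N}_2\norm{x}_2$, hence $\frac{\norm{\delta y}_2}{\norm{x}_2} \leq \kappa \frac{\norm{y}_2}{\norm{x}_2} \leq \kappa(1+\varepsilon)\norm{N}_2$. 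Combining, $\norm{\delta N}_2 \leq \varepsilon\norm{N}_2 + \kappa(1+\varepsilon)\norm{N}_2 = \bigl(\varepsilon + \kappa(1+\varepsilon)\bigr)\norm{N}_2$, which is the claimed bound after dividing by $\norm{N}_2$ (note $N\neq 0$, since otherwise $y=\Delta N\, x$ with $\norm{\Delta N}_2\le \varepsilon\cdot 0=0$ would force $y=0$, contradicting $y\neq0$).

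There is essentially no serious obstacle here; the only thing to be a little careful about is the degenerate case $N = 0$, which as noted cannot occur under the hypotheses $y \neq 0$ and $\norm{\Delta N}_2 \le \varepsilon \norm{N}_2$, so the ratio $\norm{\delta N}_2/\norm{N}_2$ is well-defined. One could alternatively phrase everything multiplicatively to sidestep this, but it is cleaner simply to observe that $N\neq 0$. The mild ``creative'' step is guessing the rank-one correction $\frac{(\delta y)x^\ctop}{\norm{x}_2^2}$; once that is written down, the rest is routine norm bookkeeping.
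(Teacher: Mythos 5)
Your proof is correct and matches the paper's argument essentially line for line: the same rank-one correction $\widetilde N := \norm{x}_2^{-2}(\delta y)x^\ctop$, the same triangle-inequality split, and the same chain $\norm{\delta y}_2 \le \kappa\norm{y}_2 \le \kappa(1+\varepsilon)\norm{N}_2\norm{x}_2$. The only difference is your explicit remark that $N\neq 0$ under the hypotheses, which the paper leaves implicit.
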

\begin{proof}
From \eqref{deltan} and \eqref{deltay} we get
\begin{equation}\label{deltaybound}
\norm{\delta y}_2\leq\kappa\norm{y}_2\leq\kappa\left(\norm{N}_2+\norm{\Delta N}_2\right)\norm{x}_2\leq\kappa(1+\varepsilon)\norm{N}_2\norm{x}_2.
\end{equation}

Now, setting $\widetilde N:=\norm{x}_2^{-2}\cdot(\delta y) x^\ctop $, we have $\widetilde N x=\delta y$ and $\norm{\widetilde N}_2=\norm{\delta y}_2/\norm{x}_2$, so $\norm{\delta y}_2=\norm{\widetilde N}_2\norm{x}_2$. Then, by \eqref{deltaybound},
\begin{equation}\label{widetilden}
\norm{\widetilde N}_2\leq\kappa(1+\varepsilon)\norm{N}_2.
\end{equation}
Finally, taking $\delta N:=\Delta N+\widetilde N$, and using \eqref{widetilden}, we arrive at
$
\norm{\delta N}_2\leq\norm{\Delta N}_2+\norm{\widetilde N}_2\leq(\varepsilon+\kappa(1+\varepsilon))\norm{N}_2.
$
\end{proof}

\begin{lemma} \label{lem:backstab}
	Consider a square linear system of the form 
	$
		Fx = b - \sum_{k=1}^s N_k c_k,
	$
	where $F,N_k\in\C^{m\times m}$, and $b,c_k\in\C^m$ are given, for $k=1,\dots,s$, and $x$ is the unknown.

	Forming the sum in the right-hand side, in floating point arithmetic, and then solving the linear system using an algorithm with backward error $\varepsilon_F$, produces a computed solution $\tilde{x}$ which is the exact solution
	of a perturbed system
	\[
		(F+\delta F)\tilde{x} = b+\delta b - \sum_{k=1}^s (N_k+\delta N_k) c_k,
	\]
	with
	 $$
	 \frac{\norm{\delta F}_2}{\norm{F}_2}\leq\varepsilon_F,
	 \quad \frac{\norm{\delta b}_2}{\norm{b}_2}\leq\gamma_s,\quad
	  \frac{\norm{\delta N_k}_2}{\norm{N_k}_2}\leq m\gamma_m+\gamma_s(1+m\gamma_m).
	 $$
\end{lemma}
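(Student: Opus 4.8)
The plan is to analyze the two sources of floating-point error separately — first the error committed in forming the right-hand side $b - \sum_{k=1}^s N_k c_k$ in floating-point arithmetic, and then the error incurred by the backward-stable linear solver applied to the computed right-hand side — and finally to combine them, pushing the perturbation of the computed right-hand side back onto the data $b$ and the $N_k$ using Lemma~\ref{lem:deltad}. So first I would write $\tilde c := \fl(b - \sum_{k=1}^s N_k c_k)$ for the computed right-hand side. By the standard error analysis of matrix-vector products and summation (e.g.\ \cite[\S3.5, \S4.2]{higham-accuracy}), each computed product $\fl(N_k c_k)$ equals $(N_k + \Delta N_k)c_k$ with $\norm{\Delta N_k}_2/\norm{N_k}_2 \leq m\gamma_m$ (the factor $m$ coming from the column-wise nature of the bound, exactly as in~\eqref{normbound}), and then summing the $s+1$ terms $b, -\fl(N_1c_1), \dots, -\fl(N_sc_s)$ in floating point introduces a further relative perturbation bounded by $\gamma_s$ on each summand. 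Tracking how that summation error distributes over the summands gives $\tilde c = (b+\delta_0 b) - \sum_{k=1}^s (N_k + \Delta N_k + \Delta' N_k)c_k$ with $\norm{\delta_0 b}_2/\norm{b}_2 \leq \gamma_s$ and $\norm{\Delta' N_k}_2/\norm{N_k}_2 \leq \gamma_s$, hence altogether $\tilde c = (b + \delta b) - \sum (N_k + \delta N_k')c_k$ with $\norm{\delta b}_2/\norm{b}_2\leq\gamma_s$ and, combining $\Delta N_k$ and $\Delta' N_k$ (using that $(1+\gamma_s)(1+m\gamma_m)$-type products are absorbed into the stated $m\gamma_m + \gamma_s(1+m\gamma_m)$ form), $\norm{\delta N_k'}_2/\norm{N_k}_2 \leq m\gamma_m + \gamma_s(1+m\gamma_m)$.

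Second, I would invoke the backward stability hypothesis of the linear solver: the computed solution $\tilde x = \fl(F^{-1}\tilde c)$ is the exact solution of $(F+\delta F)\tilde x = \tilde c$ with $\norm{\delta F}_2/\norm{F}_2 \leq \varepsilon_F$. Substituting the expression for $\tilde c$ obtained in the first step yields immediately
\[
(F+\delta F)\tilde x = (b+\delta b) - \sum_{k=1}^s (N_k + \delta N_k')c_k,
\]
which is exactly the claimed perturbed system, with all three perturbation bounds as stated. So the combination step is essentially a substitution; no further use of Lemma~\ref{lem:deltad} is actually needed here, since the solver only perturbs $F$ and the right-hand side error was already attributed to $b$ and the $N_k$ in the first step. (Lemma~\ref{lem:deltad} will be the tool used later, in the proof of Theorem~\ref{th:backstab}, to fold the accumulated right-hand side errors of the many small systems back into a single perturbation of $M$; within this lemma it is not required.)

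The main obstacle, and the only place where care is genuinely needed, is the bookkeeping in the first step: getting the error of the floating-point summation of $s+1$ vector terms to land cleanly as a relative perturbation $\gamma_s$ on \emph{each} summand (including $b$), and then correctly merging it with the matrix-vector product errors so that the final bound on $\delta N_k'$ takes the stated closed form $m\gamma_m + \gamma_s(1+m\gamma_m)$ rather than some messier expression. This is routine but must be done in the right order — first absorb the product error $m\gamma_m$, then the summation error $\gamma_s$ relative to the already-perturbed quantity, which produces the cross term $\gamma_s \cdot m\gamma_m$ — and one must remember to insert the factor $m$ into the matrix-vector product bound using the norm inequalities~\eqref{normbound}, exactly as the discussion preceding the lemma anticipates. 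Everything else is a direct application of hypotheses already in place.
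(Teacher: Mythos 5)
Your overall plan is the same as the paper's, and your decomposition into (i) error in forming $\tilde c=\fl(b-\sum_k N_kc_k)$ and (ii) the backward-stable solve is correct, as is the observation that the solve step contributes only $\delta F$. There is, however, a genuine gap in the first step, and it is exactly the place where you explicitly disclaim needing Lemma~\ref{lem:deltad}.

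The summation error analysis does \emph{not} directly produce a matrix perturbation $\Delta' N_k$ of $N_k$. What it produces is a \emph{vector} perturbation $\delta\tilde d_k$ of the computed product $\tilde d_k=\fl(N_kc_k)$, with $\lvert(\delta\tilde d_k)_i\rvert\leq\gamma_s\lvert(\tilde d_k)_i\rvert$ componentwise, hence $\norm{\delta\tilde d_k}_2\leq\gamma_s\norm{\tilde d_k}_2$. Turning ``$\tilde d_k+\delta\tilde d_k$'' into ``$(N_k+\delta N_k)c_k$ with a norm bound on $\delta N_k$'' requires the rank-one construction $\widetilde N:=\norm{c_k}_2^{-2}\,(\delta\tilde d_k)\,c_k^{\mathsf H}$, and that construction yields
\[
\norm{\widetilde N}_2=\frac{\norm{\delta\tilde d_k}_2}{\norm{c_k}_2}
\leq \gamma_s\,\frac{\norm{\tilde d_k}_2}{\norm{c_k}_2}
\leq \gamma_s(1+m\gamma_m)\,\norm{N_k}_2,
\]
because $\tilde d_k=(N_k+\Delta N_k)c_k$ is already a perturbed quantity whose norm can exceed $\norm{N_k}_2\norm{c_k}_2$ by the factor $1+m\gamma_m$. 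This is precisely the content of Lemma~\ref{lem:deltad} with $\varepsilon=m\gamma_m$ and $\kappa=\gamma_s$, and it is where the cross term $\gamma_sm\gamma_m$ actually comes from. Your intermediate claim $\norm{\Delta' N_k}_2/\norm{N_k}_2\leq\gamma_s$ is therefore both unjustified as stated (a vector relative error does not automatically ``distribute'' onto the matrix factor) and slightly too strong (the correct factor is $\gamma_s(1+m\gamma_m)$); and your remark that Lemma~\ref{lem:deltad} ``is not required within this lemma'' is the opposite of what happens in the paper's proof, which invokes Lemma~\ref{lem:deltad} at exactly this point. Once you restore that step, the rest of your argument goes through and matches the paper.
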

\begin{proof}
	Let $\tilde{d}_k = \fl(N_k c_k)$, $\tilde{f}=\fl(b - \sum_{k=1}^s \tilde{d}_k)$. By hypothesis,
	$(F+\delta F)\tilde{x} = \tilde{f}$, with  $\norm{\delta F}_2/\norm{F}_2\leq\varepsilon_F$. 
	The usual backward error analysis of summation can be used to show that 
	$\tilde{f}  = b + \delta b - \sum_{k=1}^s (\tilde{d}_k  + \delta \tilde{d}_k)$, 
	with $|(\delta b)_i|/|b_i|,|(\delta\widetilde d_k)_i|/|(\widetilde d_k)_i|\leq\gamma_s$, for $i=1,\hdots,m$ (see \cite[Section~4]{higham-accuracy}). Now, by standard backward error analysis of matrix-vector multiplication, we know that
	$
	\tilde{d}_k=(N_k+\Delta N_k)c_k,
	$
	with $\norm{{\rm Col}_j (\Delta N_k)}_2/\norm{{\rm Col}_j(N_k)}_2\leq\gamma_m$, for $j=1,\hdots,m$ (see \cite[Section~3.5]{higham-accuracy}). Using \eqref{normbound}, this implies $\norm{\Delta N_k}_2/\norm{N_k}_2\leq m\gamma_m$. Now, we can apply Lemma \ref{lem:deltad}, with $y=\tilde{d}_k,\delta y=\delta\tilde{d}_k$,  $x=c_k$, $N=N_k$ and $\Delta N=\Delta N_k$, to conclude that
		$
		\tilde{d}_k  + \delta \tilde{d}_k = (N_k+\delta N_k)c_k,
		$
		 with $\norm{\delta N_k}_2/\norm{N_k}_2\leq m\gamma_m+\gamma_s(1+m\gamma_m)$, as wanted.
\end{proof}

\begin{proof}[Proof of Theorem~{\rm\ref{th:backstab}}]
	
	We note that each step of the block back substitution corresponds to
	solving a linear system of the form~\eqref{eq:mxij}. More precisely, this system is
$$
M_{ij}\mathcal{X}_{ij}=\mathcal{E}_{ij}-\sum_{(s,t)\in\mathcal U_{ij}}N_{st}^{(ij)}\mathcal{X}_{st},	
$$
	where $\mathcal U_{ij}=\{(i',j')\,:\, \max\{i',j'\}\ge \max\{i,j\}\mbox{ and }\min\{i',j'\}\ge \min\{i,j\}\}$ and the matrices $N_{st}^{(ij)}$ are given by writing \eqref{eq:Fij} in matrix form. By Lemma~\ref{lem:backstab}, there are some matrices $\delta M_{ij}$ and $\delta N_{st}^{(ij)}$ such that
	$
	(M_{ij}+\delta M_{ij})\mathcal{\widetilde X}_{ij}=\mathcal{E}_{ij}+\delta\mathcal{E}_{ij}-\sum_{(s,t)\in\mathcal U_{ij}}(N_{st}^{(ij)}+\delta N_{st}^{(ij)})\mathcal{\widetilde X}_{st},
	$
	where $\mathcal{\widetilde X}_{ij}$ are the computed solutions at the $(i,j)$ step and $\mathcal{\widetilde X}_{st}$, for $s\ge i, t\ge j$, with $(s,t)\neq(i,j)$, are the ones computed in the previous steps, and
	$$
	\frac{\norm{\delta M_{ij}}_2}{\norm{M_{ij}}_2}\leq\varepsilon_{M_{ij}},\quad\frac{\norm{\delta N_{st}^{(ij)}}_2}{\norm{N_{st}^{(ij)}}_2}\leq r\gamma_r+\gamma_{n^2}(1+r\gamma_r),
	\quad\frac{\norm{\delta\mathcal{E}_{ij}}_2}{\norm{\mathcal{E}_{ij}}_2}\leq\gamma_{n^2}.
	$$
	If the $r\times r$ (or $(2r)\times(2r)$) linear system is solved through the QR factorization of $M_{ij}$, then 	$\varepsilon_{M_{ij}} \leq r\gamma_{r}$, as mentioned before (see \cite[Th. 19.10]{higham-accuracy}).
	
	This gives a backward error for each block-row of the matrix $M$ and of the 
	right-hand side $\mathcal{E}$ in Theorem~\ref{th:backstab}. Since these rows are
	never reused between equations, this defines a perturbation of 
	$M$ and $\mathcal{E}$ which ensures~\eqref{eq:backstab}.
\end{proof}

We note that Theorem \ref{th:backstab} corresponds to Step~\ref{it:alg-backsubstitution} in the procedure described at the beginning of Section~\ref{sec:algorithm} for solving a general system \eqref{eq:gensystem}. The remaining steps can be carried out also in a backward stable way, as we are going to explain. 

	Step~\ref{it:alg-reduction-periodic} involves no computations, just relabeling of the equations,
	transpositions and conjugations (which are exact in floating point arithmetic).

	Step~\ref{it:alg-reduction-triangular} is backward stable since the periodic QZ algorithm relies on
	unitary transformations and the following change of variables is unitary. 

In Step~\ref{it:alg-remaining}, the vectorization of~\eqref{eq:alfak}
	produces the linear system
	\[
		(B_k^{\top} \otimes A_k) \operatorname{vec}(X_{\alpha_k}^{s_k}) = \operatorname{vec}(E_k) + (D_k^\top \otimes C_k)\operatorname{vec}(X_{\beta_k}^{t_k}),
	\]
	which is again in the form treated in Lemma~\ref{lem:backstab}, so we only have to ensure that the
	method used to solve this linear system of the form $(B_k^{\top} \otimes A_k)\operatorname{vec}(X)=\operatorname{vec}(F)$
	is backward stable. To solve this system, we first compute $\tilde{Y} = \fl(A_k^{-1}F)$
	column by column, each time solving a linear system with $A_k$, and then similarly 
	$\tilde{X}=\fl(\tilde{Y}B_k^{-1})$, solving a linear system for each of its rows. 

	We assume that the linear systems with $A_k$ are solved with a backward stable method, i.e.,
	\[
	(A_k + \delta_j A_{k})\mathrm{Col}_j(\tilde{Y})=\mathrm{Col}_j(F), \quad \frac{\norm{\delta_j A_k}_2}{\norm{A_k}_2} \leq \varepsilon_{A_k},
	\]
	(note that there is a different perturbation $\delta_j A_k$ for each $j$); hence we have
	\[
		(\mathbb{A}+\delta\mathbb{A})\operatorname{vec}(\tilde{Y}) = \operatorname{vec}(F), \quad \frac{\norm{\delta\mathbb{A}}_2}{\norm{\mathbb{A}}_2} \leq \varepsilon_{A_k},
	\]
	where $\mathbb{A}=I_n \otimes A_k$ and $\delta \mathbb{A} = \operatorname{diag}(\delta_1 A_k,\dots,\delta_n A_k)$.

	An analogous argument shows that
	\[
		(\mathbb{B}+\delta\mathbb{B})\operatorname{vec}(\tilde{X}) = \operatorname{vec}(\tilde{Y}),\quad \frac{\norm{\delta\mathbb{B}}_2}{\norm{\mathbb{B}}_2} \leq \varepsilon_{B_k},
	\]
	where $\mathbb{B}=B_k^\top \otimes I_n$. Combining these two relations we have
	$
		\operatorname{vec}(F) = (\mathbb{A}+\delta\mathbb{A})(\mathbb{B}+\delta\mathbb{B})\operatorname{vec}(\tilde{X})
		= (\mathbb{A}\mathbb{B}+ \delta(\mathbb{AB}))\operatorname{vec}(\tilde{X}),
	$
	with $\delta(\mathbb{AB}) = \delta\mathbb{A}\cdot\mathbb{B}+\mathbb{A}\cdot\delta\mathbb{B} + \delta \mathbb{A}\cdot\delta\mathbb{B}$. 	
	Since $\norm{\mathbb{A}\mathbb{B}}_2 = \norm{\mathbb{A}}_2\norm{\mathbb{B}}_2$ for our choice of $\mathbb{A}$ and $\mathbb{B}$ (thanks to the properties of the Kronecker product \cite[p. 253]{hj-topics}), we get
	\begin{align*}
		\frac{\norm{\delta (\mathbb{A}\mathbb{B})}_2}{\norm{\mathbb{A}\mathbb{B}}_2} &=
		\frac{\norm{\delta\mathbb{A}\cdot\mathbb{B}+\mathbb{A}\cdot\delta\mathbb{B} + \delta \mathbb{A}\cdot\delta\mathbb{B}}_2}{\norm{\mathbb{A}}_2\norm{\mathbb{B}}_2} \leq 
		\frac{\norm{\delta\mathbb{A}}_2\norm{\mathbb{B}}_2+\norm{\mathbb{A}}_2\norm{\delta\mathbb{B}}_2 + \norm{\delta \mathbb{A}}_2\norm{\delta\mathbb{B}}_2}{\norm{\mathbb{A}}_2\norm{\mathbb{B}}_2}\\
		& \leq \varepsilon_{A_k} + \varepsilon_{B_k} + \varepsilon_{A_k}\varepsilon_{B_k}.
	\end{align*}

As a consequence of these arguments, the procedure described at the beginning of Section~\ref{sec:algorithm} produces a backward stable algorithm for solving general systems of the form \eqref{eq:gensystem}.

\subsection{Numerical experiments}
\label{sec:numerical-experiments}

We have implemented the proposed algorithm for the solution
in the case $\star = \top$. The case $\star=\ctop $ can be obtained
with minimal changes (from the algorithmic point of view), so
we decided to avoid running the same experiments concerning
stability and performance. We have run the tests on a server
with a Xeon X5680 CPU and 24 GB of memory. Our implementation
is available at \url{https://github.com/numpi/starsylv/}.
The code has been compiled with GNU Fortran compiler and
linked with the (single-threaded) BLAS reference
implementation (\texttt{libblas.so}, \url{http://www.netlib.org/blas/}).

We have computed the CPU time required by our implementation as a function
of the size of the matrices $n$ and of the number of equations
in the reduced system $r$, and we have compared it with the behavior predicted by our analysis. We have considered only systems with
triangular factors. The general case requires the reduction to triangular factors through the periodic Schur form as described in Section~\ref{sec:red2}, which 
has been already implemented in \cite[subroutines~\texttt{MB03BD} and~\texttt{MB03BZ}]{benner1997slicot} (see also~\cite{bojanczyk1992periodic,kressner01periodic}).

\begin{filecontents}{times.dat}
32  4.00000000000000008E-003
64  1.20009999999999978E-002
128  4.00029999999999969E-002
256  0.35202200000000000     
512   5.6963549999999996     
1024   55.811488000000004     
2048   527.76098300000001     
4096   5081.9176000000007     
8192   51071.783787000000
\end{filecontents}

\begin{filecontents}{timesR.dat}
32  4.00000000000000008E-003
64  8.00000000000000017E-003
128  1.20000000000000002E-002
256  2.00009999999999979E-002
512  4.80029999999999901E-002
1024  0.14400900000000000     
2048  0.37602300000000000     
4096  0.81605099999999997     
8192   1.6641040000000000     
16384   3.3642099999999999 
\end{filecontents}

\begin{figure}
	\centering
	\begin{tikzpicture}
		\begin{loglogaxis}[
			xlabel = $n$, ylabel = Time (s),
			legend pos = north west,
			width = .45\linewidth,
			height = .25\textheight
			]
			\addplot table {times.dat};
			\addplot[dashed,domain=32:8192] { x^3 * 1e-6 };
			\legend{Timing, $O(n^3)$};
		\end{loglogaxis}
	\end{tikzpicture}~~~~~~~~\begin{tikzpicture}
	\begin{loglogaxis}[
	xlabel = $r$, ylabel = Time (s),
	legend pos = north west,
	width = .45\linewidth,
	height = .25\textheight
	]
	\addplot table {timesR.dat};
	\addplot[dashed,domain=32:16384] { x * 5e-4 };
	\legend{Timing, $O(r)$};
	\end{loglogaxis}
	\end{tikzpicture}
	\caption{\footnotesize On the left, 
		the CPU time required by the algorithm described in
		Section~\ref{sec:backward-error} for the $\star = \top$
		case, as a function of $n$.
		The timings reported are for a system with $3$ equations,
		already in the required triangular form. The problems tested
                have sizes ranging from $n = 32$ to $n = 8192$. 
        On the right, the CPU time required by the algorithm described in
        Section~\ref{sec:backward-error} for the $\star = \top$
        case, as a function of $r$.
        The timings reported are for a system with $r$ equations
        and coefficients of size $16 \times 16$,
        already in the required triangular form. The problems tested
        have sizes ranging from $r = 32$ to $r = 16384$. 
                }
	\label{fig:complexity}
\end{figure}
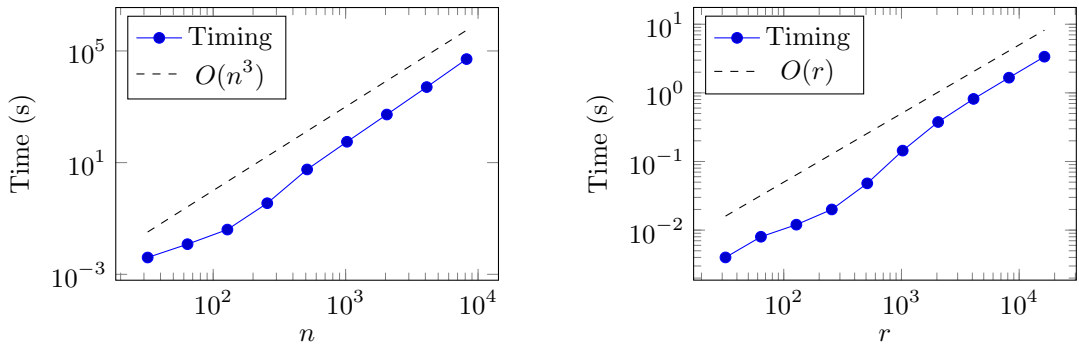

The results are reported in Figure~\ref{fig:complexity}, on the left, for the CPU
time required for the solution of a system of three equations with
coefficients of variable size $n$, and on the right for a system of $r$
equations of size $16$. Both plots confirm the cubic and linear
dependence of the CPU time on the parameters $n$ and $r$,
respectively, that we expect. The dashed lines in the two plots are
obtained plotting the functions $k_nn^3$ and $k_rr$ for two
appropriate constants $k_n$ and $k_r$.

Beside timings, we have also tested the accuracy of the
implementation.  For each value of $n$ and $r$ we have generated
several systems of $\top$-Sylvester equations (in the required
triangular form), and we have computed the residuals
$R_k := \norm{A_k X_k B_k - C_k X_{k+1} D_k - E_k}_F$ for
$k = 1, \ldots, r - 1$, and
$R_{r} := \norm{A_r X_r B_r - C_r X_{1}^\top D_r - E_r}_F$.  Then, the
$2$-norm of the residual of the linear system can be evaluated as
$R := \sqrt{R_1^2 + \cdots + R_r^2}$.
In Figure~\ref{fig:backwardError} we
have plotted an upper bound of the relative residuals $R / \norm{M}_{2}$,
obtained using the relation $n \sqrt{r} \norm{M}_2 \geq \norm{M}_F$,
where $M$ is the matrix of the ``large'' linear system, for different
values of $n$ and $r$ (recall that $M$ has size $n^2 r$).
Each value has been averaged over $100$
runs. The Frobenius norm of $M$ is easily computable recalling that, if two
matrices $M_1$ and $M_2$ do not have non-zero entries in corresponding
positions, then
$\norm{M_1 + M_2}_F^2 = \norm{M_1}_F^2 + \norm{M_2}_F^2$, and the
relation $\norm{A \otimes B}_F = \norm{A}_F \norm{B}_F$.

In these tests, the coefficients matrices $A_k, B_k, C_k, D_k$ have been
chosen with random entries with normal distribution, and with the correct triangular
structure. We have then shifted $A_k$ and $B_k$ with $\sqrt{n} I$ to
avoid finding solutions with very large norms. 

From the tests performed so far, the algorithm behaves in a backward
stable way, as predicted by our analysis. In fact, one can spot
that the error growth with respect to $n$ and $r$
is even less than the upper bound proved in this section. The error seems to grow slightly
less than $\sqrt{n}$, and to be independent of $r$. 
This behavior is often encountered in
dense linear algebra algorithms, since on average the errors do not
accumulate in the same direction (see e.g.~\cite[Section~4.5]{higham-accuracy}). 

\begin{filecontents}{backwardErrorN.dat}
5.0000e+01   3.1645e-16
1.0000e+02   4.1397e-16
2.0000e+02   5.5563e-16
4.0000e+02   7.5074e-16
8.0000e+02   1.0521e-15
1.6000e+03   1.4418e-15
\end{filecontents}

\begin{filecontents}{backwardErrorR.dat}
   5.0000e+01   1.9072e-16
1.0000e+02   1.9093e-16
2.0000e+02   1.8979e-16
4.0000e+02   1.9122e-16
8.0000e+02   1.9135e-16
1.6000e+03   1.9062e-16
\end{filecontents}

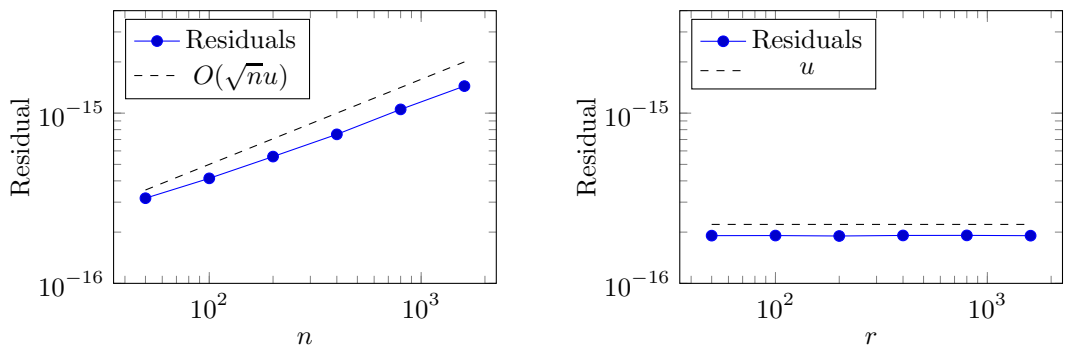
\begin{figure}
	\centering
	\begin{tikzpicture}
		\begin{loglogaxis}[
			xlabel = $n$, ylabel = Residual,
			legend pos = north west,
			width = .45\linewidth,
			height = .25\textheight,
			ymax=4e-15,
			ymin=1e-16
			]
			\addplot table {backwardErrorN.dat};
			\addplot[dashed,domain=50:1600] { sqrt(x) * 5e-17 };
			\legend{Residuals, $O(\sqrt nu)$};
		\end{loglogaxis}
		\end{tikzpicture}~~~~~~~~\begin{tikzpicture}
		\begin{loglogaxis}[
		xlabel = $r$, ylabel = Residual,
		legend pos = north west,
		width = .45\linewidth,
		height = .25\textheight,
		ymin = 1e-16,
		ymax = 4e-15
		]
		\addplot table {backwardErrorR.dat};
		\addplot[dashed,domain=50:1600] {2.22e-16 * 1 };
		\legend{Residuals, $u$};
		\end{loglogaxis}
	\end{tikzpicture}
	\caption{\footnotesize On the left, average residuals of $100$ systems of 
		$\top$-Sylvester equations solved via the algorithm
		described in Section~\ref{sec:algorithm}. The systems considered
		have $3$ equations with a variable coefficient size $n$. On the
	right, average residuals of $100$ systems of
	$\top$-Sylvester equations solved via the algorithm
	described in Section~\ref{sec:algorithm}. The systems considered
	have coefficients with size $ 8\times 8$, and $r$ equations.}
	\label{fig:backwardError}
\end{figure}



\section{Conclusions and future work}\label{sec:conc}

We have provided necessary and sufficient conditions for the nonsingularity of $r$ coupled generalized Sylvester and $\star$-Sylvester equations \eqref{eq:gensystem}, with square coefficients of the same size $n\times n$. We have shown that, in the nonsingular case, the problem can be reduced to periodic systems having at most one generalized $\star$-Sylvester equation. A characterization for the nonsingularity of periodic systems of just generalized Sylvester equations was obtained in an unpublished work by Byers and Rhee \cite{byers-rhee}. That characterization was given in terms of spectral properties of matrix pencils constructed from the coefficients of the system. We have provided an analogous characterization for the nonsingularity of periodic systems with exactly one generalized $\star$-Sylvester equation. We have also provided a characterization for both types of periodic systems (namely, the one with exactly one 
generalized $\star$-Sylvester equation and the one with only generalized Sylvester equations) in terms of spectral properties of formal products constructed from the coefficients of the system.  
Finally, we have presented an $O(n^3r)$ algorithm for computing the unique solution of a nonsingular system, which has been shown to be backward stable.

A future research line that naturally arises from this
work is to get a characterization of nonsingularity in the more
general setting of rectangular coefficients. Other possible generalizations, pointed out by the referees, include systems involving complex conjugation of the unknowns, like those considered in \cite{ref1a,ref1b}, or systems of periodic type \cite{ref3a,ref3b}.

\section*{Acknowledgments}
We wish to thank the anonymous referees for their comments that helped us to improve the presentation.

\bibliographystyle{abbrv}
\bibliography{biblio}
	
\end{document}